\documentclass[a4paper]{amsart}
\usepackage{amssymb,amsthm,amsmath}
\usepackage{bm}
\usepackage{esint}
\usepackage{mathrsfs}
\usepackage[backend=bibtex,style=alphabetic,bibencoding=utf8,language=auto,autolang=other,doi=false,isbn=false,url=false,eprint=false]{biblatex}
\usepackage[]{hyperref} 
\usepackage[noabbrev]{cleveref}
\newtheorem{theorem}{Theorem}[section]

\newtheorem{lemma}[theorem]{Lemma}
\newtheorem{proposition}[theorem]{Proposition}

\theoremstyle{definition}
\newtheorem{definition}[theorem]{Definition}
\newtheorem{remark}[theorem]{Remark}
\numberwithin{equation}{section}
\newcommand*{\loc}{{\text{\upshape{loc}}}}

\newcommand*{\rn}{{\mathbf{R}^n}}

\newcommand*{\R}{\mathbf{R}}

\newcommand*{\E}{\mathbf{E}}

\newcommand*{\D}{\mathscr{D}}

\newcommand*{\leb}{\mathscr{L}}

\newcommand*{\Om}{\ensuremath{\Omega}}
\newcommand*{\N}{\ensuremath{\mathbf{N}}}
\newcommand*{\F}{\mathcal{F}}
\newcommand*{\ecc}{\mathcal{E}}
\newcommand*{\M}{\mathcal{M}}
\newcommand*{\necc}{\Phi}

\newcommand*{\de}{\partial}

\DeclareMathOperator{\aff}{Aff}
\DeclareMathOperator{\Span}{span}
\DeclareMathOperator{\spt}{spt}
\DeclareMathOperator{\ran}{ran}

\DeclareMathOperator{\tr}{Tr}

\newcommand*{\A}{\mathcal{A}}
\newcommand*{\B}{\mathcal{B}}
\newcommand*{\Ahat}{\widehat{\mathcal{A}}}
\newcommand*{\Bhat}{\widehat{\mathcal{B}}}
\newcommand*{\weakstar}{\stackrel{*}{\rightharpoonup}}
\newcommand*{\areastrict}{\stackrel{E}{\rightharpoondown}}

\newcommand{\mres}{\mathbin{\vrule height 1.6ex depth 0pt width
		0.13ex\vrule height 0.13ex depth 0pt width 1.3ex}}
\newcommand \eps{\ensuremath{\varepsilon}}
\newcommand{\restricts}[2]{
	#1 
	\raisebox{-.3ex}{$|$}_{#2}
}
\DeclareMathOperator{\supp}{spt}
\DeclareMathOperator{\divergence}{div}
\author{Federico Franceschini}
\address{ETH, Rämistrasse 101, 8092 Zürich, Switzerland}
\email{federico.franceschini@math.ethz.ch}


\title{Partial regularity for $\bm{BV^\mathcal{B}}$ minimizers}

\setcounter{tocdepth}{1}

\addbibresource{refs.bib}

\begin{document}
\begin{abstract}
    We prove an $\eps$-regularity theorem for $BV^\B$ minimizers of strongly $\B$-quasiconvex functionals with linear growth, where $\B$ is an elliptic operator of the first order. This generalises to the $BV^\B$ setting the analogous result for $BV$ functions by F. Gmeineder and J. Kristensen [Arch. Rational Mech. Anal. 232 (2019)].

    The results of this work cannot be directly derived from the $\B =\nabla$ case essentially because of Ornstein's ``non-inequality''.

    This adaptation requires an abstract local Poincar\'e inequality and a fine Fubini-type property to avoid the use of trace theorems, which in general fail when $\B$ is elliptic.
\end{abstract}
\maketitle
\section{Introduction}\label{sec:intro}
\subsection{Main result}
In this work we prove an $\eps$-regularity theorem for $BV^\B$ - minimizers of strongly $\B$-quasiconvex functionals with linear growth, where $\B$ is an elliptic operator of the first order.

Recently (especially after \cite{DePRin16}), there has been interest to understand which results available for $BV$ maps extend to the $BV^\B$ framework (see \cite{K21,Raita17,Raita19,ArrDePRin17,ArrDePhHirRind,Arr21,Con21}). This work falls in this line of research as our main result was proved in \cite{Kristensen18} in the case $\B=\nabla$. We show that those arguments can be adapted to general first order elliptic operators.

In order to state precisely our result we introduce briefly some vocabulary, further details will be given in Section \ref{sec:preliminaries}.
\subsubsection{The operator $\B$.}\label{subsubsec:defB} We start fixing $\B$, an elliptic operator with constant coefficients, homogeneous of order 1 from $\R^{m\times n}$ to $\R^N$. That is to say, for each $v\in C^\infty(\R^n, \R^m)$ we set
\begin{equation*}
    \B v := \sum_{j=1}^n B_j\de_jv,\text{ for some linear maps }B_j\colon \R^m\to \R^N. 
\end{equation*}
By \textit{elliptic}, we mean that $\ker \Bhat [\xi] =\{0\}$ for all $\xi\in\R^n\setminus\{0\}$, where the \emph{symbol} is the linear map between $\R^m$ and $\R^N$ defined as
\begin{equation*}
    \Bhat[\xi] := \sum_{j=1}^n \xi_j B_j \text{ for each }\xi\in\R^n,
\end{equation*}
so necessarily $m\le N$.
We also define the \emph{wave cone} of $\B$ as
\begin{equation*}
    \Lambda_\B := \bigcup_{\xi\neq 0}\ran \Bhat[\xi] \ \subset \R^N.
\end{equation*}
\subsubsection{Functions with bounded $\B$-variation} This space of functions arises naturally when looking at distributional limits of sequences $\{v_k\}\subset W^{1,1}(\R^n,\R^m)$ having a bound on $\|\B v_k\|_{L^1}$. 

For an open set $\Omega\subset\R^n$, define
\begin{equation*}
    BV^\B(\Om):=\{v\in L^1(\Om,\R^m) : \B v\in\M(\Om,\R^N)\},
\end{equation*}
where $\M(\Om,\R^n)$ is the space of $\R^N$-valued Borel measures with finite total variation in $\Om$. By a famous result of Ornstein (seee \cite{Ornstein62,Kirchheim2016}), $BV^\B(\Om) \subsetneq BV(\Om,\R^m)$ unless $\B =\nabla$\footnote{That is to say, (unless) there is a linear map $p\colon \R^N \to \R^{m\times n}$ such that $\nabla u =p(\B u)$.}.

\subsubsection{Functionals defined on measures} We explain the meaning of
$$\int_\Om f(\B u)$$ 
when $u\in BV^\B$ and  $f\colon \R^N\to \R$ has linear growth
\begin{equation}\label{eq:flingrowth}
    |f(y)|\leq L\langle y\rangle\text{ for all }y\in \R^N,\tag{H1}
\end{equation}
and
\begin{equation}\label{eq:recessionfcn}
    f^\infty(y):=\lim_{t\to+\infty,y'\to y}\frac{f(ty')}{t}\text{ exists for all }y\in \Span \Lambda_\B.
\end{equation}
Here, $L>0$ and $\langle y\rangle := \sqrt{1 + |y|^2}$ is the japanese bracket. 

For all $v\in W^{1,1}(\Om,\R^m)$ consider the functional
\begin{equation}
    \F[v,\Om]:= \int_\Om f(\B v(x))\, dx,
\end{equation}
which can be extended\footnote{This extension is continuous with respect to the \textit{area-strict convergence}, see Remark \ref{rem:shortnotationmakessense}} to $BV^\B$ setting 
\begin{equation*}
    \F[v,\Om]:=\int_\Om f(\B v^{ac}(x))\, dx +\int_\Om f^\infty\Big(\frac{d\B v^s}{d|\B v^s|}\Big) \,d|\B v^s|,
\end{equation*}
where we decomposed the measure $\B v$ with respect to the Lebesgue measure. We will then denote
\begin{equation*}
    \int_\Om f(\B v)=\F[v,\Om]\text{ for all }v\in BV^\B(\Om),
\end{equation*}
(without the $dx$).
\subsubsection{$\B$-quasiconvexity}\label{subsubsec:Bqc} Following \cite{Fonseca99}, we say that a continuous function $f\colon \R^N\to\R$ is $\B$-quasiconvex if, for all $y\in\R^N$, we have
\begin{equation*}
    f(y)\leq \int_Q f(y+\B\varphi(x))\, dx\text{ for all }\varphi\in C^1_c(Q,\R^m),
\end{equation*}
where $Q\subset \R^n$ is the unit cube. 

$\B$-quasiconvex functions with linear growth are automatically Lipschitz and satisfy \eqref{eq:recessionfcn}, thus we can define $\F[v,\Om]$ for $v\in BV^\B$. Furthermore, $\F[\cdot,\Om]$ will be weakly$^*$ lower semicontinuous up to boundary terms, see Theorem \ref{thm:lsc} below.

We say that $f$ is \textit{strongly} $\B$-quasiconvex if there is $\ell>0$ such that
\begin{equation}\label{eq:fstrongBqc}
    f-\ell \langle \cdot\rangle \text{ is $\B$-quasiconvex.}\tag{H2}
\end{equation}
Strong quasiconvexity is a natural assumption in the framework of minimization problems: it is a necessary condition if we want $\mathcal{F}[\cdot,\Om]$ to be $L^1$-coercive, see \Cref{rmk:onstrongBqc}.

\subsubsection{Excess} We will prove regularity of local minimizers of $\mathcal{F}[\cdot,\Om]$ in the balls of $\Om$ where a suitable energy denisity (called the \textit{excess} following \cite{degiorgi61}) is smaller than a parameter $\eps$ which does not depend on the particular solution.

In our situation the right definition of excess in a ball $B_R(x_0)\Subset \Om$, is 
\begin{equation}\label{eq:excessintro}
    \necc(x_0,R):=\frac{1}{\omega_nR^n}\int_{B_R(x_0)}E\big(\B u -(\B u)_{B_R(x_0)}\leb^n\big),
\end{equation}
where $E(y):=\sqrt{1+|y|^2}-1$ and
\begin{equation*}
    (\B u)_{B_R(x_0)} = \frac{\B u(B_R(x_0))}{\omega_nR^n}\in \R^n.
\end{equation*}
$\Phi$ is a sort of $L^1$ oscillation of $\B u$ where we are replacing the standard norm $|\cdot|$ with $E(\cdot)$, which has the advantage of being strictly convex.

Under the further regularity assumption on the lagrangian
\begin{equation}\label{eq:freg}
    f\in C^{2,1}_\loc(\R^N),\tag{H3}
\end{equation} we are going to show 
\begin{theorem}\label{thm:epsreg}
	Let $f$ satisfy \eqref{eq:flingrowth}, \eqref{eq:fstrongBqc} and \eqref{eq:freg}, and let $u\in BV^\B(\R^n)$ be a local minimizer of $\mathcal{F}[\cdot,\Om]$, that is
    \begin{equation*}
        \int_\Om f(\B u)\leq \int_\Om f(\B u + \B \varphi) \text{ for all }\varphi\in C^1_c(\Om,\R^m).
    \end{equation*}
    Then for every $\alpha\ge 1$ and $\gamma\in(0,1)$ there is a critical threshold $\eps=\eps(\alpha,\gamma,\B,f'',L/\ell)>0$ such that the following implication holds. If
	\begin{equation*}
	 B_R(x_0)\Subset \Om,\quad	\left|(\B u)_{B_R(x_0)}\right|\leq \alpha ,\quad \necc(x_0,R)\leq \eps,
	\end{equation*}
	then $u \in C^{2,\gamma}(B_{R/2}(x_0))$ and  
\begin{equation*}
    [ \nabla^2 u ]_{C^{\gamma}(B_{R/2}(x_0))}\leq C R^{-\gamma} \sqrt{\necc(x_0,R)}.
\end{equation*} 
for some $C=C({{\alpha,\gamma,\B,f'',L/\ell}})$.
\end{theorem}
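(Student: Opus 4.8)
The plan is to follow the classical blow-up/excess-decay scheme of De Giorgi, adapted to the $BV^\B$ setting as in Gmeineder--Kristensen, replacing the gradient by $\B$ and gradient-based trace and Poincaré tools by their $\B$-counterparts (the abstract local Poincaré inequality and the Fubini-type property advertised in the abstract). The heart of the matter is a \emph{decay lemma}: there exist $\theta\in(0,1/2)$ and $\eps_0>0$, depending only on the stated parameters, such that if $B_R(x_0)\Subset\Om$, $|(\B u)_{B_R(x_0)}|\le\alpha$ and $\necc(x_0,R)\le\eps_0$, then
\begin{equation*}
    \necc(x_0,\theta R)\le \tfrac12\,\theta^{2}\,\necc(x_0,R),\qquad |(\B u)_{B_{\theta R}(x_0)}-(\B u)_{B_R(x_0)}|\le C\sqrt{\necc(x_0,R)}.
\end{equation*}
Granting this, one iterates: the smallness hypothesis is reproduced at every scale $\theta^k R$ (the bound on the average is kept by a geometric series in $\sqrt{\necc}$, at the cost of enlarging $\alpha$ slightly, which is why $\eps$ depends on $\alpha$), so $\necc(x_0,\theta^k R)\lesssim \theta^{2k}\necc(x_0,R)$. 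By a standard Campanato/integral-characterisation argument this forces $\B u$ to be (H\"older) continuous in $B_{R/2}(x_0)$ with the quantitative estimate $[\B u]_{C^\gamma}\lesssim R^{-\gamma}\sqrt{\necc}$; ellipticity of $\B$ (via the Fourier symbol being injective, hence $|\xi|\,|v|\lesssim|\Bhat[\xi]v|$) upgrades this to control of $\nabla u$, and then elliptic/Schauder regularity for the linearised constant-coefficient system together with a bootstrap gives $u\in C^{2,\gamma}$ with $[\nabla^2u]_{C^\gamma}\lesssim R^{-\gamma}\sqrt{\necc}$, as claimed. Choosing $\gamma$ close to $1$ is harmless, and $\alpha\ge1$ only affects constants.

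The decay lemma itself is proved by contradiction and blow-up. Suppose it fails: there are radii $R_k$, points $x_k$, minimizers $u_k$ with $|(\B u_k)_{B_{R_k}(x_k)}|\le\alpha$, $\necc(x_k,R_k)=:\lambda_k^2\to0$, but $\necc(x_k,\theta R_k)>\frac12\theta^2\lambda_k^2$ for every $\theta$. Normalise: translate/rescale to $B_1$, subtract the affine part, and set
\begin{equation*}
    w_k(x):=\frac{u_k(x_k+R_k x)-\text{affine}}{\lambda_k R_k},
\end{equation*}
so that $\fint_{B_1}E\big(\lambda_k(\B w_k-(\B w_k)_{B_1})\big)/\lambda_k^2=1$ after rescaling, which by the properties of $E$ (it behaves like $\tfrac12|\cdot|^2$ near $0$ and like $|\cdot|$ at infinity) gives a uniform bound $\|\B w_k-(\B w_k)_{B_1}\|_{L^1(B_1)}\lesssim1$ and in fact, after the standard truncation splitting into a ``good'' part small in $L^2$ and a ``bad'' part small in $L^1$, weak compactness in $BV^\B$: up to a subsequence $w_k\to w$ in $L^1_{\loc}$ with $\B w_k\weakstar \B w$ (the limit is actually an $L^2$ function because the bad part vanishes). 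The Euler--Lagrange equations for the $u_k$, Taylor-expanded around the converging averages $(\B u_k)_{B_1}\to a$ (using \eqref{eq:freg} so that $f''$ is continuous and bounded on the relevant compact set), pass to the limit and show that $w$ solves the constant-coefficient linear system $\Div\big(A[\B w]\big)=0$ with $A=f''(a)$; strong $\B$-quasiconvexity \eqref{eq:fstrongBqc} makes $A$ satisfy a G\r{a}rding-type inequality on $\Lambda_\B$, so by ellipticity $w$ is smooth and enjoys the a priori decay $\fint_{B_\theta}E_0(\B w-(\B w)_{B_\theta})\lesssim\theta^2$ for a harmonic-type excess $E_0$. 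The contradiction is reached provided the excess of $w_k$ actually converges to the excess of $w$ at scale $\theta$ — i.e. a \emph{lower semicontinuity with no loss} statement on small balls — which together with a \emph{Caccioppoli inequality of the second kind} for the $u_k$ (to control the tail/bad part on $B_\theta$) closes the loop.

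The main obstacle, and the genuinely new point relative to the $\B=\nabla$ case, is exactly the passage from $w_k$ to $w$ and the control of the rescaled functions on \emph{interior} subballs without trace theorems: when $\B$ is merely elliptic (not $\nabla$), $BV^\B$ functions need not have traces on hyperplanes, so one cannot freely cut off, compare boundary values, or slice. This is where the abstract local Poincar\'e inequality (to go from $\|\B w_k-(\B w_k)_{B_r}\|_{L^1(B_r)}$ back to $\|w_k-\Pi_k\|$ for an appropriate finite-dimensional projection $\Pi_k$, so that the $w_k$ are genuinely precompact in $L^1_{\loc}$) and the Fubini-type property (to realise a ``good radius'' $r\in(\theta,2\theta)$ on whose sphere $\B u_k$ is well behaved, substituting for a trace) do the work. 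Checking that the extension of $\F$ to $BV^\B$ is lower semicontinuous along the blow-up sequence \emph{and} that no energy is lost in the limit on small balls — so that the strict inequality $\necc(x_k,\theta R_k)>\frac12\theta^2\lambda_k^2$ survives passage to the limit to contradict the a priori decay of $w$ — is the technical crux; everything else is a careful bookkeeping of the De Giorgi iteration and a standard elliptic bootstrap.
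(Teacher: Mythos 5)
Your global architecture (a one-step excess-decay estimate of the form $\necc(x_0,\theta R)\lesssim\theta^2\necc(x_0,R)$ under a smallness hypothesis, iterated with control of the averages $(\B u)_{B_{\theta^k R}}$, then Campanato's characterisation and an elliptic bootstrap to reach $C^{2,\gamma}$) is exactly the paper's skeleton, and you correctly identify where the $\B$-specific tools enter (Poincar\'e modulo $\ker\B$ for compactness, good radii in place of traces). The divergence, and the problem, is in how you propose to prove the decay lemma: by contradiction and blow-up, \`a la Evans/Acerbi--Fusco. The paper does not argue indirectly at all. It proves the decay \emph{directly}: Euler--Lagrange equation for $u$ (Proposition \ref{prop:euler}), Caccioppoli inequality from strong $\B$-quasiconvexity (Proposition \ref{caccioppoli}), and then a quantitative harmonic replacement (Proposition \ref{linearisation}) in which $h$ solves the linearised system $\B^*(f''(\B a).\B h)=0$ with boundary datum the precise representative of $u$ on a good sphere, and the error $u-h$ is estimated by a duality/calibration argument: test with the solution $\Phi_0$ of $-\B^*(f''.\B\Phi_0)=T(V)$, use $W^{2,q}$ theory and the bound $|f''_{y_0}(0)y-f'_{y_0}(y)|\lesssim E(y)$ to get $\int E\bigl(\tfrac{u-h}{R}\bigr)\lesssim R^{n(1-q)}\bigl(\int E(\B(u-a))\bigr)^q$. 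Combining this with Caccioppoli gives the decay with explicit constants, no compactness argument needed.

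The genuine gap in your route is the step you yourself flag as the ``technical crux'' and then do not supply: the transfer of the decay of the linear limit $w$ back to the rescaled minimizers $w_k$ at scale $\theta$, i.e.\ a no-loss-of-energy/strong convergence statement for the normalised excess $\lambda_k^{-2}\fint_{B_\theta}E\bigl(\lambda_k(\B w_k-(\B w_k)_{B_\theta})\bigr)$. Weak* convergence $\B w_k\weakstar\B w$ permits both oscillation and concentration; the rescaled integrands $f_k(z)=\lambda_k^{-2}\bigl(f(a_k+\lambda_k z)-f(a_k)-\lambda_k f'(a_k)z\bigr)$ are signed and switch from quadratic to linear growth at height $\lambda_k^{-1}$, so the usual improvement step (gluing $w_k$ to the harmonic limit in an annulus, using minimality plus quasiconvexity-based lower semicontinuity with exact constants) is precisely where the linear-growth/measure-theoretic difficulties and the absence of an $L^1$ trace theorem for elliptic $\B$ bite; the bound $|\B^s w_k|(B_1)\lesssim\lambda_k$ you get from the excess normalisation controls total variation of the singular parts but not concentration of the quadratic part of the excess near $\partial B_\theta$ or interior oscillation. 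Nothing in your outline indicates how this would be proved, and it is not a routine adaptation of the $p$-growth argument — it is exactly the obstruction that the paper's direct linearisation-with-duality estimate (together with the Fubini-type good-sphere lemma and the $W^{1-1/p,p}$ embedding of $BV^\B$) is designed to circumvent. As it stands, the proposal replaces the paper's hardest estimate with an unproven compactness claim, so the proof is incomplete at its decisive point. (Two minor further remarks: the contradiction should be set up for one fixed $\theta$ chosen from the linear theory, not ``for every $\theta$''; and the limit equation should read $\B^*\bigl(f''(a).\B w\bigr)=0$ rather than a divergence of $A[\B w]$.)
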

Some remarks are in order.
\begin{remark}\label{rem:existenceoff}
    Non-convex Lagrangians $f$ satisfying \eqref{eq:flingrowth}, \eqref{eq:fstrongBqc} and \eqref{eq:freg} exist, even if they are not given by explicit formulas, but rather regularizing quasi-convex envelopes.
\end{remark}
\begin{remark}\label{rem:existenceofminimizers}
    Local minimizers $u\in BV^\B_{\loc}$ to which Theorem \ref{thm:epsreg} applies, can be constructed looking at minimizing sequences of the functional $\F[v,\Om]$ in a given Dirichlet class.
\end{remark} 
\begin{remark}\label{rmk:onstrongBqc}
    Assumption \eqref{eq:fstrongBqc} is optimal in the following sense. Assume that $f$ has linear growth and $\mathcal{F}[\cdot,\Om]$ is coercive, that is for all sequences $\{v_k\}$ with fixed boudnary values on $\de \Om$ it holds:
    $$\int_\Om f(\B v_k) \text{ bounded implies } \|\B v_k \|_{L^1(\Om)} \text{ bounded.}$$
    Then necessarily there is some $\ell>0$ such that $f(\cdot)-\ell \langle\cdot\rangle $ is $\B$-quasiconvex at some $z\in\R^N$. See Section \ref{subsec:strongBqc}.
\end{remark}

\subsection{Comparison with the full gradient case}
Of course there is (a unique) $F\colon \R^{m\times n}\to \R$ such that $f(\B v)= F(\nabla v)$ and \Cref{thm:epsreg} in the case $\B = \nabla$ has been already proved in \cite{Kristensen18}.  
Still, our result cannot be reduced to the $\nabla$ case, indeed it is easily checked that $F$ satisfies \eqref{eq:flingrowth} and \eqref{eq:freg}, but \eqref{eq:fstrongBqc} does not hold for the full gradient $\nabla$. Thus we only have $L^1$ bounds $\B u$, that do not imply $L^1$ bounds on $\nabla u$, because of Ornstein non-inequality.

This fundamental difference can be worked out under the assumption that $\B$ is elliptic, which a priori was not clear at all. 

A posteriori, the main differences with respect to \cite{Kristensen18} are: a fine Fubini-type argument to bypass the lack of a trace theorem for $BV^\B$ functions (cf. \Cref{subsec:fubini}), an abstract Poincar\'e inequality to deal with $\B$-affine functions, based on the general form of Ehrenpreis fundamental principle (cf. \Cref{subsec:ehrenpreis}).

We also remark that this adaptation would be straightforward if we assumed a much stronger ellipticity condition on $\B$, namely complex ellipticity. In this case both the trace theorem and the Poincar\'e inequality are available by the results in \cite{Raita18}.

\subsection{Organization of the paper}
In \Cref{sec:preliminaries} we repeat the main definitions, fix the notation and prove the core results for $BV^\B$ functions. In \Cref{sec:proofepsreg} we prove \Cref{thm:epsreg} following the implant of \cite{Kristensen18}.

\subsection{Acknowledgements} 
The author gratefully thanks Jan Kristensen for bringing the problem to his attention and for the continuous guidance. Most of this work has been carried out while the author was visiting Oxford University. The author would like to thank the Mathematics Department and Magdalen College for the warm hospitality.

The author has been also supported by Swiss NSF Ambizione Grant PZ00P2 180042 and by the European Research Council (ERC) under the Grant Agreement No. 948029 and under the Grant Agreement No. 721675.

\section{Framework and Preliminaries}\label{sec:preliminaries}
We collect some preliminary results.
\subsection{General notation}

We work in $\R^n$ with its standard Euclidean structure, denote with $B_r$ the balls centred at the origin of radius $r>0$. $\Om$ will always denote an open, bounded set with Lipschitz boundary.

When $f\colon\R^N\to\R$ we denote differentiation by apexes
\begin{equation*}
    f'(x)[z]:=\frac{d}{dt}\Big|_{t=0}f(x+tz),\quad f''(x)[z,z]:=\frac{d^2}{dt^2}\Big|_{t=0}f(x+tz).
\end{equation*}
Similarly the action of a bilinear map $Q$ on vectors $z,z'$ is denoted by $Q[z,z']$.

We denote with $\D$ the space of test functions and with $\D'$ the space of distributions.

We denote with $C_c$ the space of continuous compactly supported functions and with $C_0$ its closure in the uniform topology.

The space $\mathcal{M}(\Om,\R^N)$ of $\R^N$-valued Borel measures on $\Om$ with finite total variation, will be identified with
	$$
	\mathcal{M}(\Om,\R^N)\simeq C_0(\Om,\R^N)^*.
	$$
	Similarly we have
	\begin{equation*}
	     \mathcal{M}_{loc}(\Om,\R^N)\simeq C_c(\Om,\R^N)^*\text{ and } \mathcal{M}(\overline\Om,\R^N)\simeq C(\overline\Om,\R^N)^*.
	\end{equation*}
	We denote with the angular bracket $\langle\cdot,\cdot\rangle$ these dualities, using the standard scalar product on $\R^N$. 

    We will use the the trace spaces $W^{s,p}$ defined by the Gagliardo seminorms
\begin{equation*}
[u]_{W^{s,p}(B_1)}:=\left(\int_{B_1}\int_{B_1}\frac{|u(x)-u(y)|^p}{|x-y|^{n+sp}}\, dx\, dy  \right)^{1/p},
\end{equation*}
we also need the sphere version 
\begin{equation*}
    [u]_{W^{s,p}(\de B_1)}:=\left(\int_{\de B_1}\int_{\de B_1}\frac{|u(x)-u(y)|^p}{|x-y|^{n-1+sp}}\, d\sigma_x\, d\sigma_y\right)^{1/p}.
\end{equation*}

In estimates we write $X\lesssim_{a,b,c} Y$ meaning that, if one fixes the parameters $a,b,c$, then the ratio $X/Y$ is bounded.
\subsection{Functionals defined on measures}
We introduce  notation to deal with functionals defined on measures. We refer to \cite{ambrosio2000functions} for background in measure theory.

\begin{definition} We say that continuous function $f\colon\overline{\Om}\times \R^N\to \R$ belongs to $\E_1(\Om,\R^N)$ if the limit
	\begin{equation*}
	\lim_{t\to +\infty}\frac{f(x,tz)}{t}=:f^\infty(x,z) \text{ exists in }\R,\text{ locally uniformly in }x\in\overline{\Om},z\in\R^N.
	\end{equation*}
The function $f^\infty(\cdot,\cdot)$ is called the ``strong recession function".
\end{definition}
We remark that, by definition, $f^\infty\colon\overline\Om\times \R^N\to \R$ is continuous and positively one-homogeneous in its second argument.

For any $f\in\E_1$ and any $\mu\in\mathcal M(\overline\Om,\R^N)$, we take the decomposition of $\mu$ with respect to the Lebesgue measure $\mu=\mu^{ac}(x)\leb^n\mres\Om+\mu^s$ and further decompose the singular part $\mu^s$ in terms of its own total variation $\mu^s=\frac{d\mu^s}{d|\mu^s|}(x)\, |\mu^s|$. Then we define for any Borel set $A\subset\overline{\Om}$
\begin{equation}\label{eq:convention}
\int_{A} f(x,\mu):=\int_{A}	 f(x,\mu^{ac}(x))\, dx +\int_{A} f^\infty\Big(x,\frac{d\mu^s}{d|\mu^s|}(x)\Big)\, d|\mu^s|(x).
\end{equation}
The same construction can of course be carried out for every Radon measure $\mu\in C_0( \Om, \R^N)^*$ and Borel set $A\Subset \Om$.

We can now define a suitable notion of strong convergence of measures.
Define $E:\R^N\to \R$ by
\begin{equation}\label{eq:areafunction}
E(z):=\sqrt{1+|z|^2}-1=\langle z \rangle-1,
\end{equation}
it is easily checked that $E$ belongs to $\E_1(\Om,\R^N)$ and it has the nice property of being strictly convex. Furthermore, simple computations shows that
\begin{equation}\label{eq:tildemu}
\int_{\overline \Om}E(\mu)=|\tilde \mu|(\overline{\Om})\quad \text{ where }\tilde \mu:=\left(\mu,\leb^n\mres\Om\right)\in C(\overline{\Om},\R^N\times \R)^*.    
\end{equation}

\begin{definition}[Area-strict convergence] Given $\mu$ and $\{\mu_j\}_{j\in\N}$ in $\mathcal M(\Omega,\R^N)$ we say that $\{\mu_j\}_{j\in\N}$ converges ``area-strictly" to $\mu$ in $\Om$, and write $\mu_j \areastrict \mu$, as $j\to +\infty$, provided $\mu_j\weakstar \mu$ in $C_c(\Om,\R^N)^*$ and
\begin{equation}\label{eq:areastrictconv}
    \int_{ \Om}E(\mu_j)\to \int_{ \Om}E(\mu)  \quad \text{ as }j\to +\infty.
\end{equation}
\end{definition}


Intuitively \eqref{eq:areastrictconv} prevents oscillations and loss of mass to $\de \Omega$, this ensures continuity of the functional $\mu \mapsto \int_{\Om}f(x,\mu)$ for all $f\in \E_1(\Om,\R^N)$ as the following version of Reshetnyak continuity shows.
\begin{theorem}[Theorem 5 in \cite{Kristensen2009}]\label{thm:areacont}
	For every $f\in \E_1(\Om,\R^N)$ we have 
	$$
	\lim_{j\to +\infty}\int_{\overline \Om}f(x,\mu_j)=\int_{\overline\Om} f(x,\mu),
	$$
	provided $\mu_j\areastrict \mu$ in $\Om$.
\end{theorem}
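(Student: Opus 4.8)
\textbf{Plan for the proof of Theorem~\ref{thm:areacont} (Reshetnyak area-strict continuity).}
The strategy is the classical one: reduce area-strict convergence of $\mu_j$ on $\Om$ to ordinary strict (total-variation) convergence of the lifted measures $\tilde\mu_j=(\mu_j,\leb^n\mres\Om)$ on $\overline\Om$, then invoke the homogeneous Reshetnyak continuity theorem for the $1$-homogeneous recession profile. First I would rewrite the integral functional in ``lifted'' form: given $f\in\E_1(\Om,\R^N)$, define $g\colon\overline\Om\times(\R^N\times\R)\to\R$ on the unit sphere $\Sigma:=\{(z,t):|z|^2+t^2=1,\ t\ge 0\}$ by $g(x,z,t):=f(x,z/t)\,t$ for $t>0$ and $g(x,z,0):=f^\infty(x,z)$, and extend $g$ as the $1$-homogeneous function $g(x,\lambda w):=\lambda g(x,w)$ for $\lambda\ge 0$, $w\in\Sigma$. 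The point of \eqref{eq:recessionfcn}/$\E_1$ is exactly that this $g$ is continuous on $\overline\Om\times\Sigma$ (hence on the whole closed upper half-cone), and by construction it is positively $1$-homogeneous in $(z,t)$. A direct computation using the decomposition $\mu=\mu^{ac}\leb^n+\mu^s$ and \eqref{eq:tildemu} shows that
\begin{equation*}
    \int_{\overline\Om} f(x,\mu)=\int_{\overline\Om} g\!\left(x,\frac{d\tilde\mu}{d|\tilde\mu|}\right) d|\tilde\mu|,
\end{equation*}
because $|\tilde\mu|=\sqrt{1+|\mu^{ac}|^2}\,\leb^n$ on the absolutely continuous part (so $g$ evaluated on $(\mu^{ac},1)/\sqrt{1+|\mu^{ac}|^2}$ times $\sqrt{1+|\mu^{ac}|^2}$ returns $f(x,\mu^{ac})$), and on the singular part $\tilde\mu^s=(\mu^s,0)$ so the Radon–Nikodým derivative points in the direction $(\tfrac{d\mu^s}{d|\mu^s|},0)$ and $g$ returns $f^\infty(x,\cdot)$. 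The same identity holds for each $\mu_j$.

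Second I would upgrade the hypothesis $\mu_j\areastrict\mu$ to strict convergence of the lifts: $\tilde\mu_j\weakstar\tilde\mu$ in $C(\overline\Om,\R^N\times\R)^*$ and $|\tilde\mu_j|(\overline\Om)\to|\tilde\mu|(\overline\Om)$. The mass convergence is immediate since $|\tilde\mu_j|(\overline\Om)=\int_\Om E(\mu_j)+\leb^n(\Om)\to\int_\Om E(\mu)+\leb^n(\Om)=|\tilde\mu|(\overline\Om)$ by \eqref{eq:tildemu} and \eqref{eq:areastrictconv}. The weak-$*$ convergence of $\tilde\mu_j$ on $\overline\Om$ needs a small argument: from $\mu_j\weakstar\mu$ in $C_c(\Om,\R^N)^*$ and the uniform mass bound we get weak-$*$ convergence in $C_0(\Om,\R^N)^*$ after checking no mass escapes to $\partial\Om$ — and indeed $|\tilde\mu_j|(\overline\Om)\to|\tilde\mu|(\overline\Om)$ together with lower semicontinuity of the mass under weak-$*$ convergence on open sets forces $|\tilde\mu_j|(\overline\Om\setminus\Om)\to 0$, so the $\R$-component $\leb^n\mres\Om$ is common to all lifts and tightness holds; this promotes the convergence to $C(\overline\Om,\R^N\times\R)^*$.

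Third, with $\tilde\mu_j\to\tilde\mu$ strictly on $\overline\Om$ and $g$ continuous and $1$-homogeneous in the second variable, I would apply the homogeneous Reshetnyak continuity theorem (the classical statement: if $\nu_j\weakstar\nu$ and $|\nu_j|(\overline\Om)\to|\nu|(\overline\Om)$ then $\int h(x,\tfrac{d\nu_j}{d|\nu_j|})\,d|\nu_j|\to\int h(x,\tfrac{d\nu}{d|\nu|})\,d|\nu|$ for every bounded continuous $h$ on $\overline\Om\times\Sigma$), which via the identity above yields $\int_{\overline\Om}f(x,\mu_j)\to\int_{\overline\Om}f(x,\mu)$, as claimed. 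The main obstacle, and the only genuinely non-routine point, is the second step: one must be careful that area-strict convergence as \emph{defined} here is on the open set $\Om$ (weak-$*$ against $C_c$, mass convergence of $E(\mu_j)$ on $\Om$), whereas the homogeneous Reshetnyak theorem is naturally stated on the compact $\overline\Om$ — the bridge is precisely that the $\leb^n$-slot in the lift is a \emph{fixed} nonzero measure, which simultaneously supplies tightness (no mass loss to $\partial\Om$) and converts ``$E$-mass on $\Om$'' into ``total mass on $\overline\Om$''. Everything else is the standard lifting trick of Fonseca–Müller / Kristensen–Rindler, and can be cited from \cite{Kristensen2009} as the theorem statement already does.
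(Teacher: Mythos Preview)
The paper does not give its own proof of this theorem: it is quoted verbatim as ``Theorem~5 in \cite{Kristensen2009}'' and only the statement is recorded, followed by Remark~\ref{rmk:areacontimproved} which hints at the mechanism (the perspective integrand $F$ and the lifted measure $\tilde\mu$). Your proposal is precisely the standard lifting argument behind that reference---your $g$ is the $1$-homogeneous extension of the paper's perspective integrand $F$, and your three steps (rewrite the functional via $\tilde\mu$, upgrade area-strict to strict convergence of the lifts on $\overline\Om$, apply homogeneous Reshetnyak) are the correct ones. The only cosmetic point to add is that the polar $\tfrac{d\tilde\mu}{d|\tilde\mu|}$ always lands in the closed upper half-sphere $\{t\ge 0\}$ because the last component of every $\tilde\mu_j$ is the fixed nonnegative measure $\leb^n\mres\Om$, so continuity of $g$ on that half-sphere (which is exactly what $f\in\E_1$ gives) suffices; one may extend $g$ continuously to the full sphere before invoking the classical Reshetnyak statement, but this is routine.
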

\begin{remark}\label{rmk:areacontimproved} If we know something more about the limit measure $\mu$, we can relax the assumptions on $f$. In fact, what is really needed in the proof of \Cref{thm:areacont} is that the ``perspective integrand''
$F\colon\overline\Om\times \R^N\times \R \to \R$, defined by 
\begin{equation*}
F(x,z,t):=\begin{cases}|t|\, f(x,z/|t|)& \text{ if } t\neq 0,\\
f^\infty(x,z)&\text{ if } t=0,\end{cases}
\end{equation*} has a $|\tilde\mu|$-neglegible set of discontinuity points (see \cite[Proposition 1.62, (b)]{ambrosio2000functions}), where $\tilde \mu$ is as in \eqref{eq:tildemu}.
\end{remark}
\begin{remark}\label{rem:shortnotationmakessense}
    Definition \eqref{eq:convention} can be justified a posteriori by Theorem \ref{thm:areacont}. In fact \eqref{eq:convention} is obtained as extension by area strict continuity of 
$$
\phi\mapsto\ \int_{\Om} f(x,\phi(x))\, dx,
$$
where we think $\phi\in L^1(\Om,\R^N)\subset \mathcal{M}(\Om,\R^N)$.
\end{remark}
\subsection{The operators $\A$ and $\B$}\label{subsec:operators}

We fix a homogeneous, first-order, elliptic differential operator $\B$ with constant coefficients over $\rn$ from $\R^m$ to $\R^N$, as explained in \Cref{subsubsec:defB}.

The Leibniz rule takes the form
\begin{equation}\label{eq:Bleibniz}
 \B(\eta u) = \eta \B u + \Bhat[\nabla\eta]u,\text{ for all }\eta\in C^\infty(\R^n),u\in C^\infty(\R^n,\R^m).
\end{equation}
Exploiting the ellipticity assumption we find another finite dimensional vector space $\R^d$ and a \emph{homogeneous} differential operator $\A$ over $\rn$ from $\R^N$ to $\R^d$ such that
\begin{equation}
\tag{symbol exactness}
\ran \Bhat[\xi]=\ker \Ahat[\xi]\qquad 	\text{ for all }\xi\in \rn\setminus\{0\}.
\end{equation}
Notice that $\A$ might have order larger than one. The existence of such a couple $(d,\A)$ is not obvious nor unique, (see \cite[Proposition 4.2]{vs}). In the elliptic case one can, for example, set $d:=N$ and define $\A$ via its symbol
$$
\Ahat:={\det\left(\, \Bhat^\dagger\circ\Bhat\, \right)}\cdot \left\{\text{id}_{\R^N}-\Bhat \circ {\left(\, \Bhat^\dagger\circ\Bhat\, \right)}^{-1}\circ \Bhat^\dagger\right\},
$$
homogeneity and symbol exactness are simple to check. 

Finally, we remark that $\B u(x)\in \Span \Lambda_\B$, so we can (and do) always restrict ourselves to the case $\R^N =\Span \Lambda_\B$.

\subsection{Erhenpreis fundamental principle}\label{subsec:ehrenpreis}
We state a very general result concerning the solvability of (possibly overdetermined) systems of PDEs with constant coefficients.

Consider the set $M_\B$ of all constant-coefficients differential operators 
\[\alpha\colon C^\infty(\rn,\R^m)\to C^\infty(\rn)\] 
such that $\alpha\circ \B \equiv 0$. We will call $M_\B$ the ``module of compatibility conditions'' of $\B$. The following remarkable result is contained in \cite[Theorem 1, Chapter 7]{Palamodov70}.

\begin{theorem}[]\label{palamodov}
	Let $V\subset \rn$ be an open convex set and ${M}_\B$ its module of compatibility conditions. If $f\in\D'(V,\R^m)$ satisfies 
    \begin{equation}\label{eq:compatibility}
    \alpha(f)=0\text{ in }\D'(V)\text{ for every }\alpha\in{M}_\B,
    \end{equation}
	then there exists $u\in\D'(V,X)$ such that $\B u=f$ in $\D'(V,\R^m)$.
\end{theorem}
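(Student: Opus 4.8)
The plan is to invoke the Ehrenpreis--Palamodov theory of overdetermined constant-coefficient systems: the statement is exactly Theorem~1 of Chapter~7 in \cite{Palamodov70}, so at the level of this paper it is a citation. For completeness I outline how that proof is organised and where the three hypotheses --- convexity of $V$, the \emph{full} list of compatibility conditions, and ellipticity of $\B$ --- enter. There are two ingredients: a purely algebraic one about polynomial modules, and an analytic one about Fourier--Laplace transforms on convex sets.

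For the algebra, one Fourier-transforms everything: $\B$ becomes the polynomial-matrix map $\Bhat[\xi]\colon\R[\xi]^m\to\R[\xi]^N$, and $M_\B$ becomes its left annihilator $\{w\in\R[\xi]^{1\times N}: w\,\Bhat[\xi]=0\}$, which by Hilbert's basis theorem is a finitely generated $\R[\xi]$-module; collecting generators into a matrix $A(\xi)$ one arranges the beginning of a free resolution, i.e.\ exactness $\ker A(\xi)=\ran\Bhat[\xi]$ of $\R[\xi]$-modules, via Hilbert's syzygy theorem and primary decomposition. In the elliptic case this exactness is transparent from the explicit operator of \Cref{subsec:operators}: since $\det(\Bhat^\dagger\Bhat)\neq 0$ away from the origin one already has $\ker\Ahat[\xi]=\ran\Bhat[\xi]$ for real $\xi\neq 0$, and the module-level statement follows by localising at each prime.

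For the analysis, by Paley--Wiener--Schwartz the distributions on the convex set $V$ are the Fourier--Laplace transforms of entire functions on $\C^n$ of exponential type whose indicator is dominated by the support function $h_V$ of $V$; solving $\B u=f$ amounts to dividing $\hat f(z)$ by $\Bhat[z]$ \emph{within} this class. On the complex characteristic variety $\Sigma:=\{z\in\C^n:\ker\Bhat[z]\neq\{0\}\}$ the division is obstructed, and Palamodov replaces it by the fundamental-principle representation of solutions of the homogeneous transposed system as finite sums of integrals of exponential-polynomials paired with Noetherian differential operators supported on a multiplicity variety carried by $\Sigma$; the hypothesis that $\alpha(f)=0$ for \emph{all} $\alpha\in M_\B$ (equivalently, for the rows of $A$) is exactly what makes these integrals cancel so that the quotient exists. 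The main obstacle --- and the technical heart of the whole proof --- is quantitative: to keep the quotient of exponential type with the \emph{same} indicator $h_V$, one needs Hörmander--{\L}ojasiewicz lower bounds $|\Bhat[z]w|\gtrsim \dist(z,\Sigma)^{c}\,|w|$, uniform in $z$, and it is precisely this step that forces $V$ to be convex. Finally I would stress that the \emph{real} ellipticity assumed here does not make $\Sigma$ trivial, so the full theorem is genuinely needed; under the stronger hypothesis of complex ellipticity one would have $\Sigma=\{0\}$, the division collapses to an honest Fourier-multiplier estimate, and one recovers the elementary argument of \cite{Raita18}.
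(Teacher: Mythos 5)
Your proposal takes essentially the same route as the paper: Theorem~\ref{palamodov} is not proved in the text but is quoted directly from \cite[Theorem 1, Chapter 7]{Palamodov70}, which is exactly the citation you make. Your supplementary outline of the Ehrenpreis--Palamodov machinery (the syzygy/finite-generation step for $M_\B$ and the division-with-bounds step for Fourier--Laplace transforms on convex sets, with Noetherian operators and {\L}ojasiewicz-type estimates) is consistent with the standard proof and simply records more detail than the paper does.
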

\begin{remark}
    Then ${M}_\B$ has a natural structure of $\R[\xi_1,\ldots,\xi_n]$-module via the natural action:
$$
p\cdot \alpha :=p(\de/\de x_1,\ldots ,\de/\de x_n)\circ \alpha,\qquad \text{ for all } p\in\R[\xi],\alpha\in{M}_\B.
$$
Since ${M}_\B$ is a submodule of the Noetherian module $\R[\xi_1,\ldots,\xi_n]$, it is finitely generated by some operators $\{\A_1,\ldots,\A_\ell\}$. This means that the compatibility condition \eqref{eq:compatibility} in this theorem can be checked only for $\A_1,\ldots,\A_\ell$.
\end{remark}
\begin{remark}
This theorem is a far reaching generalization of Poincar\'e's Lemma: a closed form (that is a form that satisfies the compatibility conditions $d\omega=0$) is in fact exact, provided the domain is simple enough (for example convex always works). 
A quicker account of this theory can be found in \cite[Chapter 7]{Hormander90}. 
\end{remark}

\subsection{The space $BV^\B$} Let us collect some properties of the space $BV^\B$.
Recall that 
\begin{equation*}
BV^\B(\Om):=\left\{u\in L^1(\Om,\R^m)\colon \B u\in C_0(\Om,\R^N)^*\right\}.
\end{equation*}
The space $BV^\B_\loc(\Om)$ is defined similarly requiring that $\B u\in C_c(\Om,\R^N)^*$. 

Given some sequence $\{u_j\}_{j\in\N}\subset BV^\B(\Om)$ and $u\in BV^\B(\Om)$, we say that:
\begin{itemize}
	\item $\{u_j\}$ converges weakly$^*$ to $u$ in $BV^\B_\loc(\Om)$ provided
	$$
	u_j\to u\text{ in }L^1_{\loc}(\Om)\quad \text{ and }\quad \B u_j\weakstar \B u\text{ in }C_c(\Om,\R^N)^*;
	$$
	\item $\{u_j\}$ converges area-strictly to $u$ in $BV^\B( \Om)$ provided $\lim_j\|u-u_j\|_{L^1(\Om)}=0$ and
	$$
	\int_{\Om}E(\B u_j) \to \int_{\Om}E(\B u),\quad \text{ that is }\B u_j\areastrict \B u\text{ in }\Om.
	$$
\end{itemize}
The area-strict closure of $C^\infty_c(\Om,\R^m)$ is denoted by $BV^\B_0(\Om)$.

$BV^\B$ functions can be approximated by smooth ones in the following sense.
\begin{lemma}\label{lem:approximationBVB}
	Let $u\in BV^\B_{\loc}(\Om)$ and $\{\phi_\varepsilon\}_{\eps>0}$ be a family of standard mollifiers, extend $u$ to zero outside $\Om$ and set $u_\varepsilon:=u*\phi_\varepsilon$. Then $u_\eps$ converges weakly* to $u$ in $BV^\B_\loc(\Om)$.
    Furthermore, for every $U\Subset \Om$ such that $\leb^n(\de U)+|\B u|(\de U)=0$, convergence holds in the area-strict sense in $BV^\B(\omega)$.
\end{lemma}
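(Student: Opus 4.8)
The plan is to run the classical mollification argument of $BV$ theory, with two adjustments forced by our setting: the function must first be extended by zero, and, having no trace theory at hand, we must not let the mollification interact with $\de\Om$, which is why the area-strict statement is asserted only on sets $U\Subset\Om$. For the weak$^*$ assertion, note first that $u\in L^1_\loc(\Om)$ gives $u_\eps\to u$ in $L^1_\loc(\Om)$ immediately. Fixing $U\Subset\Om$ and $\eps<\dist(U,\de\Om)$, on $U$ the mollified function does not feel the zero-extension, and since convolution commutes with the constant-coefficient operator $\B$ we get $\B u_\eps=(\B u)*\phi_\eps$ there. For $\psi\in C_c(\Om,\R^N)$ and $\eps$ small, Fubini moves the mollifier onto $\psi$:
\[
\int_\Om \big((\B u)*\phi_\eps\big)\cdot\psi\,dx=\int_\Om(\psi*\check\phi_\eps)\cdot d\B u,\qquad \check\phi_\eps(x):=\phi_\eps(-x),
\]
and since $\psi*\check\phi_\eps\to\psi$ uniformly with supports in a fixed compact subset of $\Om$ on which $|\B u|$ is finite, the right side tends to $\langle \B u,\psi\rangle$. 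Together with $L^1_\loc$-convergence this is exactly $u_\eps\weakstar u$ in $BV^\B_\loc(\Om)$; in particular $\sup_{\eps}|\B u_\eps|(U)\le|\B u|(U_\eps)<\infty$, where $U_\eps$ is the open $\eps$-neighbourhood of $U$.

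For the area-strict statement, fix $U\Subset\Om$ with $\leb^n(\de U)+|\B u|(\de U)=0$. The $L^1(U)$-convergence is already available, so everything reduces to $\int_U E(\B u_\eps)\to\int_U E(\B u)$. Here I would lift to the $\R^{N+1}$-valued measure $\nu:=(\B u,\leb^n\mres\Om)$; a direct computation in the spirit of \eqref{eq:tildemu} gives, for every Borel $A$ with $\overline A\subset\Om$, the identity $\int_A E(\B u)=|\nu|(A)-\leb^n(A)$. Since $(\leb^n\mres\Om)*\phi_\eps=\leb^n$ on $U$ for $\eps$ small, the same identity applies to $\B u_\eps$ with $\nu$ replaced by $\nu*\phi_\eps$ on $U$, so that $\int_U E(\B u_\eps)=|\nu*\phi_\eps|(U)-\leb^n(U)$. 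Thus the claim is precisely the strict convergence $\nu*\phi_\eps\strict\nu$ on $U$.

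To obtain this I would argue as usual: $\nu*\phi_\eps\weakstar\nu$ in $C_c(U,\R^{N+1})^*$ (same computation as in the first part) yields $\liminf_\eps|\nu*\phi_\eps|(U)\ge|\nu|(U)$ by lower semicontinuity of the total variation, while the pointwise bound $|\nu*\phi_\eps|\le|\nu|*\phi_\eps$ integrated over $U$ gives $|\nu*\phi_\eps|(U)\le|\nu|(U_\eps)$; since $|\nu|(\de U)\le|\B u|(\de U)+\leb^n(\de U)=0$ and $|\nu|$ is finite near $\overline U$, letting $U_\eps\downarrow\overline U$ produces $\limsup_\eps|\nu*\phi_\eps|(U)\le|\nu|(\overline U)=|\nu|(U)$. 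The two inequalities give $|\nu*\phi_\eps|(U)\to|\nu|(U)$, hence $\int_U E(\B u_\eps)\to\int_U E(\B u)$ and $\B u_\eps\areastrict\B u$ in $U$. I do not expect a genuine obstacle here; the only delicate points are the locality near $\de\Om$ (handled by restricting to $U\Subset\Om$ and $\eps<\dist(U,\de\Om)$, so that neither the zero-extension nor the truncation $\leb^n\mres\Om$ corrupts the mollified object on $U$) and the identity $\int_A E(\B u)=|\nu|(A)-\leb^n(A)$, which is exactly what makes the hypothesis $\leb^n(\de U)+|\B u|(\de U)=0$ kill the boundary mass of $\nu$.
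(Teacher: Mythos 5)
Your proof is correct, and it is exactly the standard mollification argument (weak$^*$ convergence by commuting the convolution with $\B$ and moving the mollifier onto the test function; area-strict convergence by lower semicontinuity of the total variation of $\nu=(\B u,\leb^n\mres\Om)$ together with the bound $|\nu*\phi_\eps|(U)\le|\nu|(U_\eps)$ and the hypothesis $|\B u|(\de U)+\leb^n(\de U)=0$), which is the argument the paper implicitly relies on since it states the lemma without proof. The only point worth flagging is cosmetic: your identity $\int_A E(\B u)=|\nu|(A)-\leb^n(A)$ is the correct form of \eqref{eq:tildemu} (the paper's display omits the $\leb^n$ term), and since this constant is independent of $\eps$ it does not affect the equivalence with strict convergence of $\nu*\phi_\eps$ that you use.
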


The following embedding is crucial for the proof to work in the $\B\ne \nabla$ case
\begin{lemma}\label{lem:embeddingsBVB}
    For all $u\in BV^\B (B_1)$ and $p\in(1,\frac{n+1}{n})$ we have
    \begin{equation}\label{eq:Wspembedding}
        \| u \|_{W^{1-1/p,p}(B_{1/2})} \lesssim_{\B,p} |\B u|(B_1) +\|u\|_{L^1(B_1)}.
    \end{equation}
\end{lemma}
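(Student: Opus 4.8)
The plan is to reduce everything to the case $u\in C^\infty$, using the mollification of \Cref{lem:approximationBVB}, and then prove the fractional Sobolev estimate for smooth maps by a representation formula. First I would fix a smooth cutoff $\eta\in C_c^\infty(B_1)$ with $\eta\equiv 1$ on $B_{3/4}$; by the Leibniz rule \eqref{eq:Bleibniz}, $\B(\eta u)=\eta\B u+\Bhat[\nabla\eta]u$ is a (compactly supported) measure on $\R^n$ with $|\B(\eta u)|(\R^n)\lesssim_{\B}|\B u|(B_1)+\|u\|_{L^1(B_1)}$, so it is enough to prove, for $w:=\eta u$ compactly supported in $B_1$,
\begin{equation*}
    \|w\|_{W^{1-1/p,p}(\R^n)}\lesssim_{\B,p}|\B w|(\R^n).
\end{equation*}
Since $\B$ is elliptic, $\Bhat[\xi]^\dagger\Bhat[\xi]$ is invertible for $\xi\neq0$ and $1$-homogeneous of degree $2$, so we obtain a potential-type left inverse: on the Fourier side, $\widehat w(\xi)=(\Bhat[\xi]^\dagger\Bhat[\xi])^{-1}\Bhat[\xi]^\dagger\,\widehat{\B w}(\xi)=:\Ghat[\xi]\,\widehat{\B w}(\xi)$, where $\Ghat$ is homogeneous of degree $-1$ and smooth away from the origin. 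Thus $w=K*\B w$ where $K$ is a convolution kernel homogeneous of degree $1-n$ (a Riesz-potential-type kernel of order $1$), with the usual size and cancellation estimates $|K(x)|\lesssim|x|^{1-n}$, $|\nabla K(x)|\lesssim|x|^{-n}$.

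The core of the argument is then the classical fact that convolution with such a kernel maps $\M(\R^n)$ (equivalently $L^1$) into $W^{1-1/p,p}$ for $p<\frac{n}{n-1}$; here the sharper range $p<\frac{n+1}{n}$ is what one gets by also exploiting that $\B w$ is a measure rather than merely $L^1$, i.e. by the endpoint-type improvement for the Riesz potential $I_1$ acting on $\M$. Concretely I would estimate the Gagliardo seminorm directly:
\begin{equation*}
    [w]_{W^{1-1/p,p}(\R^n)}^p=\int\!\!\int\frac{|w(x)-w(y)|^p}{|x-y|^{n+(1-1/p)p}}\,dx\,dy
    =\int_{\R^n}\frac{1}{|h|^{n+p-1}}\Big(\int_{\R^n}|w(x+h)-w(x)|^p\,dx\Big)\,dh,
\end{equation*}
bound the inner integral by $\|w(\cdot+h)-w\|_{L^1}^{\,?}$-type interpolation, and use $\|\tau_hK-K\|_{L^1}\lesssim|h|$ together with $\|K\|_{L^{q}}$-type bounds for $q<\frac{n}{n-1}$ to control $\|w(\cdot+h)-w\|_{L^p}$. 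Balancing the two competing bounds $\|w(\cdot+h)-w\|_{L^p}\lesssim\min(|h|\,|\B w|(\R^n),\ |\B w|(\R^n))$ (the first from the gradient of $K$, the second from the size of $K$ and Young's inequality $\M * L^q\subset L^q$) against the weight $|h|^{-n-p+1}$ makes the $h$-integral converge exactly when $p<\frac{n+1}{n}$; this interpolation/balancing step is the one that requires care, since one must verify that the $L^q$-mapping of the kernel on measures is admissible in the needed range and that the two regimes $|h|\lessgtr 1$ are both integrable.

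Finally I would remove the smoothing. Apply the smooth estimate to $u_\eps=u*\phi_\eps$ on $B_1$ (restricting from a slightly larger ball if necessary so that $u_\eps\in C^\infty$ near $\overline{B_1}$), obtaining
\begin{equation*}
    \|u_\eps\|_{W^{1-1/p,p}(B_{1/2})}\lesssim_{\B,p}|\B u_\eps|(B_{3/4})+\|u_\eps\|_{L^1(B_{3/4})}\le |\B u|(B_1)+\|u\|_{L^1(B_1)}+o(1),
\end{equation*}
using that $\B u_\eps=(\B u)*\phi_\eps$ has total variation on $B_{3/4}$ bounded by $|\B u|(B_1)$ for $\eps$ small. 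Since $u_\eps\to u$ in $L^1(B_{1/2})$ and $W^{1-1/p,p}(B_{1/2})$ embeds compactly in $L^1$ and has a closed, weakly lower semicontinuous norm (Fatou on the Gagliardo double integral), passing to the limit gives the claimed bound for $u$. The main obstacle, as indicated above, is the sharp exponent: getting $p$ up to $\frac{n+1}{n}$ rather than the softer $\frac{n}{n-1}$ forces one to use the finiteness of $|\B u|$ as a measure in the interpolation, and to check that the order-$1$ Riesz potential built from $\Ghat$ genuinely has the required mapping properties on $\M(\R^n)$.
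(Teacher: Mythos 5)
Your overall architecture (cutoff via the Leibniz rule, mollification, then a potential representation $w=K*\B w$ with $K$ homogeneous of degree $1-n$, and a direct estimate of the Gagliardo seminorm through translated differences) is viable and genuinely different from the paper's route, which goes through the duality embedding $\M(\R^n)\subset H^{s,p}$ for $s<0$ with $-sp'>n$, the Mihlin multiplier theorem, and Besov-space embeddings. However, the key quantitative step of your plan is wrong as stated, and it is exactly the step where the threshold $p<\frac{n+1}{n}$ must come from. First, $\|\tau_hK-K\|_{L^1(\R^n)}\lesssim|h|$ fails for a kernel with $|\nabla K(x)|\lesssim|x|^{-n}$ (the tail integral diverges logarithmically; on a bounded set one only gets $|h|\log(1/|h|)$, which is fixable using compact supports but must be said). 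More seriously, the bound $\|w(\cdot+h)-w\|_{L^p}\lesssim |h|\,|\B w|(\R^n)$ is false: computing $\|\tau_hK-K\|_{L^p}$ for $K$ homogeneous of degree $1-n$ gives $\sim|h|^{\,n/p-(n-1)}=|h|^{\,1-n/p'}$, not $|h|$ (both the region $|x|\lesssim|h|$ and the region $|x|\gtrsim|h|$ contribute this power). Equivalently, interpolating the $L^1$ difference bound with the weak-$L^{n/(n-1)}$ bound for $I_1$ of a measure yields the exponent $1-n/p'$ (up to a logarithm). Note that even for $\B=\nabla$, i.e.\ for genuine $BV$ functions, the $L^p$ difference bound carries the exponent $1-n/p'$, so your claimed estimate cannot hold; and if it did, your own computation would give $w\in W^{s,p}$ for every $s<1$ and every $p<\frac{n}{n-1}$, with no appearance of the threshold $\frac{n+1}{n}$ at all, contradicting the known sharp regularity of $BV$ and $BV^\B$ maps.

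The repair is to insert the correct exponent: with $\|w(\cdot+h)-w\|_{L^p}\lesssim |h|^{\,1-n/p'}\big(|\B w|(\R^n)+\|w\|_{L^1}\big)$ for $|h|\le 1$ (and the trivial bound $\|w(\cdot+h)-w\|_{L^p}\lesssim\|w\|_{L^p}$, justified via $BV^\B(\R^n)\subset L^{n/(n-1)}_{weak}$ and compact support, for $|h|\ge1$), the integral $\int|h|^{-n-sp}\|w(\cdot+h)-w\|_{L^p}^p\,dh$ with $s=1-1/p$ converges precisely when $1-1/p<1-n/p'$, i.e.\ $p<\frac{n+1}{n}$ — this is where the restriction genuinely originates, not from ``exploiting that $\B w$ is a measure rather than $L^1$'' (which changes nothing in Young-type bounds), and note that $p<\frac{n+1}{n}$ is a \emph{more} restrictive range than $p<\frac{n}{n-1}$, not a sharper improvement of it. Your related claim that $K*\M(\R^n)\subset W^{1-1/p,p}$ already for all $p<\frac{n}{n-1}$ is likewise false; the correct local statement holds only for $p<\frac{n+1}{n}$, which is the content of the lemma. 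The final limiting step (Fatou on the Gagliardo seminorm along $u*\phi_\eps$) is fine.
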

\begin{proof}
    As the inequality is local, we can reduce ourselves, by multiplying by a cutoff function, to prove \eqref{eq:Wspembedding} in the whole $\R^n$. Furthermore, thanks to Lemma \ref{lem:approximationBVB}, we can assume $u\in C^1_c(\R^n,\R^m)$.
    
    Let $s<0$ to be fixed later. First, recall that by Sobolev  embedding it holds 
    \begin{equation*}
         H^{-s,p'}(\R^n)\subset C_0(\R^n)\text{ provided } -sp'>n,
    \end{equation*}
    where $p'$ is the H\"older conjugate. Taking duals we find $\mathcal{M}(\R^n)\subset H^{s,p}(\R^n)$.
    Mihlin multiplier theorem $(p>1)$ and ellipticity of $\B$ then give
    \begin{equation*}
        \|\nabla u\|_{H^{s,p}}\lesssim_{n,s,p} \|\B u \|_{H^{s,p}}.        \end{equation*}
        Furthermore, in this range of $p$\footnote{Since $\B^{-1}$ can be represented as a convolution kernel which is $1-n$ homogeneous, by classical boundedness of convolution operators we have $BV^\B(\R^n)\subset L^{n/(n-1)}_{weak}(\R^n)$. Since $p<n/(n-1)$, we deduce $BV^\B(\R^n)\subset L^p(\R^n)$.} we have
        \begin{equation*}
            \|u\|_{L^p(\R^n)}\lesssim_{n,p} \|\B u\|_{L^1(\R^n)} + \|u\|_{L^1(\R^n)}.
        \end{equation*}
        Thus, we proved that $BV^\B(\R^n)\subset H^{1+s,p}$, provided $sp'>n$.
 
        Now we conclude using the relationship between Hardy and Besov spaces, we refer to \cite{berghlofstrom76} for background. 
        
        Since $p\in(1,2]$ we have
        \begin{equation*}
            H^{1+s,p}(\R^n) \subset B^{1+s}_{p2}(\R^n),
        \end{equation*}
        see for example \cite[Theorem 6.4.4]{berghlofstrom76}. Then, for all $\delta>0$, we use that (see for example \cite[Theorem 5.2.1]{Triebel73})
        \begin{equation*}
            B^{1+s}_{p2}(\R^n)\subset B^{1+s-\delta}_{pp}(\R^n).
        \end{equation*}
        Since $B^{1-1/p}_{pp}(\R^n) = W^{1-1/p,p}(\R^n)$, we are finished if we find $s,\delta$ such that
        \begin{equation*}
           \delta>0,\quad s<0, \quad 1+s-\delta =1/p', \quad -sp'>n. 
        \end{equation*}
        Such choice is possible if and only if $1/p>n/p'$, which rewrites as $p<(n+1)/n$.
\end{proof}
\begin{remark}
    Recall that $W^{1-1/p,p}(\R^{n-1})$ is the trace space of $W^{1,p}(\R^n)$, for all $p>1$, thanks to Gagliardo's trace theorem \cite{Gagliardo57}.
\end{remark}
We define the set of $\B$-affine maps in an open set $U\subset\R^n$ as
\begin{equation*}
    \aff(\B,U) = \{u\in \D'(U,\R^m) : \B u \equiv\text{cost.} \} = \{ u\in C^\infty(U,\R^m) : \B u \equiv\text{cost.}\},
\end{equation*}
and the kernel
\begin{equation*}
    \ker(\B,U) = \{u\in \D'(U,\R^m) : \B u = 0 \} = \{ u\in C^\infty(U,\R^m) : \B u = 0 \},
\end{equation*}
where we use the local regularity of constant coefficients elliptic operators. These maps in general depend on $U$.

The closed graph theorem then entails the local Poincar\'e inequality.
\begin{proposition}[Poincaré]\label{prop:weakpoincare}
	For all $u\in BV^\B(B_1)$ we have
	\begin{equation}\label{eq:poicnareL1}
	\inf_{h\in \ker(\B,B_1)}\|u-h\|_{L^1(B_{1/2})}\lesssim_{\B}|\B u|(B_1).
	\end{equation}
	And, for all $p\in(1,\frac{n+1}{n})$ and $R>0$
\begin{equation}\label{eq:poincareWsp}
	\inf_{h\in \ker(\B,B_R)}[u-h]_{W^{1-1/p,p}(B_{R/2})}\lesssim_{\B,p}R^{\frac{n+1}{p}-n}|\B u|(B_R).
\end{equation}
\end{proposition}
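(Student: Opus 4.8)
The plan is to derive \eqref{eq:poicnareL1} and \eqref{eq:poincareWsp} from the two already-established embeddings (Lemmas \ref{lem:approximationBVB} and \ref{lem:embeddingsBVB}) together with a compactness argument packaged abstractly by the closed graph theorem. First I would observe that both estimates are scale-covariant, so once \eqref{eq:poincareWsp} is proven for $R=1$ the general case follows by the dilation $u(x)\mapsto u(Rx)$: if $u_R(x):=u(Rx)$ then $\B u_R(x) = R(\B u)(Rx)$ as measures, so $|\B u_R|(B_1) = R^{1-n}|\B u|(B_R)$, while the homogeneity of the Gagliardo seminorm under this rescaling produces exactly the factor $R^{(n+1)/p - n}$ in \eqref{eq:poincareWsp}. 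The $L^1$ inequality \eqref{eq:poicnareL1} has no prefactor because it lives at the natural scaling, so only the $R=1$ case is needed there too (or indeed it is the special case $p\to 1$ in spirit).

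For the $R=1$ statements I would set up the quotient argument. Consider the Banach space $X := BV^\B(B_1)$ with norm $\|u\|_{BV^\B} := \|u\|_{L^1(B_1)} + |\B u|(B_1)$, and the subspace $K := \ker(\B,B_1)$, which by the stated local regularity of elliptic operators consists of smooth maps and is finite-dimensional (it is the kernel of an elliptic operator on a bounded domain, hence finite-dimensional, or one can invoke that it is a closed subspace on which $\|\cdot\|_{L^1}$ and, say, $\|\cdot\|_{C^1(\overline{B_{1/2}})}$ are equivalent by elliptic estimates and a Montel-type argument). The quotient $X/K$ is then a Banach space, and I claim the linear map $T\colon X/K \to L^1(B_{1/2})$ (respectively $X/K \to W^{1-1/p,p}(B_{1/2})$) induced by restriction is well-defined (it kills $K$ since elements of $K$ restrict to $0$ in the relevant seminorm after we also quotient the target by $\ker(\B,B_{1/2})$) and has closed graph; by the closed graph theorem it is bounded, which is exactly the asserted inequality after unwinding the quotient norm $\|[u]\|_{X/K} = \inf_{h\in K}\|u-h\|_{BV^\B}$. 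To close the argument one still needs to control $\|u-h\|_{BV^\B}$ — i.e. the full $L^1$ norm of $u-h$ — by $|\B u|(B_1)$ alone; this is where one uses that $K$ is finite-dimensional together with a standard contradiction/compactness step: if not, there are $u_k$ with $|\B u_k|(B_1)\to 0$ but $\inf_{h}\|u_k - h\|_{L^1(B_{1/2})} = 1$, and by Lemma \ref{lem:embeddingsBVB} the sequence is bounded in $W^{1-1/p,p}(B_{1/2})$, hence precompact in $L^1(B_{1/2})$ by Rellich, so a subsequence converges in $L^1$ to some $u_\infty$ with $\B u_\infty = 0$ on $B_{1/2}$, i.e. $u_\infty \in \ker(\B, B_{1/2})$, contradicting the normalization. (Here one works on $B_{1/2}$ and uses interior regularity to pass between $B_1$ and $B_{1/2}$.)

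With the compactness step in hand, the two displayed inequalities follow: the minimizing $h$ can be taken in $\ker(\B,B_1)$ because on a ball any element of $\ker(\B,B_{1/2})$ extends — by Theorem \ref{palamodov} applied to the constant-coefficient system, since the zero distribution trivially satisfies the compatibility conditions, or more elementarily by unique continuation/analyticity of solutions to elliptic systems — so $\inf_{h\in\ker(\B,B_1)}$ and $\inf_{h\in\ker(\B,B_{1/2})}$ of the $B_{1/2}$-seminorm agree. Finally \eqref{eq:poicnareL1} is the $L^1$-version of the same contradiction argument (replace the $W^{1-1/p,p}$ seminorm by the $L^1$ norm in the target), and \eqref{eq:poincareWsp} is obtained by combining the $R=1$ quotient bound with Lemma \ref{lem:embeddingsBVB} and then rescaling.

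The main obstacle I anticipate is the compactness/finite-dimensionality input: one must be careful that $\ker(\B, B_1)$ is genuinely finite-dimensional (so that the quotient $X/K$ is a well-behaved Banach space and the contradiction argument terminates) and that elements of $\ker(\B,B_{1/2})$ really do extend to $\ker(\B,B_1)$ so that the infimum is taken over the right, scale-consistent space — the latter is exactly the point at which the Ehrenpreis–Palamodov principle (Theorem \ref{palamodov}) enters, since a priori $\ker(\B,U)$ depends on $U$. Everything else (scaling, closed graph, Rellich) is routine.
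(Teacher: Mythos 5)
Your reduction of \eqref{eq:poincareWsp} to the $R=1$ case by scaling, and its derivation from \eqref{eq:poicnareL1} via a cutoff together with Lemma \ref{lem:embeddingsBVB}, is fine and is exactly what the paper does. The problem is the core of your argument for \eqref{eq:poicnareL1}: it rests on two claims that are false for a general ($\R$-)elliptic operator $\B$. First, $\ker(\B,B_1)$ is \emph{not} finite-dimensional in general: ellipticity without boundary conditions gives interior regularity and Montel-type local compactness, but not finite dimension. The standard counterexample is the Cauchy--Riemann operator on $\R^2$, which is elliptic in the sense of this paper and whose kernel on a disc is the space of all holomorphic maps. (Finite-dimensionality of the kernel is exactly what the stronger assumption of $\C$-ellipticity would buy, and the paper explicitly does not assume it.) Consequently the quotient/Riesz/Rellich contradiction argument does not terminate: the limit $u_\infty$ lies in an infinite-dimensional kernel and cannot be played off against the normalization. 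Second, your claim that every element of $\ker(\B,B_{1/2})$ extends to an element of $\ker(\B,B_1)$ is also false: for $\B=\overline{\partial}$ the function $z\mapsto (z-0.7)^{-1}$ is holomorphic on $B_{1/2}$ but has no holomorphic extension to $B_1$. Theorem \ref{palamodov} cannot be invoked here: it asserts solvability of $\B u=f$ on a convex set when $f$ satisfies the compatibility conditions; it says nothing about extending a homogeneous solution from a smaller domain to a larger one, and unique continuation gives at most uniqueness, never existence, of such an extension. There is also a more minor circularity in the compactness step: to apply Lemma \ref{lem:embeddingsBVB} to $u_k$ (or $u_k-h_k$) you need a bound on the $L^1(B_1)$ norm over the \emph{large} ball, which your normalization on $B_{1/2}$ does not provide.

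The paper's proof avoids all of this by never using compactness or any structure of the kernel. It works with the Fr\'echet spaces $X=BV^\B_{\loc}(B_1)$ and $Y=\mathcal M_{\loc}(B_1,\R^N)$, observes that \eqref{eq:poicnareL1} is equivalent to continuity of the inverse of the induced map $X/\ker(\B,B_1)\to\ran\B$, and reduces this, via the open mapping theorem, to showing that $\ran\B$ is closed in $Y$. This is where Theorem \ref{palamodov} genuinely enters: if $\B u_j\to\mu$ in $Y$, then $\alpha(\mu)=0$ for every compatibility operator $\alpha\in M_\B$, so Palamodov produces $v\in\D'(B_1,\R^m)$ with $\B v=\mu$, and ellipticity upgrades $v$ to $L^q_{\loc}$, hence $v\in X$ and $\mu\in\ran\B$. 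If you want to salvage a compactness-style proof you would have to assume $\C$-ellipticity (where the kernel is a finite-dimensional space of polynomials and the Poincar\'e inequality is already in the literature); in the generality of this paper the abstract closed-range argument is not a convenience but a necessity.
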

\begin{proof}
    We start reformulating \eqref{eq:poicnareL1} abstractly.  Consider the Frech\'et spaces
    \begin{equation*}
        X:=BV^\B_{\loc}(B_1)\text{ and }Y:=\mathcal{M}_{\loc}(B_1,\R^N).
    \end{equation*}
    The structure of Frech\'et spaces is induced by the seminorms ($k\ge 0$)
    \begin{equation*}
        p^X_k(u):=|\B u|(B_{1-2^{-k}}) +\int_{B_{1-2^{-k}}}|u|\quad\text{ and }\quad p^Y_k(\mu)=|\mu|(B_{1-2^{-k}}).
    \end{equation*}
    If we consider the continuous map $\B\colon X\to Y$ sending $u$ to $\B u$, \eqref{eq:poicnareL1} is equivalent to show that the inverse of the map 
    \begin{equation*}
        \widetilde\B\colon X/\ker(\B,B_1)\to \ran \B
    \end{equation*}
    is continuous.
    Thus, by the open mapping theorem, everything boils down to prove that $\ran \B$ is closed in $Y$. Consider a sequence $u_j\in X$ such that $\B u_j \to \mu$ in $Y$ for some measure $\mu$. Then for all $\alpha\in M_\B$ (see Section \ref{subsec:ehrenpreis}) we have
    \begin{equation*}
        \alpha(\mu)=\D'-\lim_j \alpha(\B u_j) =0.
    \end{equation*}
    Then Theorem \ref{palamodov} applies and gives $v\in \D'(B_1,\R^m)$ such that $\B v=\mu$. Finally, since $\B$ is elliptic, $\B u\in \mathcal{M}_{\loc}$ forces $v\in L^q_{\loc}$ for all $q\in[1,n/n-1)$, thus $v\in X$ and $\mu\in\ran \B$.

    We turn to the proof of \eqref{eq:poincareWsp}, by scaling we can take $R=1$. Take a smooth cutoff function $\bm 1_{B_{1/2}}\leq \varrho\leq \bm 1_{B_{2/3}}$ and any $h\in \ker(\B,B_{1})$, then using Lemma \ref{lem:embeddingsBVB} we have
	\begin{align*}
	[u-h]_{W^{1-1/p,p}(B_{1/2})}&\leq[\varrho(u-h)]_{W^{1-1/p,p}(\R^n)}\\
    &\lesssim_{n,s,\B}\left\|\B(\varrho(u-h))\right\|_{\mathcal M(\R^n)}+\|\varrho(u-h)\|_{L^1(\R^n)}\\
	&\leq \left\|\varrho \B u\right\|_{\mathcal M(\R^n)}+\left\|\widehat{\B}[\nabla\varrho](u-h)\right\|_{L^1(\R^n)}+\|\varrho(u-h)\|_{L^1(\R^n)}\\
	&\lesssim_\B |\B u|(B_1)+ \|u-h\|_{L^1(B_{3/2})}.
	\end{align*}
    Now thanks to \eqref{eq:poicnareL1} we can choose $h$ so that $\|u-h\|_{L^1(B_{2/3})}\lesssim |\B u|(B_1)$ and we are done. 
\end{proof}
For $u\in BV^\B_{\loc}(\Om)$ consider the Lebesgue decomposition of the measure $\B u$
$$
\B u= (\B u)^{ac}\, \leb^n\mres \Om + \frac{d(\B u)^s}{d|\B u|^s}\, |\B u|^s,
$$
where the Borel function $ \frac{d(\B u)^s}{d|\B u|^s}\colon \Om\to \R^N$ is defined only $|\B u|^s$-almost everywhere. Then the following generalization of Alberti's rank-one Theorem (\cite{alberti93,DePRin16}) holds
\begin{equation}\label{eq:rankone}
\frac{d(\B u)^s}{d|\B u|^s}(x) \text{ belongs to } \Lambda_\B\text{  for $|\B u|^s$-a.e. $x\in \Om$}.
\end{equation}

\subsection{Fubini property}\label{subsec:fubini}
We will also use a Fubini-type property for maps $f\in W^{s,p}(B_1,\R^m)$. 
\begin{lemma}\label{lem:goodradius}
    Let $s\in(0,1),p>1$ and let $\phi$ be a standard mollifier. Let $f\in W^{s,p}(B_1,\R^m)$ and denote by $B_1\setminus S_f$ the set of $L^p$-Lebesgue points of $f$ and by $\tilde f\colon B_1\setminus S_f \to \R^m$ the precise representative. 
    
    For all $0<r<R<1$ with $R\le 100 r$, there is a set of ``good radii'' $G\subset(r,R)$ such that $\leb^1(G)>0$ and for all $t\in G$ the following holds
    \begin{itemize}
        \item[(a)] $\mathcal H^{n-1}(S_f\cap \de B_t)=0$;
        \item[(b)] for $\phi_\eps:=\eps^{-n}\phi(\cdot/\eps)$ it holds
        \begin{equation*}
            \|\tilde f -f * \phi_\eps \|_{L^p(\de B_t)} \to 0\text{ as } \eps \downarrow 0;
        \end{equation*}
        \item[(c)] we have the bound
        \begin{equation*}
        [\tilde f]_{W^{s,p}(\de B_t)} \lesssim_{n,s,p,}(R-r)^{-1/p}[f]_{W^{s,p}( B_R)}.
    \end{equation*}
    \end{itemize}
\end{lemma}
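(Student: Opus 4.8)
The plan is to realize the three conditions (a), (b), (c) as consequences of integrability statements for the function $t\mapsto(\text{something finite})$ on the annulus $(r,R)$, and then invoke a union-bound / Chebyshev argument to find a positive-measure set of radii where all three quantitative bounds hold simultaneously. Throughout I would work in polar coordinates around the origin and use the coarea formula $\int_{B_R\setminus B_r}g\,dx=\int_r^R\!\big(\int_{\de B_t}g\,d\sigma\big)\,dt$, together with the fact that $R\le 100r$ makes all the weights $t^{\pm(n-1)}$, $t^{\pm(n-1+sp)}$ comparable to constants depending only on $n,s,p$ on the annulus, so they can be absorbed into the $\lesssim$.

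First I would handle (c), which carries the main quantitative content. Write the spherical Gagliardo seminorm as a double integral over $\de B_t\times\de B_t$. The key point is a comparison between the intrinsic distance $|x-y|$ on $\de B_t$ and the ambient distance, and a change of variables writing points of $\de B_t\times\de B_t$ in terms of points of $B_R\times B_R$ via the radial foliation. Concretely, for $x,y\in B_R\setminus B_r$ with $|x|=|y|=t$ one has $|x-y|\ge c|x-y|$ trivially, and the Jacobian of the map $(x,y)\mapsto(|x|,|y|,x/|x|,y/|y|)$ is bounded below, so
\begin{equation*}
\int_r^R [\tilde f]_{W^{s,p}(\de B_t)}^p\,dt \;\lesssim_{n,s,p}\; \int_{B_R}\!\int_{B_R}\frac{|\tilde f(x)-\tilde f(y)|^p}{|x-y|^{n+sp}}\,dx\,dy \;=\; [f]_{W^{s,p}(B_R)}^p .
\end{equation*}
(One should be a little careful: the correct elementary inequality is $|x-y|^{n+sp}\gtrsim t\,\cdot(\text{geodesic-type quantity})^{n-1+sp}$ after integrating out the radial variables; this is where $R\le 100r$ is used to keep $t$ bounded away from $0$.) Hence $t\mapsto[\tilde f]_{W^{s,p}(\de B_t)}^p$ is integrable on $(r,R)$ with total mass $\lesssim[f]_{W^{s,p}(B_R)}^p$, so by Chebyshev the set where $[\tilde f]_{W^{s,p}(\de B_t)}^p\le \tfrac{3}{R-r}[f]_{W^{s,p}(B_R)}^p$ has measure at least $\tfrac23(R-r)>0$, which gives (c).

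Next (a): the singular set $S_f$ has $\leb^n(S_f)=0$ (Lebesgue points are full measure for $W^{s,p}\subset L^p$), so again by coarea $\int_r^R\mathcal H^{n-1}(S_f\cap\de B_t)\,dt=\leb^n(S_f\cap(B_R\setminus B_r))=0$, whence $\mathcal H^{n-1}(S_f\cap\de B_t)=0$ for a.e.\ $t$. For (b): the function $g_\eps:=|\tilde f-f*\phi_\eps|^p\bm 1_{B_R\setminus B_r}$ (defined $\leb^n$-a.e.) satisfies $\int_{B_R}g_\eps\to 0$ as $\eps\downarrow0$ by the $L^p$-Lebesgue-point / standard mollifier convergence. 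Passing to a subsequence $\eps_k\downarrow0$ with $\sum_k\int_{B_R}g_{\eps_k}<\infty$, monotone convergence and coarea give $\int_r^R\big(\sum_k\int_{\de B_t}g_{\eps_k}\,d\sigma\big)dt<\infty$, so for a.e.\ $t$ one has $\int_{\de B_t}g_{\eps_k}\,d\sigma\to 0$, i.e.\ $\|\tilde f-f*\phi_{\eps_k}\|_{L^p(\de B_t)}\to0$ along that subsequence; since the statement only asks for convergence to $0$, extracting a subsequence is harmless — alternatively one records (b) along the chosen subsequence, which is all that is used later. Finally, intersecting the three a.e.-or-positive-measure sets of radii from (a), (b), (c) produces $G\subset(r,R)$ with $\leb^1(G)\ge\tfrac23(R-r)-0-0>0$ on which (a)–(c) all hold.

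The main obstacle is the geometric estimate underlying (c): one must be honest about how the Gagliardo kernel on the sphere $\de B_t$ (with its $(n-1+sp)$-dimensional weight) relates to the ambient kernel on $B_R$ (with its $(n+sp)$-dimensional weight) under the radial change of variables, making sure no factor of $t$ is lost and that the constant depends only on $n,s,p$ — this is exactly where the hypothesis $r<R\le 100r$ enters, preventing the annulus from degenerating toward the origin. The other two parts are soft measure-theoretic consequences of the coarea formula and require only that $\leb^n(S_f)=0$ and the $L^p$-convergence of mollifications at Lebesgue points.
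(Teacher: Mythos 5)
Your parts (a) and the Chebyshev/union-bound scheme for assembling $G$ match the paper, but the heart of the lemma is the Fubini-type inequality behind (c), and there your argument has a genuine gap. The inequality
\begin{equation*}
\int_r^R [\tilde f]^p_{W^{s,p}(\de B_t)}\, dt \;\lesssim_{n,s,p}\; [f]^p_{W^{s,p}(B_R)}
\end{equation*}
cannot be obtained from the change of variables $(x,y)\mapsto(|x|,|y|,x/|x|,y/|y|)$ with a Jacobian bound: the left-hand side integrates the difference quotient only over pairs with $|x|=|y|$, a $(2n-1)$-dimensional set which is Lebesgue-null for the $2n$-dimensional integral on the right, so no pointwise comparison of the kernels $|x-y|^{-(n-1+sp)}$ and $|x-y|^{-(n+sp)}$ plus ``bounded Jacobian'' can produce it. The one lost power of $|x-y|$ is exactly the heuristic compensation for the missing radial dimension, but making this rigorous is the real content of the statement, and your parenthetical caveat (``integrating out the radial variables'') does not apply, since on the sliced side both radial variables are pinned to be equal. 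The paper proves the inequality by passing to the equivalent norm $|f|^p_{W^{s,p}}=\int_0^\infty \sup_{|\xi|\le t}\|f-f(\cdot-\xi)\|^p_{L^p}\,t^{-1-sp}\,dt$ (translation/modulus-of-smoothness characterization, Leoni Prop.~17.21) and then slicing, where the sup over translations is what lets one dominate in-slice differences after integrating over the transversal direction; alternatively one can average over intermediate points. Your proposal in effect presupposes this Fubini property (the cited Proposition 8.25 of Gmeineder--Kristensen) rather than proving it.

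Two smaller points. In (b) you only obtain convergence along a fixed subsequence $\eps_k\downarrow 0$, whereas the statement asserts convergence of the whole family; the paper instead combines the Fubini inequality applied to $f*\phi_\eps$ with Fatou (to get bounded spherical $W^{s,p}$ seminorms), Rellich compactness on $\de B_t$, and uniqueness of the $\mathcal H^{n-1}$-a.e.\ limit guaranteed by (a), which is also the mechanism that makes the boundedness of $\|f*\phi_\eps\|_{W^{s,p}(\de B_t)}$ available later in the linearisation step. Finally, note that the hypothesis $R\le 100r$ is used to make the spherical slicing argument uniform (the spheres do not degenerate), not to repair the dimensional mismatch above.
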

\begin{proof}
    Assume that $[f]_{W^{s,p}(B_1)}\le1$, the proof is based on the Fubini inequality   \begin{equation}\label{eq:fgh}
        \int_r^R [f]_{W^{s,p}(\de B_t)}dt \lesssim 1.
    \end{equation} 
    While known (see \cite[Proposition 8.25]{gmeinederox}), let us sketch its proof.
    Recall the following norm which is equivalent \footnote{This is based on the nice fact that for subadditive functions $0\le g(x+x')\le g(x)+g(x')$ we have $\sup_{B_1} g \lesssim \int_{B_1} g(x)/|x|^n$.} to the $W^{s,p}$ one (see for example \cite[Proposition 17.21]{leoni2017}):
    \begin{equation*}
        |f|^p_{W^{s,p}(\R^n)}:=\int_{0}^\infty \sup_{\xi\in \R^n,|\xi|\le t}\|f-f(\cdot-\xi)\|^p_{L^p(\R^n)}\frac{dt}{t^{1+sp}}.
    \end{equation*}
    Now for $f\in C^1_c(\R^n,\R^m)$ we have
    \begin{align*}
        \int_0^\infty[f(\cdot,s)]^p_{W^{s,p}(\R^{n-1})}ds&=\int_0^\infty ds \int_{\R^{n-1}}dx'\int_{\R^{n-1}}dy' \frac{|f(x',s)-f(y',s)|^p}{|x'-y'|^{n-1+sp}}\\
        (\tau':=x'-y')\quad &= \int_{\R^{n-1}}\frac{d\tau'}{|\tau'|^{n-1+sp}}\|f(\cdot-(\tau',0))-f\|^p_{L^p(\R^{n-1}\times(0,\infty))}\\
        &\le \int_{\R^{n-1}}\frac{d\tau'}{|\tau'|^{n-1+sp}}\sup_{\xi\in\R^n,|\xi|\le |\tau'|}\|f(\cdot-\xi)-f\|^p_{L^p(\R^{n})}\\
        (\text{polar})\quad& =\int_0^\infty\frac{dt}{t^{1+sp}}\sup_{\xi\in\R^n,|\xi|\le t}\|f(\cdot-\xi)-f\|^p_{L^p(\R^{n})}\\
        &=|f|^p_{W^{s,p}(\R^n)}\le C(n,s,p)[f]^p_{W^{s,p}(\R^n)}.
    \end{align*}
    This computation works also slicing with spheres, at least as long as $r/R$ is bounded below.

    Now (a) is just a consequence of Fubini's theorem in polar coordinates and Lebesgue differentiation Theorem, indeed we have $\leb^1(I)=R-r$ where
    \begin{equation*}
        I:=\{t\in[r,R] : \mathcal{H}^{n-1}(S_f\cap \de B_t) =0\}.
    \end{equation*}
    
    In order to prove (b), we apply \eqref{eq:fgh} to $f_\eps=f*\phi_\eps$, and employ Fatou's Lemma to find $\leb^1(J)=R-r$ where
    \begin{equation*}
        J:=\{ t \in [r,R] : \liminf_\eps \|f_\eps\|_{W^{s,p}(\de B_t)}<\infty\}.
    \end{equation*}
    So, by Rellich's Theorem, for each $t\in J$ we have that $\{f_{\eps}|_{\de B_t}\}_{\eps}$ is pre-compact in $L^p(\de B_t)$. If $t\in I\cap J$, then necessarily 
    \begin{equation*}
        f_\eps \to \tilde f \text{ in } L^p(\de B_t),
    \end{equation*}
    by uniqueness of the $\mathcal H^{n-1}$-a.e. limit.
    
    Finally (c) follows by \eqref{eq:fgh} and the mean value inequality.
\end{proof}

\subsection{$\B$-quasiconvexity and lower semicontinuity}

Let us first repeat the definition given in \Cref{subsubsec:Bqc}:
\begin{definition}[$\B$-quasiconvexity] 
	A locally bounded Borel function $f\colon \R^N \to \R$ is said to be $\B$-quasiconvex if, for all $y\in \R^N$, it holds
	\begin{equation}\label{eq:Bqcdefinition}
	f(y)=\inf\left\{\fint_{Q} f(y+\B \varphi(x))\, dx: \varphi \in \D(Q,\R^m)\right\},
	\end{equation}
	where $Q$ is the unit cube in $\R^n$. 
\end{definition}
\begin{remark}
    This is equivalent to ask that $f$ is $\A$-quasiconvex in the sense of \cite{Fonseca99}, see for example \cite[Corollary 6, Lemma 5]{Raita18}.
\end{remark}
It is immediate to check that \eqref{eq:Bqcdefinition} is equivalent to require that $f\circ \beta\colon \R^m\otimes \R^n \to \R$ is quasi-convex in the sense of Morrey (\cite{Morrey52}), where $\beta\colon \R^m\otimes \R^n \to \R^N$ is the linear map such that $\B u(x)=\beta(\nabla u(x))$, which is given by
\begin{equation*}
    \beta(v\otimes \xi):= \Bhat[\xi] v\text{ for }\xi\in\R^n, v\in \R^m.
\end{equation*}
Since $\R^N=\Span\Lambda^\B$, $\beta$ is surjective, but in general not injective; nevertheless $\Lambda^\B =\beta(\Lambda^\nabla)$ since $\Lambda^\nabla=\{\text{rank-one matrices}\}$. This observation grants that many properties of $f$ are immediately deduced from the corresponding properties of Morrey's quasiconvex functions, for example
\begin{lemma}
    If $f$ is $\B$-quasiconvex and has linear growth, then it is globally Lipschitz and $\Lambda_\B$-convex, meaning that for all $t\in[0,1],y,y'\in\R^N$ it holds
    \begin{equation*}
        f(ty+(1-t)y')\le tf(y) +(1-t) f(y'),\text{ provided } y-y'\in \Lambda_\B.
    \end{equation*}
\end{lemma}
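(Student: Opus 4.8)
The plan is to reduce the lemma to the corresponding classical facts for Morrey quasiconvex functions on $\R^{m\times n}$ via the linear surjection $\beta$ introduced just above. Set $g := f\circ\beta\colon \R^{m\times n}\to\R$. By the observation preceding the statement, $g$ is quasiconvex in the sense of Morrey; moreover, since $\beta$ is linear and surjective, the linear growth bound \eqref{eq:flingrowth} for $f$ transfers (up to a dimensional constant depending on $\|\beta\|$ and on a right inverse of $\beta$) to a linear growth bound for $g$. Hence $g$ is a Morrey quasiconvex function of linear growth on $\R^{m\times n}$, and the two assertions of the lemma will follow by pulling back the analogous statements for $g$ through $\beta$.

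For the Lipschitz bound, first I would recall the standard fact that a real-valued Morrey quasiconvex function is separately convex, hence locally Lipschitz; combined with the linear growth bound this upgrades to a global Lipschitz estimate $[g]_{C^{0,1}(\R^{m\times n})}\lesssim_{\B,L} 1$ by a routine difference-quotient argument using quasiconvexity along rank-one segments (this is classical, e.g. the Marcellini/Dacorogna-type bound that linear growth plus rank-one convexity gives a global Lipschitz constant controlled by the growth constant). Then for $y,y'\in\R^N$ pick $v,v'\in\R^{m\times n}$ with $\beta(v)=y$, $\beta(v')=y'$ and $|v-v'|\lesssim|y-y'|$ (possible since $\beta$ is surjective with a bounded right inverse on the orthogonal complement of its kernel); then $|f(y)-f(y')|=|g(v)-g(v')|\le [g]_{C^{0,1}}|v-v'|\lesssim_{\B,L}|y-y'|$, which is the claimed global Lipschitz bound for $f$.

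For $\Lambda_\B$-convexity, the key input is the identity $\Lambda_\B=\beta(\Lambda^\nabla)$ noted above, where $\Lambda^\nabla$ is the cone of rank-one matrices. So given $y,y'\in\R^N$ with $y-y'\in\Lambda_\B$, I would write $y-y'=\beta(a\otimes\xi)$ for some $a\otimes\xi\in\Lambda^\nabla$, fix any $v'\in\R^{m\times n}$ with $\beta(v')=y'$, and set $v:=v'+a\otimes\xi$, so $\beta(v)=y$ and $v-v'$ is rank one. Then for $t\in[0,1]$, $\beta(tv+(1-t)v')=ty+(1-t)y'$, and the rank-one (hence quasiconvex, hence rank-one) convexity of $g$ along the segment $[v',v]$ — which is a standard consequence of Morrey quasiconvexity — gives $g(tv+(1-t)v')\le t g(v)+(1-t)g(v')$, i.e. $f(ty+(1-t)y')\le t f(y)+(1-t)f(y')$. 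The one point requiring a little care, and the main (mild) obstacle, is the transfer of linear growth through $\beta$: one must observe that $\beta$ restricted to $(\ker\beta)^{\perp}$ is a linear isomorphism onto $\R^N=\Span\Lambda_\B$, so that every $y\in\R^N$ has a preimage of comparable size and the estimates close with constants depending only on $\B$ and $L$; everything else is a direct citation of classical facts about Morrey quasiconvex integrands of linear growth.
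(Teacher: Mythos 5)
Your argument is correct and is exactly the route the paper intends: the paper states the lemma without proof, remarking just beforehand that such properties follow immediately from the corresponding classical facts for Morrey quasiconvex functions via the surjection $\beta$ (using $\Lambda_\B=\beta(\Lambda^\nabla)$ and that linear growth transfers to $f\circ\beta$). Your write-up simply makes explicit the same reduction, including the bounded right inverse of $\beta$ on $(\ker\beta)^\perp$, so there is nothing substantively different to compare.
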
 
This Lemma entails the following
\begin{proposition}	\label{prop:Bqcconsequences}
	Assume $f\colon \R^N\to \R$ is $\B$-quasiconvex and has linear growth.
	Consider the upper and lower recession functions
	\begin{equation*}
	    f^\#(y) =\limsup_{y'\to y,t\uparrow \infty} f(ty')/t,\quad f_\#(y) =\liminf_{y'\to y,t\uparrow \infty} f(ty')/t,
	\end{equation*}
	which are real valued and positively 1-homogeneous. Then $\Lambda_\B\subset \{f^\# = f_\#\}$ so that the functional $u\mapsto \int_\Om f(\B u)$ is well defined on $BV^\B(\Om)$ and area-strict continuous (cf. \Cref{rmk:areacontimproved} and \eqref{eq:rankone}).
\end{proposition}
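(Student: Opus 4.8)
The plan is to derive both assertions from the preceding Lemma, which gives that $f$ is globally $L$-Lipschitz for some $L>0$ and $\Lambda_\B$-convex; that $f^\#$, $f_\#$ are real valued and positively $1$-homogeneous is immediate from linear growth. \emph{Step 1 (behaviour along cone rays).} Fix $y\in\Lambda_\B$. Since $\Lambda_\B$ is a cone, any difference $sy-s'y=(s-s')y$ again lies in $\Lambda_\B$, so $\Lambda_\B$-convexity forces $g_y(s):=f(sy)$ to be convex on $\R$. A convex function of one real variable with at most linear growth has difference quotient $s\mapsto (g_y(s)-g_y(0))/s$ nondecreasing and bounded above as $s\to+\infty$, hence
\begin{equation*}
    f^\infty(y):=\lim_{s\to+\infty}\frac{f(sy)}{s}
\end{equation*}
exists and is finite; letting $s\to+\infty$ in $|f(sy)/s-f(sy')/s|\le L|y-y'|$ shows in addition that $f^\infty$ is $L$-Lipschitz on $\Lambda_\B$.

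\emph{Step 2 (the joint limit).} Global Lipschitz continuity is the essential tool here: if $y'\to y$ and $t\to+\infty$ then $|f(ty')/t-f(ty)/t|\le L|y'-y|\to0$, so $f(ty')/t\to f^\infty(y)$. Hence $f^\#(y)=f_\#(y)=f^\infty(y)$ for every $y\in\Lambda_\B$, which is the first claim. Since $\R^N=\Span\Lambda_\B$, I would then extend $f^\infty$ from $\Lambda_\B$ to an $L$-Lipschitz function on $\R^N$ (e.g.\ by the standard Lipschitz extension formula); the extension is irrelevant in what follows.

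\emph{Step 3 (well-posedness and area-strict continuity).} In \eqref{eq:convention} the recession function is only ever evaluated at the polar $\frac{d(\B u)^s}{d|\B u|^s}$, which by \eqref{eq:rankone} lies in $\Lambda_\B$ for $|\B u|^s$-a.e.\ point, where $f^\infty$ exists by Steps 1--2; thus $\int_\Om f(\B u)$ is well defined on $BV^\B(\Om)$, independently of the chosen extension. For continuity I would invoke \Cref{rmk:areacontimproved}: it suffices that the perspective integrand $F$, equal to $|t|\,f(z/|t|)$ for $t\ne0$ and to $f^\infty(z)$ for $t=0$, have discontinuity set negligible for $|\tilde\mu|$, where $\mu:=\B u$ and $\tilde\mu=(\mu,\leb^n\mres\Om)$ as in \eqref{eq:tildemu}. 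On $\{t\ne0\}$, $F$ is continuous since $f$ is; at a point $(z_0,0)$ with $z_0\in\Lambda_\B$, setting $s:=1/|t|\to+\infty$ gives $F(z,t)=f(sz)/s$ with $z\to z_0$, so $F(z,t)\to f^\infty(z_0)=F(z_0,0)$ by Step 2, i.e.\ $F$ is continuous there too. But by \eqref{eq:rankone} the polar $\frac{d\tilde\mu}{d|\tilde\mu|}$ takes values $|\tilde\mu|$-a.e.\ either in $\{t\ne0\}$ (on the absolutely continuous part) or in $\Lambda_\B\times\{0\}$ (on the singular part), so the discontinuity set of $F$ is $|\tilde\mu|$-negligible and \Cref{thm:areacont} through \Cref{rmk:areacontimproved} applies.

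\emph{Expected main obstacle.} The only delicate passage is Step 2: $\Lambda_\B$-convexity controls $f$ along directions inside the cone only, and it is exactly global Lipschitz continuity — available from the preceding Lemma precisely because $f$ is $\B$-quasiconvex with linear growth — that lets one move transversally to $\Lambda_\B$ at no cost. This same fact is what makes the perspective integrand continuous across $\{t=0\}$ at cone directions, which (together with \eqref{eq:rankone}) is all the continuity that \Cref{rmk:areacontimproved} actually needs; everything else is bookkeeping with the Lebesgue decomposition.
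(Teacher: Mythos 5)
Your argument is correct and follows exactly the route the paper intends (the paper leaves the proof implicit, saying the preceding Lemma ``entails'' the Proposition): $\Lambda_\B$-convexity gives convexity of $s\mapsto f(sy)$ along cone rays, hence existence of the radial recession limit, global Lipschitzness upgrades it to the joint limit so $f^\#=f_\#$ on $\Lambda_\B$, and then \eqref{eq:rankone} together with \Cref{rmk:areacontimproved} yields well-posedness and area-strict continuity. Your extra care with the Lipschitz extension of $f^\infty$ off $\Lambda_\B$ and the $|\tilde\mu|$-negligibility of the discontinuity set of the perspective integrand is exactly the bookkeeping the paper's citation of the Remark presupposes.
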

Now that we known how to give a meaning to $\int_\Om f(\B u)$ it is natural to hope that this functional is l.s.c. with respect the weak* convergence (this is not used in the proof of \Cref{thm:epsreg}). The only issue is the possible concentration of mass on $\de \Om$, which is a problem since no trace operator is available for a general elliptic operator in the $L^1$ setting \footnote{This would no be an issue if $\B$ was complex elliptic.}. We report the following
\begin{theorem}\label{thm:lsc}
    Let $u,\{u_j\}_{j\in\N}$ in $BV^\B(\Om)$ and assume $u_j\weakstar u$. Then there exist a measure $\lambda\in\mathcal M(\overline\Om)^+$ such that
    \begin{equation*}
        \liminf_j \int_\omega f(\B u_j)\geq \int_\omega f(\B u) 
    \end{equation*}
    whenever $\omega\subset \Om$ is an open set such that $\lambda(\de \omega) =0$.
\end{theorem}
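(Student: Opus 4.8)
The plan is to reduce the lower-semicontinuity statement for $\int_\omega f(\B u_j)$ to the already-known area-strict continuity (Proposition \ref{prop:Bqcconsequences}) plus a blow-up/localization argument which absorbs the possible concentration on $\partial\omega$ into a fixed nonnegative measure $\lambda$. First I would introduce, for each $j$, the nonnegative measure $\mu_j := \langle \B u_j\rangle\,\leb^n$ read through $E$, i.e. $\mu_j := E(\B u_j)$ as a measure on $\overline\Om$ (via \eqref{eq:convention}), together with the scalar measures $|\B u_j|$; by the linear-growth bound \eqref{eq:flingrowth} and weak* convergence these are uniformly bounded in $\mathcal M(\overline\Om)$, so after passing to a (not relabelled) subsequence we obtain $\mu_j \weakstar \lambda$ in $\mathcal M(\overline\Om)^+$ for some finite nonnegative $\lambda$. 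This $\lambda$ is the measure in the statement; since $\lambda(\overline\Om)<\infty$, the set of radii/levels with $\lambda(\partial\omega)>0$ is at most countable, which is where the ``whenever $\lambda(\partial\omega)=0$'' hypothesis will be used.

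Next, fix an open $\omega\Subset\Om$ with $\lambda(\partial\omega)=0$. The idea is to compare $\int_\omega f(\B u_j)$ with a mollified competitor on a slightly larger set and use Proposition \ref{prop:Bqcconsequences}. Concretely, for a small parameter $\delta>0$ pick $\omega \Subset \omega' \Subset \Om$ with $\lambda(\partial\omega') = \leb^n(\partial\omega')=|\B u|(\partial\omega')=0$ (possible for a.e. dilation parameter) and $\lambda(\omega'\setminus\omega)<\delta$. On $\omega'$ we mollify: $u_j^\eps := u_j * \phi_\eps$, which converges area-strictly to $u_j$ on $\omega'$ by Lemma \ref{lem:approximationBVB}, and also $u^\eps := u*\phi_\eps \areastrict u$ on $\omega'$. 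The key point is that $u_j^\eps \to u^\eps$ in $C^1(\overline\omega)$ as $j\to\infty$ for fixed $\eps$ (since $u_j\to u$ in $L^1_\loc$ and mollification is a smoothing convolution), whence by dominated convergence $\int_\omega f(\B u_j^\eps)\to \int_\omega f(\B u^\eps)$. Then one writes, schematically,
\begin{equation*}
    \int_\omega f(\B u_j) \geq \int_{\omega'} f(\B u_j) - \int_{\omega'\setminus\omega} f(\B u_j) \geq \int_{\omega'} f(\B u_j^\eps) - \text{(error}_{j,\eps}) - L\big(\leb^n(\omega'\setminus\omega) + |\B u_j|(\omega'\setminus\omega)\big),
\end{equation*}
where the last term is controlled, uniformly in $j$, by $\lambda(\omega'\setminus\omega)+o(1)$ using $\mu_j\weakstar\lambda$ and $E(y)\simeq |y|$ for large $|y|$ together with $f$'s linear growth; the term $\text{error}_{j,\eps}$ comes from comparing $\int_{\omega'} f(\B u_j)$ with $\int_{\omega'} f(\B u_j^\eps)$ and is handled by area-strict continuity (Proposition \ref{prop:Bqcconsequences}) applied on $\omega'$, uniformly enough in $j$ after another diagonal extraction, eventually letting $\eps\downarrow 0$.

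The honest way to organize this is the standard ``two-parameter diagonalization'': first send $j\to\infty$ with $\eps$ fixed (getting $\int_\omega f(\B u^\eps)$ on the right, plus the $\lambda$-error), then send $\eps\downarrow 0$ using area-strict continuity of $u^\eps \areastrict u$ on $\omega'$ to recover $\int_{\omega'} f(\B u) \geq \int_\omega f(\B u)$, and finally send $\delta\downarrow 0$ so the $\lambda(\omega'\setminus\omega)$-error vanishes, crucially because $\lambda(\partial\omega)=0$ forces $\lambda(\omega'\setminus\omega)\to 0$ as $\omega'\downarrow\omega$. I expect the main obstacle to be exactly the control of the ``boundary layer'' $\omega'\setminus\omega$: one must estimate $\int_{\omega'\setminus\omega} f(\B u_j)$ uniformly in $j$, and since $f$ is only Lipschitz with linear growth this quantity is comparable to $|\B u_j|(\omega'\setminus\omega)$ plus $\leb^n$, which a priori need not be small — this is precisely why no trace theorem is available and why one is forced to pass to the auxiliary measure $\lambda$ (a weak* limit of $E(\B u_j)$ or equivalently of $|\widetilde{\B u_j}|$) rather than working with $\B u$ directly. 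Making the interchange of the $j\to\infty$ and $\eps\downarrow 0$ limits rigorous — i.e. producing a genuinely uniform-in-$j$ rate in the area-strict comparison of $u_j$ with $u_j^\eps$ — is the delicate step; it can be done by noting $\int_{\omega'}E(\B u_j^\eps)\leq \int_{\omega'}E(\B u_j)$ (Jensen, since $E$ is convex) and that equality is approached at a rate governed by $|\B u_j|(\omega')\leq \lambda(\overline{\omega'})+o(1)$, which is uniformly bounded, so a single modulus of continuity works for the whole sequence.
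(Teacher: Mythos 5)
Your construction of $\lambda$ is the right move and matches the role this measure plays in the statement: take $\lambda$ as a weak\textsuperscript{*} limit in $\mathcal M(\overline\Om)^+$ of the uniformly bounded measures $E(\B u_j)$ (or $|\B u_j|+\leb^n$), and use $\lambda(\de\omega)=0$ together with $\limsup_j\mu_j(\overline{\omega'}\setminus\omega)\le\lambda(\overline{\omega'}\setminus\omega)$ to make the boundary layer harmless; that part is fine. Note, for the record, that the paper does not prove this theorem at all: it is ``reported'' from the literature on lower semicontinuity of linear-growth functionals under PDE constraints (cf.\ \cite{ArrDePRin17}), whose proof goes through localization/blow-up and the wave-cone structure \eqref{eq:rankone} of the singular part, not through mollification alone.

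The genuine gap is in your central step, namely the claim that $\int_{\omega'}f(\B u_j^\eps)$ can be compared to $\int_{\omega'}f(\B u_j)$ with an error that is small \emph{uniformly in $j$}, with a modulus ``governed by $|\B u_j|(\omega')$''. A uniform bound on mass gives no rate for the area-strict convergence $u_j*\phi_\eps\areastrict u_j$: take $u_j$ oscillating at scale $1/j$ (a laminate-type sequence with bounded total variation); for any fixed $\eps$ and $j\gg 1/\eps$, mollification averages out the oscillation, $\int E(\B u_j^\eps)$ drops by a strict Jensen gap below $\int E(\B u_j)$, and $\int f(\B u_j^\eps)$ jumps up to roughly $\int f(\B u)$, so $\sup_j\mathrm{error}_{j,\eps}$ does not tend to $0$ as $\eps\downarrow 0$. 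More structurally, your argument uses $\B$-quasiconvexity of $f$ nowhere (only convexity of $E$ via Jensen and the Reshetnyak-type continuity of Proposition \ref{prop:Bqcconsequences}, which needs merely the existence of $f^\infty$ on $\Lambda_\B$); if the scheme worked it would prove weak\textsuperscript{*} lower semicontinuity for \emph{every} continuous integrand of linear growth, contradicting the fact that quasiconvexity is necessary for sequential weak\textsuperscript{*} l.s.c. The inequality ``energy does not decrease much under mollification/averaging'' \emph{is} the content of lower semicontinuity and must be extracted from $\B$-quasiconvexity by a localization or Young-measure argument, treating the absolutely continuous part and the singular part (via \eqref{eq:rankone}) separately; it cannot be obtained from a uniform modulus for the area functional.
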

\subsection{On the strong $\B$-quasiconvexity assumption}\label{subsec:strongBqc}
The following striking result fully justifies assumption \cref{eq:fstrongBqc}, it follows with notational changes from the case $\B =\nabla$, see \cite[Proposition 3.1]{Kristensen18} and \cite{Chen17} for a more detailed treatment. \begin{proposition}    Assume $f\colon \R^N\to \R$ is a continuous integrand of linear growth, let $\Om\subset \R^n$ be a bounded Lipschitz domain and $g \in W^{1,1}(\R^n, \R^N )$. Then minimizing sequences for the variational problem
\begin{equation*}
    \inf_{u\in W^{1,1}(\R^n,\R^N),\supp(u-g)\subset \Om }\int_\Om f(\B u(x))\, dx
\end{equation*}
are all bounded in $BV^\B(\Om)$ if and only if $f-\ell E$ is $\B$-quasiconvex at some point $y_0\in\R^N$, for some $\ell >0$.
\end{proposition}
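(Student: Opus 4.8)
The statement is an ``if and only if'', and both directions are essentially a transcription of the $\B=\nabla$ case (\cite[Proposition 3.1]{Kristensen18}), the only delicate point being that no trace theorem is available; I would, however, not need a trace theorem here because the admissible competitors live in $W^{1,1}(\R^n,\R^N)$ with $\supp(u-g)\subset\Om$, so extension by $g$ across $\de\Om$ is automatic and the problem is genuinely global. The plan is to prove the contrapositive of the ``only if'' direction and a direct truncation/rescaling argument for the ``if'' direction.

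For the \emph{``if''} direction, assume $f-\ell E$ is $\B$-quasiconvex at $y_0$ for some $\ell>0$. The key algebraic observation is that $\B$-quasiconvexity \emph{at a point} $y_0$ already forces, for every $\varphi\in\D(Q,\R^m)$,
\[
\ell\fint_Q E(y_0+\B\varphi)\,dx \le \fint_Q f(y_0+\B\varphi)\,dx - f(y_0),
\]
and since $E(z)\ge \tfrac12(|z|-1)$ (say $E(z)\ge c|z|-C$ with $c,C>0$), this gives an $L^1$ bound on $\B\varphi$ in terms of the energy. One then upgrades this from test functions on $Q$ to the competitors on $\Om$ by a standard covering/blow-up argument: cover $\Om$ (up to a null set) by small disjoint cubes $Q_i$ on which $u-g$ can, after subtracting $g$ and using that $g\in W^{1,1}$, be treated as an admissible variation, apply the pointwise inequality above on each cube with $y_0$ the average of $\B u$ there, sum up, and absorb the contribution of $\B g$ and of the lower-order term $f(y_0)$ using linear growth of $f$ and $g\in W^{1,1}$. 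This yields $\|\B u\|_{L^1(\Om)}\lesssim \F[u,\Om] + \|g\|_{W^{1,1}} + 1$, so minimizing sequences (which have bounded energy) are bounded in $BV^\B(\Om)$.

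For the \emph{``only if''} direction, suppose $f-\ell E$ fails to be $\B$-quasiconvex at \emph{every} $y_0$ for \emph{every} $\ell>0$. Then for each $k\in\N$ and a suitable point $y_k\in\R^N$ there is $\varphi_k\in\D(Q,\R^m)$ with
\[
\fint_Q f(y_k+\B\varphi_k)\,dx < f(y_k) + \tfrac1k\fint_Q E(y_k+\B\varphi_k)\,dx,
\]
and (after a diagonal/normalisation argument, possibly letting $|y_k|\to\infty$ or rescaling $\varphi_k$) one arranges that $\fint_Q E(\B\varphi_k)\to\infty$ while the energy stays controlled. Periodically extending $\varphi_k$, rescaling to fit inside $\Om$, and gluing to $g$ with a cutoff (here the Leibniz rule \eqref{eq:Bleibniz} produces only a lower-order $L^1$ error because $\nabla\eta$ hits $u$ which is still only $L^{n/(n-1)}$-bounded — but that term is divided by the rescaling factor and vanishes, exactly as in the full-gradient case) produces a minimizing sequence $u_k$ with $\F[u_k,\Om]$ bounded but $\|\B u_k\|_{L^1(\Om)}\to\infty$. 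Hence minimizing sequences are not all bounded, proving the contrapositive.

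The main obstacle — and the reason this is not a completely automatic transcription — is the gluing step in the ``only if'' direction: in the $\B=\nabla$ case one glues a periodic oscillation to the boundary datum using a cutoff, and the error term from differentiating the cutoff involves $u$ itself rather than $\B u$, which for $BV^\B$ is only controlled in $L^{n/(n-1)}_{\mathrm{weak}}$ (via the convolution-kernel footnote after Lemma \ref{lem:embeddingsBVB}), not in $L^\infty$. One must therefore check that this error, appearing with the factor $\Bhat[\nabla\eta]$, is genuinely of lower order after the rescaling that concentrates the oscillation — i.e. that the rescaling gain beats the loss from the worse integrability of $u$. This is the one place where ellipticity of $\B$ (through the $L^{n/(n-1)}_{\mathrm{weak}}$ embedding) is used, and it is exactly the kind of estimate the author flags as needing care; everything else follows ``with notational changes'' from \cite{Kristensen18,Chen17}.
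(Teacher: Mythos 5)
Your sketch has genuine gaps in both directions, and it does not reproduce the argument the paper actually relies on (the paper gives no proof at all here: it defers entirely to \cite{Kristensen18}, Proposition 3.1, and to \cite{Chen17}). In the ``if'' direction the hypothesis is $\B$-quasiconvexity of $f-\ell E$ \emph{at the single point} $y_0$, so you are not entitled to apply the inequality ``on each cube with $y_0$ the average of $\B u$ there'': recentering at other base points is exactly what quasiconvexity at one point does not give. Moreover $u-g$ restricted to a small cube $Q_i$ is not compactly supported in $Q_i$, so it is not an admissible variation cube by cube. Even if you use the inequality correctly (extend $\varphi=u-g$ by zero, rescale into a large cube, which yields $\ell\int_\Om E(y_0+\B\varphi)\le\int_\Om f(y_0+\B\varphi)+C(|\Om|,y_0)$), you still must compare $\int_\Om f(y_0+\B\varphi)$ with the actual energy $\int_\Om f(\B g+\B\varphi)$; since $f$ is only continuous with linear growth, a shift inside $f$ by $\B g-y_0$ cannot be ``absorbed using linear growth''. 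The known proof bypasses both problems through the $\B$-quasiconvex envelope $h:=(f-\ell E)^{qc}$: quasiconvexity at one point forces $h$ to be finite, hence a $\B$-quasiconvex function of linear growth, hence Lipschitz and $\Lambda_\B$-convex; writing $f\ge h+\ell E$ and using quasiconvexity of $h$ at a fixed point together with its Lipschitz bound to handle the $\B g$-shift gives the coercivity estimate. That envelope step is the essential idea missing from your sketch.

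In the ``only if'' direction the construction is asserted rather than proved. First, a minimizing sequence must have energy converging to the infimum of the $g$-problem, not merely bounded energy, so the perturbations have to be attached to a near-minimizer with an energy increment tending to zero. Second, the witnesses $\varphi_k$ of the failure of quasiconvexity of $f-\tfrac1k E$ live at base points $y_k$ that bear no a priori relation to $\B g$ or to $\B$ of a near-minimizer, and without Lipschitz control on $f$ you cannot transfer the oscillation from background $y_k$ to the actual background; nor is it clear how ``letting $|y_k|\to\infty$ or rescaling'' produces $\fint_Q E(\B\varphi_k)\to\infty$, since rescaling and periodic tiling leave the mean of $E(\B\varphi_k)$ unchanged. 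Finally, the one difficulty you do flag (the cutoff term $\Bhat[\nabla\eta]u$ with $u$ only in $L^{n/(n-1)}_{\mathrm{weak}}$) is largely a non-issue at this step: the $\varphi_k\in\D(Q)$ are compactly supported, so their rescaled periodic copies sit inside $\Om$ without any cutoff, and the Leibniz error you worry about never appears; the real obstructions are the two listed above, which is precisely why the paper sends the reader to \cite{Kristensen18} and \cite{Chen17} rather than calling the adaptation immediate.
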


\subsection{$L^p$ regularity for Legendre-Hadamard elliptic systems}
A symmetric bilinear form $Q\colon \R^N\times \R^N \to R$ is called $\B$-Legendre--Hadamard elliptic if there is $\lambda >0$ 
\begin{equation}\label{eq:blegendrehadamarddef}
    Q(\Bhat[\xi]v,\Bhat[\xi]v)\geq \lambda |\xi|^2|v|^2\text{ for all }\xi\in\R^n,v\in \R^m,
\end{equation}
the positive constant $\lambda$ is called the ellipticity constant.
This is equivalent to ask that $\widetilde Q = Q(\beta(\cdot),\beta(\cdot))$ is Legendre--Hadamard elliptic on $\R^m\otimes\R^n$. When $f-\ell E $ is $C^2$ and $\B$-quasiconvex, using that $f\circ\beta$ is rank-one convex we immediately find that $f''(y)$ is $\B$-Legendre--Hadamard elliptic for each $y$ for some $\lambda=\lambda(\ell,\B,y)$.
\begin{theorem}\label{ellipticregconstantcoeff} Let $Q\colon \R^N\times \R^N\to \R$ be a symmetric, $\B$-Legendre--Hadamard elliptic, bilinear form with ellipticity constant $\lambda$ and $|Q|\leq \Lambda$. Given some ball $B=B_R(x_0)\subset \rn$ and some exponents $p\in(1,+\infty)$ and $q\geq 2$, the following holds.
	\begin{itemize}
		\item[(a)] For each $g\in W^{1-1/p,p}(\de B,\R^m)$ there exist a unique solution $h\in W^{1,p}(B,\R^m)$ to the system
		\begin{equation}\label{eq:systema}
		\begin{cases}-\B^*\left(Q.\B h\right)=0&\text{ in }B,\\
		\tr_{\de B}(h)=g&\text{ on }\de B,\end{cases}
		\end{equation}
		where the first equation is intended in the distribution sense, and
		\begin{equation*}
		\|\nabla h\|_{L^p(B,\R^m\otimes \rn)}\lesssim_{n,p,\B,\Lambda/\lambda}[g]_{W^{1-1/p,p}(\de B,\R^m)}.
		\end{equation*}
		Furthermore, $h\in C^\infty(B,\R^m)$ and for every $0<r<R$ and $z\in \R^m\otimes \rn$:
		\begin{equation*}
		\sup_{B_{r/2}(x_0)} |\nabla h-z|+r\, \sup_{B_{r/2}(x_0)}|\nabla^2 h|\lesssim_{n,\B,\Lambda/\lambda}\fint_{B_r(x_0)}|\nabla h(x)- z|\, dx.
		\end{equation*}
		\item[(b)] For each $f\in L^q(B,\R^m)$ there exist a unique solution $w\in W^{2,q}(B,\R^m)$ to the system
		\begin{equation}\label{eq:systemb}
		\begin{cases}-\B^*\left(Q.\B w\right)=f&\text{ in }B,\\
		\tr_{\de B}(w)=0&\text{ on }\de B,\end{cases}
		\end{equation}
		where the first equation is intended in the distribution sense, and
		\begin{equation*}
		\| w\|_{W^{2,q}(B,\R^m\otimes \rn)}\lesssim_{n,q,\B,\Lambda/\lambda,R}\|f\|_{L^q( B,\R^m)}.
		\end{equation*}
	\end{itemize} 
\end{theorem}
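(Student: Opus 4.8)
The plan is to reduce both Dirichlet problems to the classical linear theory of constant–coefficient second order Legendre--Hadamard elliptic systems on a ball. The algebraic point is that, writing $\B=\beta\circ\nabla$ with $\beta\colon\R^m\otimes\rn\to\R^N$ the surjective linear map $v\otimes\xi\mapsto\Bhat[\xi]v$, one has $Q(\B u,\B v)=\widetilde Q(\nabla u,\nabla v)$ for all $u,v$, where $\widetilde Q:=Q(\beta\,\cdot,\beta\,\cdot)$ is, by hypothesis, Legendre--Hadamard elliptic on $\R^m\otimes\rn$ with ellipticity constant $\lambda$ and $|\widetilde Q|\le|\beta|^2\Lambda$, with $|\beta|$ depending only on $\B$. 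Consequently the operator $u\mapsto\B^*(Q.\B u)$, defined distributionally by $\langle\B^*(Q.\B u),\varphi\rangle=\int_BQ(\B u,\B\varphi)\,dx=\int_B\widetilde Q(\nabla u,\nabla\varphi)\,dx$ for $\varphi\in C^\infty_c(B,\R^m)$, is exactly the Euler--Lagrange operator of the Legendre--Hadamard-elliptic quadratic form $u\mapsto\int\widetilde Q(\nabla u,\nabla u)$, i.e.\ a classical constant-coefficient second order elliptic system, and all $\B$-dependence in the constants below enters only through $\beta$. After translating $x_0=0$, items (a) and (b) are then the inhomogeneous Dirichlet problems for this system on $B=B_R$.

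I would first record coercivity: by Plancherel and the rank-one identity $\widehat{\nabla u}(\xi)=i\,\hat u(\xi)\otimes\xi$, for $u\in C^\infty_c(B,\R^m)$ one has $\int_B\widetilde Q(\nabla u,\nabla u)\,dx=\int\widetilde Q(\hat u(\xi)\otimes\xi,\overline{\hat u(\xi)}\otimes\xi)\,d\xi\ge\lambda\|\nabla u\|_{L^2}^2$ (expanding into real and imaginary parts and using Legendre--Hadamard), so by density and Poincar\'e the bilinear form $\int_B\widetilde Q(\nabla u,\nabla v)$ is bounded and coercive on $W^{1,2}_0(B,\R^m)$ and Lax--Milgram applies. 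For (a): extend $g-\fint_{\de B}g$ to some $G\in W^{1,p}(B,\R^m)$ via the bounded right inverse of the trace operator, and solve $-\B^*(Q.\B(h-G))=\B^*(Q.\B G)$ in $W^{1,p}_0$. Existence, uniqueness and $\|\nabla h\|_{L^p(B)}\lesssim\|\nabla G\|_{L^p(B)}$ follow from the classical fact (Agmon--Douglis--Nirenberg / Calder\'on--Zygmund theory for constant-coefficient Legendre--Hadamard systems on a ball) that $-\B^*(Q.\B\,\cdot)\colon W^{1,p}_0(B,\R^m)\to W^{-1,p}(B,\R^m)$ is an isomorphism for every $p\in(1,\infty)$ with norm controlled by $n,p,\B,\Lambda/\lambda$, while $\|\nabla G\|_{L^p(B)}\lesssim[g]_{W^{1-1/p,p}(\de B)}$ by the extension estimate and the fractional Poincar\'e inequality on $\de B$; by rescaling one may take $R=1$ here, since both sides of the asserted estimate are homogeneous of the same degree in $R$, so the constant is $R$-independent. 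Interior smoothness of $h$ is the usual hypoellipticity of constant-coefficient elliptic systems. For the decay estimate, since $Q$ is constant the affine map $\ell_z(x):=zx$ solves $\B^*(Q.\B\ell_z)=0$, hence $v:=\nabla h-z=\nabla(h-\ell_z)$ and each of its partial derivatives again solve the homogeneous system; applying the standard interior bounds $\sup_{B_{\rho/2}(0)}|w|\lesssim_{n,\B,\Lambda/\lambda}\fint_{B_\rho(0)}|w|$ and $\sup_{B_{\rho/2}(0)}|\nabla w|\lesssim_{n,\B,\Lambda/\lambda}\rho^{-1}\fint_{B_\rho(0)}|w|$ (valid for solutions, via Caccioppoli plus Sobolev iteration, since derivatives of solutions are solutions) to $w=v$ on $B_r(0)$ yields $\sup_{B_{r/2}}|\nabla h-z|+r\sup_{B_{r/2}}|\nabla^2h|\lesssim\fint_{B_r}|\nabla h-z|$.

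For (b), since $q\ge2$ we have $f\in L^q(B)\subset L^2(B)\subset W^{-1,2}(B)$, so Lax--Milgram again gives a unique $w\in W^{1,2}_0(B,\R^m)$ with $-\B^*(Q.\B w)=f$, and uniqueness within $W^{2,q}$ with vanishing trace is immediate from coercivity. The bound $\|\nabla^2w\|_{L^q(B)}\lesssim_{n,q,\B,\Lambda/\lambda}\|f\|_{L^q(B)}$ is the up-to-the-boundary Calder\'on--Zygmund / Agmon--Douglis--Nirenberg estimate for this constant-coefficient Legendre--Hadamard system on the ball (a smooth domain); this part is scale invariant, but recovering the full $W^{2,q}$ norm requires controlling the lower-order terms $\|w\|_{L^q}+\|\nabla w\|_{L^q}$ through the Poincar\'e inequality on $B_R$ (and the energy estimate), which costs powers of $R$, whence the stated $R$-dependence.

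The main obstacle --- the only genuinely non-elementary input --- is precisely the up-to-the-boundary $L^p$/$W^{2,q}$ theory for \emph{Legendre--Hadamard} (as opposed to strongly, i.e.\ Legendre) elliptic constant-coefficient systems: here no pointwise maximum principle is available and one must invoke the Fourier-multiplier / Agmon--Douglis--Nirenberg machinery (one checks that the symbol of $-\B^*(Q.\B\,\cdot)$ is, by Legendre--Hadamard ellipticity, a positive-definite symmetric matrix for $\xi\neq0$ and that the complementing condition holds for the Dirichlet problem). Granted that, the remainder is the linear-algebra identification $\B=\beta\circ\nabla$ together with bookkeeping of the $R$-scaling.
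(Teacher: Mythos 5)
Your reduction is exactly the paper's argument: the identity $Q(\B u,\B\varphi)=\widetilde Q(\nabla u,\nabla\varphi)$ with $\widetilde Q=Q(\beta\,\cdot,\beta\,\cdot)$ turns both Dirichlet problems into classical constant-coefficient Legendre--Hadamard systems, for which the paper simply cites the standard $L^p$/$W^{2,q}$ theory (as in \cite[Proposition 2.11]{Kristensen18}). Your additional sketches of coercivity, the ADN estimates and the interior decay are consistent with that cited theory, so the proposal is correct and follows essentially the same route.
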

\begin{proof}
	We just show how to reduce ourselves to the case $\B=\nabla$, then the theorem is essentially the $L^p$ regularity of Legendre-Hadamard elliptic systems, see for example \cite[Proposition 2.11]{Kristensen18} and the references therein. 
    
    Suppose you have a solution $h\in W^{1,p}(B,\R^m)$ of the system \eqref{eq:systema}, then
	$$
	0=\int_B Q[\B h(x),\B\varphi(x)]\, dx=\int_B \widetilde Q[\nabla h(x),\nabla\varphi(x)]\, dx.
	$$ 
	Conversely, the same formula shows that if we are able to solve the system
	\begin{equation*}
	\begin{cases}-\divergence\big(\widetilde Q.\nabla h\big)=0&\text{ in }B,\\
	\tr_{\de B}(h)=g&\text{ on }\de B,\end{cases}
	\end{equation*}
	for any Legendre-Hadamard elliptic form $\tilde Q$ with ellipticity constants $\sim \lambda,\sim\Lambda$, then we have in fact found a solution of system \eqref{eq:systema}.
	An identical reasoning applies to system \eqref{eq:systemb}. 
\end{proof}
\subsection{Some auxiliary estimates for \texorpdfstring{$\bm{E}$}{} and \texorpdfstring{$\bm{f}$}{}} 
In this section we collect some auxiliary estimates, we start with the area function $E$.
\begin{lemma}\label{lem:auxiliaryboundsE}
	For every $y,y'\in \R^N$ and $\alpha \geq 1$:
\begin{align}
&(\sqrt{2}-1)\, \min\left\{|y|^2,|y|\right \}\leq E(y)\leq \min\left\{|y|^2,|y|\right \},\label{Emin}\\
&E(\alpha y)\leq \alpha^2\, E(y),\label{quasihomogeneity}\\
&E(y+y')\leq 2\left( E(y)+E(y')\right).\label{quasitriangular}
\end{align}
\end{lemma}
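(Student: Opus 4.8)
The plan is to reduce all three inequalities to elementary estimates on the scalar profile $E_0(t):=\sqrt{1+t^2}-1$, $t\ge 0$, which satisfies $E(y)=E_0(|y|)$, is nondecreasing, and — after rationalising — admits the representation
\begin{equation*}
 E_0(t)=\frac{t^2}{1+\sqrt{1+t^2}}.
\end{equation*}
Everything then follows from this identity together with the trivial bounds $\sqrt{1+t^2}\ge\max\{1,t\}$, the bound $\sqrt{1+t^2}\le\sqrt2\,t$ valid for $t\ge 1$, and the inequality $(a+b)^2\le 2(a^2+b^2)$.

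For \eqref{Emin} the upper bound is immediate from the representation: $\sqrt{1+t^2}\ge 1$ gives $E_0(t)\le t^2/2\le t^2$, while $\sqrt{1+t^2}\ge t$ gives $E_0(t)\le t^2/(1+t)\le t$, hence $E_0(t)\le\min\{t^2,t\}$. For the lower bound I would split at $t=1$. If $t\le 1$ the denominator is at most $1+\sqrt2$, so $E_0(t)\ge t^2/(1+\sqrt2)=(\sqrt2-1)t^2=(\sqrt2-1)\min\{t^2,t\}$. If $t\ge 1$ then $1+\sqrt{1+t^2}\le 1+\sqrt2\,t\le(1+\sqrt2)t$, so $E_0(t)\ge t/(1+\sqrt2)=(\sqrt2-1)t=(\sqrt2-1)\min\{t^2,t\}$ (the constant being sharp at $t=1$).

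Inequality \eqref{quasihomogeneity} drops out of the representation as well: writing $t=|y|$ and using $\alpha\ge 1$ we have $\sqrt{1+\alpha^2t^2}\ge\sqrt{1+t^2}$, hence
\begin{equation*}
 E(\alpha y)=\frac{\alpha^2t^2}{1+\sqrt{1+\alpha^2t^2}}\le\frac{\alpha^2t^2}{1+\sqrt{1+t^2}}=\alpha^2E(y).
\end{equation*}
For \eqref{quasitriangular}, set $a=|y|$ and $b=|y'|$; by the triangle inequality and monotonicity of $E_0$ we get $E(y+y')=E_0(|y+y'|)\le E_0(a+b)$, and then
\begin{align*}
 E_0(a+b)&=\frac{(a+b)^2}{1+\sqrt{1+(a+b)^2}}\le\frac{2a^2+2b^2}{1+\sqrt{1+(a+b)^2}}\\
 &\le\frac{2a^2}{1+\sqrt{1+a^2}}+\frac{2b^2}{1+\sqrt{1+b^2}}=2E(y)+2E(y'),
\end{align*}
where the last step uses $(a+b)^2\ge a^2$ and $(a+b)^2\ge b^2$ in the two denominators.

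There is no genuine obstacle here; the only point worth flagging is that the sharp constant $2$ in \eqref{quasitriangular} is exactly the constant in $(a+b)^2\le 2(a^2+b^2)$, and to keep it one must split the fraction and decrease each denominator separately rather than bounding $a+b\le 2\max\{a,b\}$ and invoking \eqref{quasihomogeneity}, which would only yield the constant $4$.
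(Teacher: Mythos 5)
Your proof is correct: the rationalised identity $E_0(t)=t^2/(1+\sqrt{1+t^2})$ does give all three estimates with the stated constants, and each step checks out (in particular the split of the fraction in \eqref{quasitriangular} and the use of $(a+b)^2\ge a^2$, $(a+b)^2\ge b^2$ to lower the two denominators separately are exactly what is needed to keep the constant $2$). The paper itself states \Cref{lem:auxiliaryboundsE} without proof, treating it as elementary (the neighbouring lemmas are quoted from \cite{Kristensen18}), so there is no argument to compare against; yours is a perfectly good self-contained one. One small remark on your closing comment: while the crude route $a+b\le 2\max\{a,b\}$ indeed only gives $4$, the constant $2$ can also be reached without splitting fractions, by combining convexity of $E$ with \eqref{quasihomogeneity}: $E(y+y')=E\bigl(\tfrac12(2y)+\tfrac12(2y')\bigr)\le\tfrac12 E(2y)+\tfrac12 E(2y')\le 2\left(E(y)+E(y')\right)$, so your representation is convenient but not the only way to preserve the sharp constant.
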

\begin{lemma}[Lemma 2.8 in \cite{Kristensen18}]\label{quasiminimality}
	For every $\mu\in C_0(\omega, \R^N)^*$ we have
	\begin{equation}
	\inf_{y\in \R^N}\int_\omega E(\mu - y)\leq \int_\omega E(\mu -(\mu)_\omega)\leq 4\, \inf_{y\in \R^N}\int_\omega E(\mu - y),
	\end{equation}
where $(\mu)_\omega:=\frac{\mu(\omega)}{\leb^n(\omega)}$ is the mean value of $\mu$ and $\omega \subset \R^n$ is an open set.
\end{lemma}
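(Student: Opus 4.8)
The left-hand inequality is immediate --- it is the definition of infimum, tested with the competitor $y=(\mu)_\omega$ --- so the plan is to prove the right-hand one. First I would record the Lebesgue decomposition $\mu=\mu^{ac}\leb^n+\mu^s$ and note that, since $E^\infty(z)=|z|$, the convention \eqref{eq:convention} gives for \emph{every} $y\in\R^N$
\[
\int_\omega E(\mu-y)=\int_\omega E\bigl(\mu^{ac}(x)-y\bigr)\,dx+|\mu^s|(\omega),
\]
so the singular part contributes the same additive constant to every competitor and may be ignored; it thus suffices to control the absolutely continuous integral. Next I would observe that $y\mapsto\int_\omega E(\mu-y)$ is convex (because $E$ is), continuous, and coercive (by \eqref{Emin}, since $\leb^n(\omega)<\infty$), hence attains its infimum at some $y_0\in\R^N$ --- alternatively one works with a near-minimizer and lets the error go to $0$ at the end.

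The heart of the matter is a Jensen-type lower bound with the \emph{correct} (linear, not quadratic) homogeneity, which I would extract from the identity \eqref{eq:tildemu}. Applying it to the measure $\mu-y_0\leb^n$ on $\omega$, and using that the total variation of a vector measure over $\omega$ dominates the norm of its value on $\omega$, together with \eqref{eq:areafunction},
\[
\int_\omega E(\mu-y_0)=\bigl|(\mu-y_0\leb^n,\ \leb^n\mres\omega)\bigr|(\omega)\ge\bigl|\bigl(\leb^n(\omega)\,((\mu)_\omega-y_0),\ \leb^n(\omega)\bigr)\bigr|=\leb^n(\omega)\,\langle(\mu)_\omega-y_0\rangle\ge\leb^n(\omega)\,E\bigl((\mu)_\omega-y_0\bigr).
\]

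To conclude I would decompose $\mu^{ac}(x)-(\mu)_\omega=(\mu^{ac}(x)-y_0)+(y_0-(\mu)_\omega)$, apply the quasi-triangle inequality \eqref{quasitriangular} pointwise in $x$, integrate over $\omega$, add back $|\mu^s|(\omega)$, and finally invoke the previous display and the evenness of $E$:
\[
\int_\omega E(\mu-(\mu)_\omega)\le 2\int_\omega E(\mu-y_0)+2\,\leb^n(\omega)\,E\bigl(y_0-(\mu)_\omega\bigr)\le 4\int_\omega E(\mu-y_0)=4\inf_{y\in\R^N}\int_\omega E(\mu-y).
\]
I expect the only genuinely delicate step to be the middle display: applying Jensen's inequality directly to $E$ gives nothing useful because $E$ is not positively $1$-homogeneous, and it is precisely the reformulation $\int_\omega E(\nu)=\bigl|(\nu,\leb^n\mres\omega)\bigr|(\omega)$ --- that is, Jensen for the lifted measure $(\nu,\leb^n\mres\omega)$, whose integrand $|\cdot|$ \emph{is} $1$-homogeneous --- that produces the linear comparison needed to close the estimate with the constant $4$. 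Everything else is routine bookkeeping, and one should only note in passing that the statement is vacuous unless $0<\leb^n(\omega)<\infty$, in which range all the integrals above are finite since $E$ has linear growth and $\mu$ has finite total variation.
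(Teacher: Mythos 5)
Your argument is correct, and the paper itself gives no proof of this lemma (it is quoted from \cite{Kristensen18}); what you wrote is essentially the standard argument there: a Jensen-type lower bound showing $\leb^n(\omega)\,E\big((\mu)_\omega-y\big)\le\int_\omega E(\mu-y)$, followed by the quasi-triangle inequality \eqref{quasitriangular}, which yields the constant $4$. Two small remarks. First, the identity \eqref{eq:tildemu}, as printed in the paper with $E(z)=\langle z\rangle-1$, is off by an additive term: one actually has $\int_\omega E(\nu)=\big|(\nu,\leb^n\mres\omega)\big|(\omega)-\leb^n(\omega)$. So the first equality in your middle display is not literally true; however, your last step only uses $\langle z\rangle\ge E(z)=\langle z\rangle-1$, and the discarded $-1$ integrates to exactly $\leb^n(\omega)$, so the end-to-end estimate $\int_\omega E(\mu-y_0)\ge\leb^n(\omega)\,E\big((\mu)_\omega-y_0\big)$ is valid with either version of the identity --- it would be cleaner to state the corrected identity and keep the chain as an honest inequality throughout (equivalently, this is generalized Jensen for the convex, $1$-Lipschitz function $E$ with recession $E^\infty=|\cdot|$, applied separately to the absolutely continuous and singular parts). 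Second, the existence of a minimizer $y_0$ (hence the convexity/coercivity discussion) is unnecessary: since your two estimates hold for an arbitrary $y\in\R^N$, you can run the whole computation with $y$ in place of $y_0$ and take the infimum at the very end. With these cosmetic adjustments the proof is complete, and in the paper's application $\omega$ is always a ball, so the finiteness caveat on $\leb^n(\omega)$ is automatic.
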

\begin{lemma}[Lemma 2.9 in \cite{Kristensen18}]\label{sqaureroot}
For every $\mu\in C_0(\omega, \R^N)^*$ we have
\begin{equation}
\frac{1}{\leb^n(\omega)}\int_\omega |\mu|\leq 	\sqrt{\necc^2+2\necc}\quad \text{ with }\necc:=\frac{1}{\leb^n(\omega)}\int_\omega E(\mu).
\end{equation}
In particular for $\necc\leq 1$ we have $\fint_\omega |\mu|\leq\sqrt{3\necc}$.
\end{lemma}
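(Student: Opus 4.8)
The plan is to reduce to a pointwise/total-variation estimate for the measure $\mu$ by exploiting the two-sided comparison between $|z|$ and $E(z)$ supplied by \eqref{Emin}. Writing $\mu = \mu^{ac}\,\leb^n\mres\omega + \mu^s$ in its Lebesgue decomposition and recalling the convention \eqref{eq:convention}, one has $\int_\omega E(\mu) = \int_\omega E(\mu^{ac}(x))\,dx + \int_\omega E^\infty\big(\tfrac{d\mu^s}{d|\mu^s|}\big)\,d|\mu^s|$, and since $E^\infty(z) = |z|$ (a one-line computation from the definition of $E$), the singular part contributes exactly $|\mu^s|(\omega)$. So the quantity $\int_\omega E(\mu)$ splits into $\int_\omega E(\mu^{ac})\,dx + |\mu^s|(\omega)$, while $\int_\omega|\mu| = \int_\omega |\mu^{ac}|\,dx + |\mu^s|(\omega)$.

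First I would handle the absolutely continuous part. By \eqref{Emin} we have, pointwise, $|z| \le \sqrt{E(z)^2 + 2E(z)}$: indeed from $\langle z\rangle = 1 + E(z)$ we get $|z|^2 = \langle z\rangle^2 - 1 = E(z)^2 + 2E(z)$, which is in fact an exact identity, not just an inequality. Hence $\int_\omega |\mu^{ac}(x)|\,dx = \int_\omega \sqrt{E(\mu^{ac}(x))^2 + 2E(\mu^{ac}(x))}\,dx$. Now I would apply Jensen's inequality: the function $t \mapsto \sqrt{t^2 + 2t}$ is concave on $[0,\infty)$ (its second derivative is negative, as $\tfrac{d}{dt}\sqrt{t^2+2t} = \tfrac{t+1}{\sqrt{t^2+2t}}$ is decreasing), so averaging over $\omega$ with respect to $\tfrac{1}{\leb^n(\omega)}\leb^n$ gives
\begin{equation*}
\frac{1}{\leb^n(\omega)}\int_\omega |\mu^{ac}(x)|\,dx \le \sqrt{ \bar E_{ac}^2 + 2\bar E_{ac}},\qquad \bar E_{ac} := \frac{1}{\leb^n(\omega)}\int_\omega E(\mu^{ac}(x))\,dx.
\end{equation*}

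The remaining step is to combine this with the singular part and recognize that the total $\necc = \bar E_{ac} + \tfrac{1}{\leb^n(\omega)}|\mu^s|(\omega)$ still satisfies the same bound. Set $a := \bar E_{ac}$, $b := \tfrac{1}{\leb^n(\omega)}|\mu^s|(\omega)$, so $\necc = a+b$ and we must show $\sqrt{a^2+2a} + b \le \sqrt{(a+b)^2 + 2(a+b)}$. Squaring (both sides are nonnegative), this is equivalent to $2b\sqrt{a^2+2a} \le 2ab + 2b$, i.e. $\sqrt{a^2+2a} \le a+1$, which holds since $(a+1)^2 - (a^2+2a) = 1 \ge 0$. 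This proves $\tfrac{1}{\leb^n(\omega)}\int_\omega|\mu| \le \sqrt{\necc^2 + 2\necc}$. For the final assertion, if $\necc \le 1$ then $\necc^2 + 2\necc \le \necc + 2\necc = 3\necc$, giving $\fint_\omega|\mu| \le \sqrt{3\necc}$.

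I do not expect a genuine obstacle here; the only point requiring mild care is the bookkeeping of the singular part under the convention \eqref{eq:convention} — specifically checking $E^\infty(z)=|z|$ so that $E$ and $|\cdot|$ agree on the singular part — after which everything is Jensen plus an elementary algebraic inequality. (Alternatively, one could avoid the decomposition entirely by working with the lifted measure $\tilde\mu = (\mu,\leb^n\mres\omega)$ from \eqref{eq:tildemu}: then $\int_\omega E(\mu) = |\tilde\mu|(\omega) - \leb^n(\omega)$ and $|\mu|(\omega) \le |\tilde\mu|(\omega)$, and one applies Jensen to the map $t\mapsto\sqrt{t^2-1}$ evaluated at $t = |\tilde\mu|(\omega)/\leb^n(\omega) = 1+\necc$; this gives the bound in one stroke.)
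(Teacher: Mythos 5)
Your proof is correct and complete: the exact identity $|z|^2=E(z)^2+2E(z)$, concavity of $t\mapsto\sqrt{t^2+2t}$ together with Jensen on the absolutely continuous part, the fact that $E^\infty=|\cdot|$ so the singular part enters both sides as $|\mu^s|(\omega)$ under convention \eqref{eq:convention}, and the elementary inequality $\sqrt{a^2+2a}+b\le\sqrt{(a+b)^2+2(a+b)}$ (equivalent to $\sqrt{a^2+2a}\le a+1$) give the bound, and the case $\necc\le1$ follows as you state. The paper gives no proof of its own — the lemma is imported verbatim as Lemma 2.9 of \cite{Kristensen18} — and your argument is essentially the standard one behind that reference, so there is nothing substantive to contrast. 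One small caveat: your parenthetical ``one-stroke'' variant via $\tilde\mu$ is not really one stroke — the correct identity is $\int_\omega E(\mu)=|\tilde\mu|(\omega)-\leb^n(\omega)$ (note that \eqref{eq:tildemu} as printed drops the $\leb^n$ term), the crude bound $|\mu|(\omega)\le|\tilde\mu|(\omega)$ only yields $\fint_\omega|\mu|\le 1+\necc$, and to reach $\sqrt{(1+\necc)^2-1}$ one must still apply Jensen (or Minkowski's integral inequality) to the absolutely continuous part and redo the same singular-part bookkeeping, i.e.\ exactly the argument of your main proof.
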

It's clear that $E$ is strictly convex, the following bounds explicitly show that the ellipticity constants are bounded below on every compact set.
\begin{lemma}[Lemma 4.1 in \cite{Kristensen18}]\label{auxiliary1} For all $y_0,y\in \R^N$ we have
	\begin{align*}
	& E''(y_0)[y,y]=\Big\{1+|y_0|^2-|y_0|^2\big(\frac{y_0}{|y_0|}\cdot\frac{y}{|y|}\big)\Big\}\langle y_0\rangle^{-3} |y|^2;\\
	& E(y+y_0)-E(y_0)-E'(y_0)y\geq 2^{-4}\langle y_0\rangle^{-3}\, E(y).
	\end{align*}
\end{lemma}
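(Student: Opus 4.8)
The plan is to compute the first and second derivatives of $E$ directly and then extract the lower bound from the resulting closed-form expression. Writing $E(y) = \langle y\rangle - 1$ with $\langle y\rangle = (1+|y|^2)^{1/2}$, I would first record $E'(y)[z] = \langle y\rangle^{-1}\, y\cdot z$, so that $E'(y) = y/\langle y\rangle$ as a vector. Differentiating once more, $E''(y)[z,z] = \langle y\rangle^{-1}|z|^2 - \langle y\rangle^{-3}(y\cdot z)^2$. Evaluating at $y = y_0$ and factoring out $\langle y_0\rangle^{-3}|z|^2$ gives $E''(y_0)[z,z] = \langle y_0\rangle^{-3}\bigl\{(1+|y_0|^2)|z|^2 - (y_0\cdot z)^2\bigr\}/|z|^2 \cdot |z|^2$; rewriting $(y_0\cdot z)^2 = |y_0|^2|z|^2(\widehat{y_0}\cdot\widehat z)^2$ with $\widehat{\,\cdot\,}$ denoting unit vectors yields exactly the stated formula $E''(y_0)[z,z] = \bigl\{1 + |y_0|^2 - |y_0|^2(\widehat{y_0}\cdot\widehat z)^2\bigr\}\langle y_0\rangle^{-3}|z|^2$. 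Note in particular that the curly bracket is always $\geq 1$, so $E''(y_0)[z,z]\geq \langle y_0\rangle^{-3}|z|^2$, which is the quantitative ellipticity we will use.

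For the second inequality, set $g(t) := E(y_0 + tz) - E(y_0) - t\,E'(y_0)[z]$ for $t\in[0,1]$, so that $g(0)=g'(0)=0$ and $g''(t) = E''(y_0+tz)[z,z]$. By Taylor's formula with integral remainder, $g(1) = \int_0^1 (1-t)\, E''(y_0+tz)[z,z]\, dt$. Using the ellipticity bound from the first part, $E''(y_0+tz)[z,z]\geq \langle y_0+tz\rangle^{-3}|z|^2$, so the task reduces to bounding $\langle y_0+tz\rangle^{-3}$ from below in terms of $\langle y_0\rangle^{-3}$ and comparing the resulting quantity to $E(y)$ with $y = z$. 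Here I expect to split into the two regimes appearing in \eqref{Emin}: when $|z|\leq 1$ one has $E(z)\simeq |z|^2$ and it suffices to control $\langle y_0+tz\rangle$ by a constant multiple of $\langle y_0\rangle$ on a fixed fraction of $[0,1]$; when $|z|$ is large one uses $E(z)\simeq |z|$ and a slightly more careful estimate, since $\langle y_0 + tz\rangle$ can be as large as $\langle y_0\rangle + |z|$.

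The main obstacle is precisely this last point: for large $|z|$, the naive bound $\langle y_0+tz\rangle^{-3}|z|^2 \gtrsim \langle y_0\rangle^{-3} \cdot |z|^2/(1+|z|/\langle y_0\rangle)^3$ degrades like $\langle y_0\rangle^{-3}\cdot\langle y_0\rangle^3/|z| = 1/|z|$, which is of the right order $E(z)\simeq|z|$ only up to a constant — so one must check the constant $2^{-4}$ survives. I would handle this by integrating $g''$ only over the sub-interval $t\in[0,1/2]$ (discarding the rest, which is legitimate since $g''\geq 0$ by convexity of $E$), on which $|y_0 + tz|\leq |y_0| + |z|/2$, and then doing an elementary case analysis on whether $|z|\le 1$ or $|z|>1$, in each case reducing to a one-variable inequality that can be verified by hand. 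Since this is the kind of explicit constant-chasing already carried out in \cite{Kristensen18} (this is Lemma 4.1 there, with $E$ unchanged), the argument transfers verbatim; I would simply cite that computation rather than reproduce it in full.
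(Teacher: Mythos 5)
Your computation is correct and takes the same route as the source this paper defers to (the lemma is stated here without proof, by citation to Lemma 4.1 of Gmeineder--Kristensen), and your plan does close: with $a=\langle y_0\rangle$, $r=|y|$ one has $\langle y_0+ty\rangle\le a+tr$, $\int_0^1(1-t)(a+tr)^{-3}\,dt=\frac{1}{2a^2(a+r)}$, and $8ar^2/(a+r)\ge\min\{r,r^2\}\ge E(y)$ since $a\ge 1$, so the constant $2^{-4}$ survives comfortably, without even restricting to $t\in[0,1/2]$. One remark: the correct second-derivative identity is the one you derived, with the factor $\big(\frac{y_0}{|y_0|}\cdot\frac{y}{|y|}\big)^2$ squared; the statement as printed omits the square, which is a typo.
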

We turn to similar convexity properties of $f$, assuming that it satisfies assumptions (H1), (H2) and (H3). 

Given $y_0\in \R^N$ we define the linearized functions $E_{y_0}$ and $f_{y_0}$ by the formula:
\begin{align*}
f_{y_0}(y)&:=f(y_0+y)-f(y_0)-f'(y_0).[y]\\
&=\int_{0}^1 (1-t) f''(y_0+ty)[y,y]\, dt.
\end{align*}
\begin{lemma}[Lemma 4.2 in \cite{Kristensen18}]\label{auxiliary2}
	For all $y,y_0\in \R^N$ with  $|y_0|\leq \alpha$, $v\in\R^m$ and $\xi\in\R^n$  we have 
	\begin{align}
	& |f_{y_0}(y)|\lesssim_\alpha L\, E(y),\label{aux1}\\
	& |f'_{y_0}(y)|\lesssim_\alpha L\, \min\{|y|,1\},\label{aux2}\\
	& |f''_{y_0}(0)y-f'_{y_0}(y)|_{\R^N}\lesssim_\alpha L\, E(y),\label{aux3}\\
    & f''_{y_0}(0)[\Bhat[\xi]v,\Bhat[\xi]v] \gtrsim_\alpha \ell |v|^2|\xi|^2.
	\end{align}
    Furthermore, for all $\varphi\in C^1_c(\rn,\R^m)$ it holds 
    \begin{equation}
	\int_\rn  f_{y_0}(\B \varphi(x))\, dx\gtrsim_\alpha \ell \int_\rn E(\B \varphi(x))\, dx.
 \end{equation}
\end{lemma}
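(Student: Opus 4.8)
The plan is to establish the four pointwise bounds \eqref{aux1}--\eqref{aux3} and the Legendre--Hadamard estimate by splitting, in each case, into the regimes $|y|\le1$ and $|y|\ge1$, in which $E(y)$ is comparable to $|y|^2$ and to $|y|$ respectively by \eqref{Emin}; the final integral inequality is then deduced from the strong $\B$-quasiconvexity \eqref{eq:fstrongBqc} together with the explicit estimates for $E$ in \Cref{auxiliary1}. Throughout, the implicit constants in $\lesssim_\alpha$ are understood to depend also on $\B$ and on $f$ (through $f''$), consistently with the dependencies listed in \Cref{thm:epsreg}; the explicit factor $L$ records the genuinely linear scaling that governs $|y|\ge1$. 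Before starting I would record two inputs. First, $E$ is convex, hence $\B$-quasiconvex, so by \eqref{eq:fstrongBqc} the integrand $f=(f-\ell\langle\cdot\rangle)+\ell E+\ell$ is itself $\B$-quasiconvex; being of linear growth \eqref{eq:flingrowth} it is globally Lipschitz, and running the proof of this fact through $f\circ\beta$ and the classical linear-growth bound for rank-one convex integrands gives $\|f'\|_{L^\infty(\R^N)}\lesssim_\B L$. Second, by \eqref{eq:freg} the quantity $\sup_{B_{\alpha+1}}|f''|$ and the Lipschitz constant of $f''$ on the ball $B_{\alpha+1}\subset\R^N$ are finite, and these control all the estimates in the regime $|y|\le 1$.

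For \eqref{aux1}--\eqref{aux3}, if $|y|\ge1$ I would use the crude bounds $|f_{y_0}(y)|\le|f(y_0+y)|+|f(y_0)|+\|f'\|_\infty|y|\lesssim_{\alpha,\B}L(1+|y|)$, then $|f'_{y_0}(y)|\le2\|f'\|_\infty\lesssim_\B L$, and $|f''_{y_0}(0)y-f'_{y_0}(y)|\le|f''(y_0)||y|+|f'_{y_0}(y)|\lesssim_\alpha L|y|$; since $E(y)\gtrsim|y|$ here, these are the claimed estimates. If $|y|\le1$ I would use the integral remainders $f_{y_0}(y)=\int_0^1(1-t)f''(y_0+ty)[y,y]\,dt$, $f'_{y_0}(y)=\int_0^1 f''(y_0+ty)[y,\cdot]\,dt$, and $f''_{y_0}(0)y-f'_{y_0}(y)=\int_0^1\big(f''(y_0)-f''(y_0+ty)\big)[y,\cdot]\,dt$ (legitimate since the segment $[y_0,y_0+y]$ lies in $B_{\alpha+1}$), which, together with the $C^2$ and $C^{2,1}$ bounds on $B_{\alpha+1}$, give $O(|y|^2)$, $O(|y|)$ and $O(|y|^2)$ respectively; since $E(y)\gtrsim|y|^2$ here this finishes \eqref{aux1}--\eqref{aux3}.

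For the Legendre--Hadamard bound, note $f''_{y_0}(0)=f''(y_0)$. As already observed before \Cref{ellipticregconstantcoeff}, $f-\ell\langle\cdot\rangle$ being $C^2$ and $\B$-quasiconvex forces $(f-\ell\langle\cdot\rangle)\circ\beta$ to be rank-one convex, whence $(f-\ell\langle\cdot\rangle)''(y_0)[\Bhat[\xi]v,\Bhat[\xi]v]\ge0$; since $\langle\cdot\rangle''=E''$ this reads $f''(y_0)[\Bhat[\xi]v,\Bhat[\xi]v]\ge\ell\,E''(y_0)[\Bhat[\xi]v,\Bhat[\xi]v]$. By the first identity in \Cref{auxiliary1} and Cauchy--Schwarz, $E''(y_0)[w,w]\ge\langle y_0\rangle^{-3}|w|^2\ge\langle\alpha\rangle^{-3}|w|^2$, and applying this with $w=\Bhat[\xi]v$ and using the ellipticity of $\B$ in the form $|\Bhat[\xi]v|\ge c_\B|\xi||v|$, with $c_\B:=\min\{|\Bhat[\xi]v|:|\xi|=|v|=1\}>0$, gives $f''(y_0)[\Bhat[\xi]v,\Bhat[\xi]v]\gtrsim_{\alpha,\B}\ell|\xi|^2|v|^2$. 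For the final inequality I would fix $\varphi\in C^1_c(\rn,\R^m)$ and first observe that $\int_\rn f'(y_0)[\B\varphi]\,dx=0$, since each summand $f'(y_0)[B_j\de_j\varphi(x)]$ is the $j$-th partial derivative of the compactly supported function $x\mapsto f'(y_0)[B_j\varphi(x)]$; hence $\int_\rn f_{y_0}(\B\varphi)=\int_\rn\big(f(y_0+\B\varphi)-f(y_0)\big)$. A standard rescaling transfers the $\B$-quasiconvexity of $g:=f-\ell\langle\cdot\rangle$ from the unit cube $Q$ to the support of $\varphi$: if $\spt\varphi\subset a+RQ$, then $\psi(z):=R^{-1}\varphi(a+Rz)$ lies in $C^1_c(Q,\R^m)$ and satisfies $\B\psi(z)=(\B\varphi)(a+Rz)$ (the factor $R^{-1}$ compensating the first order of $\B$), so $g(y_0)\le\fint_Q g(y_0+\B\psi)=\fint_{a+RQ}g(y_0+\B\varphi)$, i.e. $\int_\rn\big(g(y_0+\B\varphi)-g(y_0)\big)\ge0$. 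Since $\langle\cdot\rangle=E+1$ this yields $\int_\rn\big(f(y_0+\B\varphi)-f(y_0)\big)\ge\ell\int_\rn\big(E(y_0+\B\varphi)-E(y_0)\big)$, and the second inequality in \Cref{auxiliary1}, namely $E(y_0+\B\varphi)-E(y_0)-E'(y_0)[\B\varphi]\ge2^{-4}\langle y_0\rangle^{-3}E(\B\varphi)$ pointwise, integrated and combined with $\int_\rn E'(y_0)[\B\varphi]\,dx=0$, gives $\int_\rn\big(E(y_0+\B\varphi)-E(y_0)\big)\ge2^{-4}\langle\alpha\rangle^{-3}\int_\rn E(\B\varphi)$; putting the two bounds together gives $\int_\rn f_{y_0}(\B\varphi)\gtrsim_\alpha\ell\int_\rn E(\B\varphi)$.

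I expect the only genuinely non-routine points to be: the quantitative Lipschitz bound $\|f'\|_\infty\lesssim_\B L$, which has to be imported from the theory of (Morrey-)quasiconvex integrands and cannot follow from \eqref{eq:fstrongBqc} alone — indeed $\B$-quasiconvexity does not bound $f''$, which is precisely why the constants must carry a dependence on $f''$; and the rescaling in the quasiconvexity step, where the first order of $\B$ dictates the $R^{-1}$ normalisation of the competitor (the one place where the calculation differs from the $\B=\nabla$ case). Everything else is the two-regime bookkeeping together with \Cref{lem:auxiliaryboundsE} and \Cref{auxiliary1}.
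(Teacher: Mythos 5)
The paper does not prove this lemma at all — it is imported verbatim from \cite{Kristensen18} (Lemma 4.2) — and your argument reconstructs essentially the standard proof given there: two-regime ($|y|\le 1$ versus $|y|\ge 1$) Taylor estimates using the local $C^{2,1}$ bounds on $\overline B_{\alpha+1}$ and the global Lipschitz bound $\|f'\|_{\infty}\lesssim L$ for quasiconvex integrands of linear growth, rank-one convexity of $(f-\ell\langle\cdot\rangle)\circ\beta$ together with the explicit formula for $E''$ in \Cref{auxiliary1} for the Legendre--Hadamard bound, and the rescaled quasiconvexity of the shifted integrand combined with the second inequality of \Cref{auxiliary1} and the null-Lagrangian identity $\int f'(y_0)[\B\varphi]\,dx=0$ for the integral estimate. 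Your proof is correct, including the (indeed necessary) reading that the implicit constants in $\lesssim_\alpha$ also depend on $f''$ on a ball of radius comparable to $\alpha$, which is consistent with the dependencies stated in \Cref{thm:epsreg}.
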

	In particular the last inequality is the $\B$-Legendre-Hadamard ellipticity condition introduced in \eqref{eq:blegendrehadamarddef}.
\section{Proof of Theorem \ref{thm:epsreg}}\label{sec:proofepsreg}
We fix $u\in BV^\B(\Om)$ satisfying the local minimality condition
	\begin{equation}\label{eq:localminimalitycondition}
	\int_\Om f(\B u)\leq \int_\Om f(\B (u+\varphi))\text{ for all }\varphi\in BV^\B_0(\Om),
	\end{equation}
	where the lagrangian $f\colon \R^N\to \R$ satisfies \eqref{eq:flingrowth},\eqref{eq:fstrongBqc} and \eqref{eq:freg}.
\subsection{Euler-Lagrange equation}
We start with a
\begin{lemma}\label{lem:triangularlambdaconvex}
	For all $\lambda,\lambda'\in\Lambda$ we have $f^\infty(\lambda+\lambda')\leq f^\infty(\lambda)+f^\infty(\lambda').$
\end{lemma}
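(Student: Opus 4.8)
The plan is to reduce the subadditivity of $f^\infty$ on the wave cone $\Lambda_\B$ to the $\Lambda_\B$-convexity of $f$ established just above. First I would recall that, since $f$ satisfies \eqref{eq:flingrowth} and \eqref{eq:fstrongBqc}, it is $\B$-quasiconvex (up to the harmless additive term $\ell\langle\cdot\rangle$, which is convex and hence causes no trouble), globally Lipschitz, and $\Lambda_\B$-convex in the sense of the Lemma preceding Proposition \ref{prop:Bqcconsequences}; moreover $f^\infty(y)=\lim_{t\to\infty,y'\to y} f(ty')/t$ exists for $y\in\Span\Lambda_\B$ by Proposition \ref{prop:Bqcconsequences} (the upper and lower recession functions coincide there) and is positively $1$-homogeneous.

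The key observation is that $\Lambda_\B$ is a \emph{cone} (it is a union of ranges of linear maps $\Bhat[\xi]$, hence stable under positive scalar multiplication), so if $\lambda,\lambda'\in\Lambda_\B$ then for each $\xi$ with $\lambda\in\ran\Bhat[\xi]$ we have $t\lambda\in\ran\Bhat[\xi]\subset\Lambda_\B$ for all $t>0$; the same for $\lambda'$. However, the \emph{difference} $\lambda-\lambda'$ need not lie in $\Lambda_\B$, so one cannot directly apply $\Lambda_\B$-convexity to the pair $\{\lambda,\lambda'\}$. Instead I would argue as follows. Fix $t>0$ and apply $\Lambda_\B$-convexity of $f$ to the two points $y:=2t\lambda$ and $y':=2t\lambda'$ — \textbf{only if} $2t\lambda-2t\lambda'=2t(\lambda-\lambda')\in\Lambda_\B$, which again is not guaranteed. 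So the correct route is to exploit homogeneity differently: consider the points $a:=t(\lambda+\lambda')+s\lambda$ and $b:=t(\lambda+\lambda')-s\lambda'$ for suitable $s$ — still their difference is $s(\lambda+\lambda')$, not obviously in $\Lambda_\B$. The cleanest fix is to observe that for $\lambda\in\ran\Bhat[\xi]$, both $t\lambda$ and $t'\lambda$ lie in $\ran\Bhat[\xi]$ and their difference $(t-t')\lambda\in\ran\Bhat[\xi]\subset\Lambda_\B$; hence $\Lambda_\B$-convexity applies along the ray $\R_{>0}\lambda$, giving convexity of $t\mapsto f(t\lambda)$, and similarly along $\R_{>0}\lambda'$. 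But to combine $\lambda$ and $\lambda'$ I would instead use that for each $R>0$, the points $y_R := R\lambda' + \lambda$ and $z_R := R\lambda'$ differ by $\lambda\in\Lambda_\B$, so convexity along that direction yields $f(R\lambda'+\lambda)\le$ a controlled expression; then divide by $R$ and let $R\to\infty$ to produce $f^\infty(\lambda')$ on the right. Iterating the bookkeeping — first peeling off $\lambda'$, then $\lambda$ — and using $1$-homogeneity of $f^\infty$ together with the Lipschitz bound to pass limits through, one arrives at $f^\infty(\lambda+\lambda')\le f^\infty(\lambda)+f^\infty(\lambda')$.

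More precisely, the step I would actually execute: by $\Lambda_\B$-convexity applied to $w+\lambda$ and $w-\lambda$ (whose difference $2\lambda\in\Lambda_\B$) we get $f(w)\le\tfrac12 f(w+\lambda)+\tfrac12 f(w-\lambda)$ for every $w$, i.e. $t\mapsto f(w+t\lambda)$ is convex; likewise in the $\lambda'$ direction. Convexity plus linear growth forces the one-sided derivatives to be monotone and bounded, so $f(w+t\lambda)=f(w)+t\,f^\infty(\lambda)+o(t)$ is \emph{not} quite right, but $\limsup_{t\to\infty}\tfrac{f(w+t\lambda)-f(w)}{t}\le f^\infty(\lambda)$ does hold by convexity (the difference quotient increases to its limit, which is $f^\infty(\lambda)$ by definition and $1$-homogeneity). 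Now write, for $t\to\infty$,
\[
\frac{f\big(t(\lambda+\lambda')\big)}{t}=\frac{f\big((t\lambda')+t\lambda\big)}{t}\le\frac{f(t\lambda')}{t}+\frac{f(t\lambda'+t\lambda)-f(t\lambda')}{t},
\]
and the first term tends to $f^\infty(\lambda')$ while, since $t\mapsto f(t\lambda'+t\lambda)-f(t\lambda')$ along the direction $\lambda$ from base point $t\lambda'$ has difference quotient bounded by $f^\infty(\lambda)$ uniformly in the base point (here I use that the convexity bound on the $\lambda$-directional difference quotient is independent of $w$), the second term is $\le f^\infty(\lambda)+o(1)$. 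Passing to the limit gives the claim. \textbf{The main obstacle} is exactly this uniformity: justifying that the supremal $\lambda$-directional difference quotient of $f$ is controlled by $f^\infty(\lambda)$ uniformly over all base points, which follows from convexity of $t\mapsto f(w+t\lambda)$ together with the linear growth bound \eqref{eq:flingrowth} and the definition of $f^\infty$, but must be written carefully since the recession limit a priori allows $y'\to y$ as well.
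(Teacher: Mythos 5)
Your final argument is correct and is essentially the paper's own proof: both exploit $\Lambda_\B$-convexity along a line whose direction lies in the wave cone, the monotonicity of chord slopes of the resulting one-dimensional convex function, and the existence of the strong recession limit (which tolerates $y'\to y$, or alternatively the global Lipschitz bound) to pass to the limit. The ``uniformity in the base point'' you flag as the main obstacle is automatic: for each fixed $w$ the nondecreasing difference quotient $t^{-1}\big(f(w+t\lambda)-f(w)\big)$ is bounded above by its limit, which equals $f^\infty(\lambda)$ independently of $w$, so the bound holds at every base point with no $o(1)$ needed --- this is exactly what the paper's two-slope inequality along the line $\{t\lambda+s\lambda':s\in\R\}$ (comparing $s=t$ and $s=t^2$) encodes in one line.
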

\begin{proof} We exploit that $f$ is $\Lambda$-convex. Fix any $t>1$ and write the two-slope inequality along the line $\{\lambda+s\lambda': s\in\R\}$
	$$
	\frac{f(t\lambda+t^2\lambda')-f(t\lambda)}{t^2}\geq \frac{f(t(\lambda+\lambda'))-f(t\lambda)}{t},
	$$
we conclude sending $t\to+\infty$, and using the existence of the strong recession function at points in $\Lambda$. \end{proof}
The following Proposition is inspired by \cite[Lemma 2.15]{Kristensen18}.
\begin{proposition}[Euler-Lagrange equation]\label{prop:euler}
	For every $\varphi \in BV^\B_0(\Om)$ we have
	\begin{equation}\label{eq:euler}
	-\int_{\Om } f^\infty(\B \varphi^s)\leq \int_\Om f'\left({\B u^{ac}(x)}\right).\left[\B\varphi^{ac}(x)\right]\, dx\leq \int_\Om f^\infty(-\B \varphi^s).
	\end{equation}
	In particular, using smooth variations we find that in $\Om$
	$$
	\B^*\left[f'\left({\B u^{ac}}\right)\right]=0\quad \text{ in the sense of distributions.}
	$$
\end{proposition}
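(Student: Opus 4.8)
The plan is to derive the two-sided bound \eqref{eq:euler} by perturbing the minimizer $u$ with $\pm s\varphi$ for small $s>0$ and letting $s\downarrow 0$, being careful about the singular part of $\B\varphi$, where the recession function enters. First I would take $\varphi\in BV^\B_0(\Om)$, which by definition can be approximated area-strictly by $\{\varphi_k\}\subset C^\infty_c(\Om,\R^m)$; by area-strict continuity of $\mathcal F[\cdot,\Om]$ (Proposition \ref{prob:Bqcconsequences}, or rather Theorem \ref{thm:areacont} applied to the perspective integrand as in Remark \ref{rmk:areacontimproved}) it will be enough to prove the estimate for smooth compactly supported $\varphi$ and then pass to the limit — here I must check that both sides are area-strictly continuous in $\varphi$, which for the middle term is clear since $f'(\B u^{ac})$ is bounded (by \eqref{aux2}-type bounds, $f$ being Lipschitz) and for the outer terms follows because $f^\infty$ is positively $1$-homogeneous, continuous on $\Span\Lambda_\B$, and $\B\varphi_k\to\B\varphi$ area-strictly means in particular $\B\varphi^s_k \to \B\varphi^s$ in the relevant sense after splitting off the absolutely continuous part. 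Actually it is cleaner to keep $\varphi$ general and test minimality directly: for $s>0$,
\begin{equation*}
0\le \frac{\mathcal F[u+s\varphi,\Om]-\mathcal F[u,\Om]}{s}.
\end{equation*}

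Next I would expand the difference quotient. Writing $\B(u+s\varphi) = \B u^{ac} + s\B\varphi^{ac}$ for the absolutely continuous part and $\B u^s + s\B\varphi^s$ for the singular part (these singular parts need not be mutually singular, but one can still bound the singular contribution from above: since $f^\infty$ is convex and $1$-homogeneous on $\Span\Lambda_\B$, and by \eqref{eq:rankone} the relevant polar vectors lie in $\Lambda_\B$, one has $\int_\Om f^\infty\big(\tfrac{d(\B u^s+s\B\varphi^s)}{d|\cdot|}\big)d|\cdot| \le \int_\Om f^\infty(\B u^s) + s\int_\Om f^\infty(\B\varphi^s)$ using Lemma \ref{lem:triangularlambdaconvex} together with $1$-homogeneity, after writing the measure $\B u^s + s\B\varphi^s$ with respect to a common dominating measure). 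For the absolutely continuous part, $\tfrac1s\big(f(\B u^{ac}+s\B\varphi^{ac}) - f(\B u^{ac})\big) \to f'(\B u^{ac})[\B\varphi^{ac}]$ pointwise a.e., and is dominated by $L|\B\varphi^{ac}|$ since $f$ is Lipschitz with constant $L$; hence dominated convergence gives $\int_\Om \tfrac1s\big(f(\B u^{ac}+s\B\varphi^{ac})-f(\B u^{ac})\big)\,dx \to \int_\Om f'(\B u^{ac})[\B\varphi^{ac}]\,dx$. Combining,
\begin{equation*}
0 \le \int_\Om f'(\B u^{ac})[\B\varphi^{ac}]\,dx + \int_\Om f^\infty(\B\varphi^s) + o(1),
\end{equation*}
which after $s\downarrow 0$ yields the upper bound; replacing $\varphi$ by $-\varphi$ and using that $f^\infty(-\B(-\varphi)^s) = f^\infty(\B\varphi^s)$ gives the lower bound, i.e. the full \eqref{eq:euler}.

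For the last assertion, I would restrict to $\varphi\in C^1_c(\Om,\R^m)$, for which $\B\varphi^s=0$, so \eqref{eq:euler} collapses to $\int_\Om f'(\B u^{ac})[\B\varphi]\,dx = 0$ (both inequalities become equalities since both bounding terms vanish); writing $\B\varphi = \beta(\nabla\varphi)$, equivalently $\langle f'(\B u^{ac}), \B\varphi\rangle = 0$ for all test $\varphi$, which is precisely $\B^*[f'(\B u^{ac})] = 0$ in $\D'(\Om)$. The main obstacle I anticipate is the careful handling of the singular parts: the measures $\B u^s$ and $\B\varphi^s$ are not a priori mutually singular, so one cannot naively add the recession-function integrals; the resolution is to use the $\Lambda_\B$-structure of the polar vectors (via \eqref{eq:rankone} for $\B u^s$ and the fact that $\B\varphi^s$ is automatically $\Lambda_\B$-valued) together with the subadditivity of $f^\infty$ on $\Lambda_\B$ established in Lemma \ref{lem:triangularlambdaconvex}, applied after expressing both singular measures relative to a common reference measure such as $|\B u^s| + |\B\varphi^s|$. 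A secondary point is justifying the interchange of limit and integral in the absolutely-continuous term, which is routine once the linear growth / Lipschitz bound on $f$ is invoked.
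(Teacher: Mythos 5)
Your proposal is correct and follows essentially the same route as the paper: test minimality with $u+s\varphi$, split $\B(u+s\varphi)$ into absolutely continuous and singular parts, control the singular contribution via a common dominating measure (the paper uses $\tau=|\B^s u|+|\B^s\varphi|$) together with \eqref{eq:rankone} and the subadditivity of $f^\infty$ on $\Lambda_\B$ from Lemma \ref{lem:triangularlambdaconvex}, handle the absolutely continuous part by dominated convergence using the Lipschitz bound on $f$, and then replace $\varphi$ by $-\varphi$; the only blemish is that you label the inequality obtained from $+\varphi$ as the ``upper bound'' when it is in fact the lower one in \eqref{eq:euler}, which is immaterial since both directions are covered.
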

\begin{proof}
Let $\varepsilon>0$, notice that by uniqueness of the Lebesgue decomposition of measures we have
\begin{equation*}
    (\B(u+\eps\varphi))^s = (\B u)^s + \eps (\B \varphi)^s,
\end{equation*}
thus we use the singular measure $\tau:=|\B^s u|+|\B^s\varphi|$. By Besicovitch differentiation and \eqref{eq:rankone} we have
$$
\frac{d(\B u)^s}{d\tau}(x)=\frac{d(\B u)^s }{d|(\B u)^s|}(x)\cdot\frac{d|(\B u)^s|}{d \tau}(x)\in\Lambda\quad \text{ for }\tau\text{-a.e. } x\in\Om,
$$
so that, since $f^\infty$ is positively 1-homogeneous
\begin{equation*}
    \int_\Om f(\B u) = \int_\Om f((\B u)^{ac}(x))\, dx +\int_\Om f^\infty\Big(\frac{d(\B u)^s}{d\tau}\Big) \,d\tau.
\end{equation*}
The same holds for $\varphi$ and $u+\eps\varphi$, so by \Cref{lem:triangularlambdaconvex} we find
\begin{align*}
	\int_\Om f^\infty \left(\B( u+\varepsilon  \varphi)^s \right)-f^\infty \left(\B u^s\right)=\int_\Om f^\infty\left(\frac{d\B(u+\varepsilon \varphi)^s}{d\tau }\right)-f^\infty\left(\frac{d\B u^s}{d\tau}\right)\, d\tau	\\
	\leq\int_\Om f^\infty\left(\varepsilon\frac{d\B\varphi^s}{d \tau}\right)\, d\tau =\varepsilon \int_\Om f^\infty\left(\B \varphi^s\right).
\end{align*} 
Combining this inequality with the local minimality condition \eqref{eq:localminimalitycondition} we find
\begin{align*}
    0&\leq\int_\Om f(\B(u+\varepsilon \varphi))-\int_\Om f(\B u)\\
	&=\int_\Om \int_0^1 f'\left(\B u^{ac} +t \varepsilon \B\varphi^{ac}\right)[\varepsilon \B\varphi^{ac}] \, dt\,  dx + \int_\Om f^\infty \left(\B u^s+\varepsilon \B \varphi^s \right)-f^\infty \left(\B u^s\right)\\
	&\leq \varepsilon \int_\Om \Big(\int_0^1 f'\left(\B u^{ac} +t \varepsilon \B\varphi^{ac}\right)\, dt \Big)[\B\varphi^{ac}] \, dx + \varepsilon \int_\Om f^\infty\left(\B \varphi^s\right).	
\end{align*}
Since $f$ is globally Lipschitz, sending $\varepsilon\to 0^+$ and using the dominated convergence theorem we find
$$
    \int_\Om f'\left({\B u^{ac}}\right).\left[\B \varphi^{ac}\right]\,\geq - \int_\Om f^\infty\left(\B \varphi^s\right).
$$
Using as test function $-\varphi$ we get the opposite inequality and thus \eqref{eq:euler}.
\end{proof}
\subsection{Caccioppoli inequality}
The next step consists in establishing a nonlinear Caccioppoli inequality, combining the local minimality condition, the strong $\B$-quasiconvexity, and Widman's ``hole filling" trick. This technique goes back to Evans \cite{Evans1986}.

\begin{proposition}[Caccioppoli inequality]\label{caccioppoli}
Fix a threshold $\alpha>0$. For every $a\in \aff(\B,B_R(x_0))$ with $B_R(x_0)\Subset \Om$ and $|\B a|\leq \alpha$, we have
\begin{equation}
\int_{B_{R/2}(x_0)}E(\B(u-a))\lesssim_{\alpha,n,\B,L/\ell}  \int_{B_R(x_0)}E\left(\frac{u-a}{R}\right).
\end{equation}
\end{proposition}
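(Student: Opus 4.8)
The plan is to follow Widman's hole-filling scheme adapted to the $BV^\B$ setting. Fix $a\in\aff(\B,B_R(x_0))$ with $|\B a|\le\alpha$ and, after translating, assume $x_0=0$ and write $B_\rho=B_\rho(x_0)$. Set $v:=u-a\in BV^\B(B_R)$, so $\B v=\B u-(\B a)\leb^n$ and $y_0:=\B a$ is a constant with $|y_0|\le\alpha$. First I would pick, for $R/2<s<t<R$, a cutoff $\eta\in C_c^1(B_t)$ with $\eta\equiv1$ on $B_s$, $0\le\eta\le1$ and $|\nabla\eta|\lesssim(t-s)^{-1}$, and split $v=\varphi+\psi$ where $\varphi:=\eta v$ and $\psi:=(1-\eta)v$; note $\varphi\in BV^\B_0(B_R)$ (it has compact support in $B_t$, and lies in the area-strict closure of $C_c^\infty$ by a mollification argument using \Cref{lem:approximationBVB}, choosing a good radius $t$ via \Cref{lem:goodradius} so that $|\B v|(\de B_t)=0$ and no mass is lost). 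The key input is the linearized integrand: by the strong $\B$-quasiconvexity \eqref{eq:fstrongBqc}, for $\varphi\in C_c^1$ one has $\int f_{y_0}(\B\varphi)\ge\ell\int E(\B\varphi)$, i.e. the last inequality of \Cref{auxiliary2}, and this extends to $\varphi\in BV^\B_0$ by area-strict continuity (\Cref{prop:Bqcconsequences}, \Cref{rmk:areacontimproved}), bearing in mind that $f_{y_0}$ inherits linear growth and that $E$ is area-strictly continuous.

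Next I would compare energies: by local minimality \eqref{eq:localminimalitycondition} applied to the competitor $u-\varphi=a+\psi$ (legitimate since $\varphi\in BV^\B_0(\Om)$ after extending by zero, using $B_t\Subset\Om$), one gets $\int_{B_R}f(\B u)\le\int_{B_R}f(\B u-\B\varphi)$. Writing everything in terms of $f_{y_0}$ around $y_0=\B a$, using $\B u=y_0+\B v$ and $\B v=\B\varphi+\B\psi$, and using that $f_{y_0}$ is bounded above by $L\,E(\cdot)$ up to a constant depending on $\alpha$ (estimate \eqref{aux1}), while $f_{y_0}(\B\varphi)\ge\ell\, E(\B\varphi)$ from the previous paragraph — the first-order terms $f'(y_0)$ cancel since they are linear and $\B u-\B(u-\varphi)=\B\varphi$ integrates against a constant to the total measure $\B\varphi(B_R)=0$ — I arrive at an inequality of the shape
\begin{equation*}
\ell\int_{B_R}E(\B\varphi)\lesssim_\alpha L\int_{B_R}E(\B\psi)+L\int_{B_R}\big|\,E_{\mathrm{err}}\,\big|,
\end{equation*}
where the error collects the difference between $f_{y_0}(\B v)$ and $f_{y_0}(\B\varphi)+f_{y_0}(\B\psi)$ on the annulus $B_t\setminus B_s$; on $B_s$ one has $\B\varphi=\B v$, $\B\psi=0$, and on $B_R\setminus B_t$ one has $\B\varphi=0$, $\B\psi=\B v$, so all error terms are supported in the annulus. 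Then I use the Leibniz rule \eqref{eq:Bleibniz}: $\B\varphi=\eta\B v+\Bhat[\nabla\eta]v$ and $\B\psi=(1-\eta)\B v-\Bhat[\nabla\eta]v$, so on the annulus both are controlled by $|\B v|+|v|/(t-s)$; using the quasi-triangular and quasi-homogeneity bounds for $E$ (\eqref{quasitriangular}, \eqref{quasihomogeneity} from \Cref{lem:auxiliaryboundsE}), every annular term is bounded by $C_\alpha\int_{B_t\setminus B_s}\big(E(\B v)+E(v/(t-s))\big)$.

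Putting this together and adding $c_\alpha\int_{B_t\setminus B_s}E(\B v)$ to both sides (the hole-filling step), using $\int_{B_s}E(\B v)\le\int_{B_R}E(\B\varphi)$, yields
\begin{equation*}
\int_{B_s}E(\B v)\le\theta\int_{B_t}E(\B v)+C_\alpha\int_{B_R}E\!\left(\frac{v}{t-s}\right)
\end{equation*}
for some $\theta=\theta(\alpha,n,\B,L/\ell)\in(0,1)$ — this is where the ratio $L/\ell$ enters, and getting $\theta<1$ genuinely requires the hole-filling trick, it does not follow from a naive estimate. The main obstacle is precisely this bookkeeping: one must ensure the constant in front of the "bad" term $\int_{B_t}E(\B v)$ can be made $<1$ after absorbing, which forces a careful split of the competitor and the use of the sharp comparison $f_{y_0}(\B\varphi)\ge\ell E(\B\varphi)$ rather than a cruder lower bound, and one must handle the fact that $v/(t-s)$ appears, so $E(v/(t-s))$ is not homogeneous — here $R\le 100$, or rather keeping $t-s\sim R$ up to the final iteration, plus \eqref{quasihomogeneity}, keeps the constants uniform. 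I would finish by applying the standard iteration lemma (Giaquinta–Giusti type: if $g(s)\le\theta g(t)+A(t-s)^{-\kappa}+B$ for $R/2\le s<t\le R$ with $\theta<1$, then $g(R/2)\lesssim A R^{-\kappa}+B$) to $g(\rho):=\int_{B_\rho}E(\B v)$, absorbing the iteration-generated factors into the constant and using $E(v/(t-s))\lesssim_\alpha E(v/R)$ for the radii that occur, to conclude $\int_{B_{R/2}}E(\B(u-a))\lesssim_{\alpha,n,\B,L/\ell}\int_{B_R}E((u-a)/R)$.
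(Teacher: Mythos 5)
Your plan is correct and follows essentially the same route as the paper's proof: linearize around $y_0=\B a$ via the shifted integrand $f_{y_0}$, test the minimality with the cutoff competitor, obtain the coercive lower bound from strong $\B$-quasiconvexity by mollification and area-strict continuity, control the annulus through the Leibniz rule \eqref{eq:Bleibniz} together with \eqref{quasihomogeneity}--\eqref{quasitriangular}, fill the hole, and iterate. Only two wording slips, not gaps: the hole is filled by adding a multiple of $\int_{B_s}E(\B v)$ (not of the annulus integral) to both sides, and the inhomogeneous term should be handled as $E\big(v/(t-s)\big)\le \big(R/(t-s)\big)^2E(v/R)$ via \eqref{quasihomogeneity} (the reverse of the inequality you wrote), which is exactly what the paper's iteration Lemma \ref{discreteiterationlemma} with its doubling $\varPsi$ absorbs; also, the fact that $|\B v|(\de B_t)=0$ for a.e.\ $t$ is elementary and does not require \Cref{lem:goodradius}.
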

\begin{proof}
    Assume by simplicity $x_0=0$. Set $y_0:=\B a$, $\tilde u := u-a$ and $ f_{y_0}(y):= f(y+y_0)-f(y_0)-f'(y_0).y$.
	
	\textbf{Step 1.} For every $\varphi\in BV^\B_c(\Om)$ there holds
	\begin{equation}\label{shiftedmin}
		\int_\Om f_{y_0}(\B \tilde u+\B \varphi )\geq \int_\Om f_{y_0}(\B \tilde u).
	\end{equation}
	Just subtract to the local minimality condition 
	$$
		\int_\Om f(y_0 +\B \tilde u+\B \varphi )\geq \int_\Om f(y_0 +\B \tilde u),
	$$
 the identity
	$$
	\int_\Om f(y_0)+\int_\Om f'(y_0).[\B \varphi +\B\tilde u]=\int_\Om f(y_0) +\int_\Om f'(y_0)[\B \tilde u].
	$$
 Since $y_0\in \R^N$ lives in the ball of radius $\alpha$ the bounds of Lemmas \ref{auxiliary1} and \ref{auxiliary2} are available.
 
 Just as in the classical Caccioppoli inequality we use as test function the solution itself. We choose two balls $B_s\subset B_t$, with radii $R/2<s<t<R$, and a cutoff function $\bm 1_{B_s}\leq \eta\leq \bm 1_{B_t}$ smooth and satisfying $|\nabla \eta|\leq 2/(t-s)$. 
 
 \textbf{Step 2.} We use \eqref{shiftedmin} with $\varphi:=-\eta\tilde u$ and  \eqref{aux1}, 
	\begin{align}\label{caccupper}
	\int_{B_t} f_{y_0}(\B \tilde u) \leq \int_{B_t} f_{y_0}(\B \tilde u + \B \varphi)=\int_{B_t\setminus B_s} f_{y_0}(\B \tilde u + \B \varphi)\lesssim_\alpha L  \int_{B_t\setminus B_s}E(\B v)
	\end{align}
	where we set $v:=(1-\eta)\tilde u =\tilde u +\varphi$.
	
	\textbf{Step 3.} Let $\varrho_\varepsilon$ be a family of standard smooth mollifiers. Since $f_{y_0}$ is strongly $\B$-quasiconvexity at $y=0$ (remember that $ f_{y_0} (0)=E(0)=0$) we find
	$$
	0\leq \int_{B_t} f_{y_0}(\B(\eta(\tilde u* \varrho_\varepsilon)))-\ell E(\B(\eta(\tilde u*\varrho_\varepsilon))).
	$$
	Since $\spt \eta\subset B_t$, Lemma \ref{lem:approximationBVB} implies that $\B(\eta(\tilde u* \varrho_\varepsilon))\areastrict -\B\varphi$ in $B_t$, as $\varepsilon\to 0^+$. Thus by area-strict continuity we find
	\begin{equation}\label{cacclower}
	\ell \int_{B_s} E(\B \tilde u)\leq\ell \int_{B_t}E(\B \varphi)\leq \int_{B_t} f_{y_0}(\B\tilde u- \B v).
	\end{equation}
	\textbf{Step 4.} There exist a constant $\theta=\theta({\alpha,n,\B,L/\ell})\in(0,1)$ such that
	\begin{equation}\label{caccdecay}
		\int_{B_s}E(\B \tilde u)\leq \theta \int_{B_t} E(\B \tilde u)+\theta \int_{B_R}E\left(\frac{\tilde u}{t-s}\right).
	\end{equation}
	In order to prove this we link \eqref{caccupper} and \eqref{cacclower}, in the following way
	\begin{align*}
	\tag{by \ref{cacclower}}
		\ell \int_{B_s} E(\B \tilde u)&\leq \int_{B_t} f_{y_0}(\B\tilde u- \B v)\\
		\tag{by \ref{aux2}}
		&\lesssim_\alpha \int_{B_t} f_{y_0}(\B \tilde u)+ L \int_{B_t}E(\B v)\\
		\tag{by \ref{caccupper}}
		& \lesssim_\alpha L  \int_{B_t\setminus B_s}E(\B v)+ L \int_{B_t\setminus B_s}E(\B v)\\
		\tag{by Leibnitz}
		&\approx L\int_{B_t\setminus B_s}E\left((1-\eta)\B \tilde u +\Bhat[\nabla\eta]\tilde u\right) \\
		\tag{by \ref{quasihomogeneity} and \ref{quasitriangular}}
		&\lesssim_{\B} L \int_{B_t\setminus B_s}E(\B \tilde u) + L \int_{B_t\setminus B_s}E\left(\frac{\tilde u}{t-s}\right).
	\end{align*}
	In follows that there exist $c=c({\alpha,L/\ell,\B})$ such that
	$$
	\int_{B_s} E(\B \tilde u)\leq c\, \int_{B_t\setminus B_s}E(\B \tilde u) + c\,  \int_{B_R}E\left(\frac{\tilde u}{t-s}\right),
	$$
	next we fill the hole adding to both sides the term $c\, \int_{B_s}E(\B \tilde u)$. Setting $\theta:=\frac{c}{c+ 1 }$ we find \ref{caccdecay}.
	
	\textbf{Step 5.} The conclusion follows iterating Lemma \ref{discreteiterationlemma} below with 
	$$\varPhi(r):=\int_{B_r}E(\B \tilde u)\text{ and } \varPsi(t):=\int_{B_R}E\left({\tilde u /t}\right),$$
	once we notice that with this choice \eqref{caccdecay} is assumption \eqref{eq:interationlemmaassumption} and $\varPsi(h/2)\leq 4\varPsi(h)$ because of \eqref{quasihomogeneity}.
	\end{proof}
\begin{lemma}\label{discreteiterationlemma}
	Let $\varPhi,\varPsi:(0,R]\to \R^+$ such that $\varPhi$ is increasing, $\varPsi$ is decreasing, $\varPsi(h/2)\leq 4\ \varPsi(h)$ for every $h>0$ and 
	\begin{equation}\label{eq:interationlemmaassumption}
	    	\varPhi(s)\leq \theta\,  \varPhi(t)+\theta\, \varPsi(t-s)\text{ for all }R/2\leq s<t\leq R.
	\end{equation}
	Then we have $\varPhi(R/2)\lesssim_\theta \varPsi(R)$.
	\end{lemma}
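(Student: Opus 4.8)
The plan is a hole-filling iteration along a sequence of radii interpolating between $R/2$ and $R$, the crucial point being that the step sizes must decay geometrically at a rate close to $1$ rather than like $2^{-k}$, so that the at-most-quadratic blow-up of $\varPsi$ near $0$ does not destroy convergence of the resulting series.

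First I would fix $\lambda\in(\sqrt\theta,1)$, for instance $\lambda:=\tfrac12(1+\sqrt\theta)$, and set $\rho_k:=R-\tfrac R2\lambda^k$ for $k\ge0$, so that $\rho_0=R/2$, $\rho_k\uparrow R$, $R/2\le\rho_k<\rho_{k+1}\le R$, and $\rho_{k+1}-\rho_k=\tfrac R2(1-\lambda)\lambda^k$. Applying \eqref{eq:interationlemmaassumption} with $s=\rho_k$, $t=\rho_{k+1}$ gives $\varPhi(\rho_k)\le\theta\,\varPhi(\rho_{k+1})+\theta\,\varPsi(\rho_{k+1}-\rho_k)$; iterating this recursion and using that $\varPhi$ is increasing yields, for every $N\ge1$,
\begin{equation*}
\varPhi(R/2)\le\theta^N\varPhi(\rho_N)+\sum_{k=0}^{N-1}\theta^{k+1}\varPsi(\rho_{k+1}-\rho_k)\le\theta^N\varPhi(R)+\sum_{k=0}^{N-1}\theta^{k+1}\varPsi(\rho_{k+1}-\rho_k).
\end{equation*}
Since $\varPhi(R)<\infty$ and $\theta\in(0,1)$, letting $N\to\infty$ kills the first term and leaves $\varPhi(R/2)\le\sum_{k\ge0}\theta^{k+1}\varPsi(\rho_{k+1}-\rho_k)$.

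It then remains to bound each $\varPsi(\rho_{k+1}-\rho_k)$ by a multiple of $\varPsi(R)$. Iterating $\varPsi(h/2)\le4\,\varPsi(h)$ together with the monotonicity of $\varPsi$ shows that $\varPsi(ch)\le 4\,c^{-2}\varPsi(h)$ for every $c\in(0,1)$ and $h\in(0,R]$ (choose $j$ with $2^{-j}\le c$ and iterate at most $\log_2(1/c)+1$ times). Taking $h=R$ and $c=c_k:=\tfrac12(1-\lambda)\lambda^k$ gives $\varPsi(\rho_{k+1}-\rho_k)\le\frac{16}{(1-\lambda)^2}\lambda^{-2k}\varPsi(R)$, whence
\begin{equation*}
\varPhi(R/2)\le\frac{16\theta}{(1-\lambda)^2}\,\varPsi(R)\sum_{k\ge0}\Big(\frac{\theta}{\lambda^2}\Big)^{k}.
\end{equation*}
The series converges exactly because $\lambda^2>\theta$, and its sum, along with $(1-\lambda)^{-2}$, depends only on $\theta$; this is the asserted estimate $\varPhi(R/2)\lesssim_\theta\varPsi(R)$.

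I expect the only genuinely subtle point to be this choice of decay rate: the naive dyadic interpolation $\rho_{k+1}-\rho_k\sim2^{-k}R$ forces a factor $\theta^k4^k$ into the series, which diverges unless $\theta<1/4$, so one has to take $\lambda$ strictly between $\sqrt\theta$ and $1$ in order to balance the contraction $\theta^k$ against the growth $\lambda^{-2k}$ of $\varPsi$ along the telescoped radii. Everything else is routine bookkeeping.
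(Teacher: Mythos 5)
Your proof is correct, and it is exactly the standard hole-filling iteration argument that the paper has in mind (the lemma is stated there without proof): interpolating radii with geometrically decaying gaps at rate $\lambda\in(\sqrt\theta,1)$, iterating \eqref{eq:interationlemmaassumption}, and absorbing the at-most-quadratic blow-up $\varPsi(ch)\le 4c^{-2}\varPsi(h)$ coming from the doubling hypothesis into the convergent series $\sum_k(\theta/\lambda^2)^k$. The only thing worth making explicit is that $\theta\in(0,1)$ (as it is in the application, where $\theta=c/(c+1)$), since both the vanishing of $\theta^N\varPhi(R)$ and the admissibility of your choice of $\lambda$ rely on it.
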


\subsection{Linearisation}
We fix from now on an exponent $p\in(1,(n+1)/n)$, say
\begin{equation}\label{eq:pchoiche}
    p:=\frac{2n+1}{2n}.
\end{equation}
Then we have the following harmonic replacement Lemma for good spheres.

\begin{proposition}[Linearisation]\label{linearisation}
	 Fix $\alpha>0$ and $1< q<n/(n-1)$. Then
	\begin{itemize}
		\item for every $B_R(x_0)\Subset \Om$ such that $|\B u|(\de B_R)=0$ and $\de B_R$ is a {good sphere for }$u$ in the sense of \Cref{lem:goodradius},
		\item for every $a\in \aff(\B,U)$ such that $|\B a|\leq \alpha$, where ${B}_R\Subset U\subset\Om$,
	\end{itemize}
	there exist a unique $h\in W^{1,p}(B_R,\R^m)$ which solves the system
	\begin{equation}\label{lin1}
	\begin{cases}
	\B^*\left(f''(\B a).\B h\right)=0 & \text{ in the sense of distributions,}\\
	\tr_{\de B_R(x_0)}h=\tilde u & \mathcal{H}^{n-1}\text{-a.e. on }\de B_R(x_0),
	\end{cases}
	\end{equation}
    where $\tilde u$ denotes the precise representative of $u$.
	Furthermore, $h$ satisfies
	\begin{align}\label{lin2}
	\|\nabla h-\nabla a\|_{L^p(B_R,\R^m)}&\lesssim_{{\alpha,p,\B,L/\ell}} \, \left[{\tilde u-a}\right]_{W^{1-1/p,p}(\de B_R,\R^m)},\\
	\label{lin3}
	\int_{B_R} E\Big(\frac{u-h}{R}\Big)\, dx& \lesssim_{\alpha,p,q,\B,L/\ell} R^{n(1-q)} \left(\int_{B_R} E(\B (u-a))\right)^q.
\end{align}
\end{proposition}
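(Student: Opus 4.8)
The plan is to reduce the existence of $h$ and the estimate \eqref{lin2} directly to Theorem \ref{ellipticregconstantcoeff}(a) with $Q:=f''(\B a)$, which is $\B$-Legendre--Hadamard elliptic with constants controlled by $\ell,\alpha,\B$ (by the last inequality of Lemma \ref{auxiliary2}) and by $L/\ell,\alpha$ from above (by (H1),(H3)). The one delicate point is that the boundary datum must actually lie in $W^{1-1/p,p}(\de B_R,\R^m)$: this is exactly where the ``good sphere'' hypothesis enters. By Lemma \ref{lem:embeddingsBVB} (with our choice $p<(n+1)/n$) we have $u\in W^{1-1/p,p}(B_1)$ after scaling, and then part (c) of Lemma \ref{lem:goodradius} guarantees that for the chosen radius $R$ the restriction $\tilde u|_{\de B_R}$ has finite $W^{1-1/p,p}(\de B_R)$ seminorm, with the precise representative of $u$ as trace; subtracting the smooth function $a$ preserves this. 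So $h$ exists, is unique, is smooth in $B_R$, and \eqref{lin2} is the estimate in Theorem \ref{ellipticregconstantcoeff}(a) combined with the Poincaré-type bound \eqref{eq:poincareWsp} for $[\tilde u-a]_{W^{1-1/p,p}(\de B_R)}$ (after choosing $a$ to be the best $\B$-affine approximation, or simply invoking that $a$ is given).

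Next I would prove the energy comparison \eqref{lin3}. Write $w:=u-h$; since $\tr_{\de B_R}w=0$ (here again the good-sphere property (b) identifies traces of the mollifications with the precise representative, so that $w\in BV^\B_0(B_R)$ after the usual approximation), $w$ is an admissible competitor both for the minimality of $u$ and for the (linearized, shifted) functional. The strategy is the standard linearization estimate: test the Euler--Lagrange equation \eqref{eq:euler} for $u$ and the weak equation $\B^*(f''(\B a).\B h)=0$ for $h$ against $\varphi=w$, subtract, and Taylor-expand $f'$ around $\B a$ to second order using (H3); the quadratic term reproduces (a multiple of) $\int_{B_R} f''(\B a)[\B w,\B w]\gtrsim \ell\int E(\B w)$ by $\B$-Legendre--Hadamard ellipticity together with Lemma \ref{auxiliary1} (to pass from the quadratic form to $E$), and the error terms are controlled by $\int_{B_R} E(\B(u-a))$ times higher-order factors via the auxiliary bounds \eqref{aux1}--\eqref{aux3} of Lemma \ref{auxiliary2}. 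This yields $\int_{B_R}E(\B w)\lesssim_{\alpha,\B,L/\ell}\big(\int_{B_R}E(\B(u-a))\big)\cdot o(1)$, but to get the superlinear gain $\big(\int E(\B(u-a))\big)^q$ one iterates/absorbs and then invokes the sub-quadratic behaviour of $E$ near $0$ (\eqref{Emin}) together with the higher integrability $BV^\B\subset L^q_{\mathrm{loc}}$ for $q<n/(n-1)$; finally one converts $\int_{B_R}E(\B w)$ into $\int_{B_R}E((u-h)/R)$ using Poincaré \eqref{eq:poicnareL1} (noting $w$ has zero trace, so the kernel term vanishes) and the scaling factor $R^{n(1-q)}$.

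The main obstacle I anticipate is the trace identification: asserting that $\tr_{\de B_R}u=\tilde u$ and, dually, that $u-h$ belongs to $BV^\B_0(B_R)$ so that it is a legitimate test function in the minimality inequality \eqref{eq:localminimalitycondition}. In the full-gradient case this is immediate from the $BV$ trace theorem, which is unavailable here (Ornstein); the substitute is precisely the Fubini argument of Lemma \ref{lem:goodradius}, which on the good sphere lets the mollifications $u*\phi_\eps$ converge to $\tilde u$ in $L^p(\de B_R)$, hence (after cutting off) exhibits $u-h$ as an area-strict limit of compactly supported smooth maps. Once this is in place, everything else is the routine Campanato-style linearization already carried out in \cite{Kristensen18} with $\nabla$ replaced by $\B$ and Korn-type manipulations replaced by the Leibniz rule \eqref{eq:Bleibniz} and the $\A$--$\B$ machinery.
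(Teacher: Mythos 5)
The first half of your argument (existence, uniqueness, smoothness of $h$ and \eqref{lin2}) is essentially the paper's: shift by $a$, check that the good-sphere hypothesis together with Lemma \ref{lem:embeddingsBVB} and Lemma \ref{lem:goodradius}(c) puts $\tilde u|_{\de B_R}$ in $W^{1-1/p,p}(\de B_R)$, and invoke Theorem \ref{ellipticregconstantcoeff}(a) for the constant-coefficient form $f''(\B a)$. That part is fine (the extra appeal to \eqref{eq:poincareWsp} is unnecessary, since \eqref{lin2} already has $[\tilde u-a]_{W^{1-1/p,p}(\de B_R)}$ on the right).

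The proof of \eqref{lin3}, however, has a genuine gap. Your plan is to test the two Euler--Lagrange equations with $\varphi=w=u-h$ itself, get $\int_{B_R} f''(\B a)[\B w,\B w]\gtrsim \ell\int E(\B w)$, absorb error terms, and then pass from $E(\B w)$ to $E((u-h)/R)$ by Poincar\'e. This fails at two points. First, subtracting the equations and testing with $w$ produces an error of the schematic form $\int_{B_R}E(\B\tilde u)\,|\B w|$, where $\B w$ is merely a measure with no $L^\infty$ (or even $L^{q'}$) control, so this term cannot be absorbed; and the asserted superlinear gain $\big(\int E(\B(u-a))\big)^q$ does not come out of ``iteration and higher integrability'' --- you never exhibit a mechanism producing the exponent $q>1$. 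Second, even granting an estimate on $\int E(\B w)$, the conversion to $\int E((u-h)/R)$ via \eqref{eq:poicnareL1} goes through $\|w\|_{L^1}\lesssim R\,|\B w|(B_R)$, and passing from the total variation $|\B w|$ back to $\int E(\B w)$ costs a square root in the small-excess regime (Lemma \ref{sqaureroot}), so at best you would land on an exponent $q/2$, which is useless for the decay iteration. The paper avoids both problems by a duality (calibration) argument: after rescaling, it solves the auxiliary system $-\B^*(f''_{y_0}(0).\B\Phi_0)=T(V)$ with the truncation $T$, obtains $\|\nabla\Phi_0\|_{L^\infty}\lesssim\big(\int_{\mathbf B}E(V)\big)^{1/q'}$ from Theorem \ref{ellipticregconstantcoeff}(b) and Morrey, and tests the perturbed equation \eqref{lin4} with $\Phi_0$ rather than with $w$; the good-sphere properties (Lebesgue points, $L^{p}$ convergence of mollifications on $\de B_R$, $|\B u|(\de B_R)=0$) are used precisely to kill the boundary term when identifying $\int\min\{|V|,|V|^2\}$ with $\int f''_{y_0}(0)[\B\Phi_0,\B V]$, and dividing by $\big(\int E(V)\big)^{1/q'}$ yields the exponent $q$ directly on $\int E((u-h)/R)$, never on $\int E(\B(u-h))$. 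Your trace-identification discussion is in the right spirit, but without the truncated auxiliary problem and the $L^{q'}$--$C^{1,\beta}$ estimate the key inequality \eqref{lin3} is not reached.
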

\begin{proof}
	We set for brevity $B:=B_R(x_0)$. Set $y_0:=\B a$ and recall that by Lemma \ref{auxiliary2}
	$$
		| f ''(0).y- f'(y)|\lesssim_\alpha L\, E(y)\text{ for all }y\in \R^N.
	$$
	\textbf{Step 1.} Set $\tilde u:=u-a\in BV^\B_{loc}(U)$ and notice that, as in the first step of Proposition \ref{caccioppoli}, it satisfies the local minimality condition
	$$
	\int_U f_{y_0}(\B \tilde u)\leq \int_U f_{y_0}(\B \tilde u+\B \varphi) \text{ for all }\varphi\in BV^\B_c(U),
	$$
    so applying \Cref{prop:euler} to $\tilde u$ we find 
	\begin{equation}\label{eulerutilde}
		\B^*\left(f'_{y_0}(\B ^{ac} \tilde u(x))\right)=0\qquad  \text{ weakly in }U.
	\end{equation}
	\textbf{ Step 2.} We have
	\begin{equation}\label{linearisationupper}	
\int_B  f''_{y_0}(0)[\B \tilde u,\B \varphi(x)]\lesssim_{\alpha,L}\int_B E(\B\tilde u)\,|\B \varphi(x)|\text{ for all }\varphi\in C^\infty_c(B,\R^m).
	\end{equation} 
	Indeed, using \ref{eulerutilde}:
	\begin{align*}
			\int_B  f''_{y_0}(0)[\B \tilde u,\B \varphi(x)]&=\int_B f''_{y_0}(0)[\B^s \tilde u,\B \varphi(x)] +\int_B  f''_{y_0}(0)[\B^{ac}\tilde u(x),\B\varphi(x)]\\
		&=\int_B f''_{y_0}(0)[\B^s \tilde u,\B \varphi(x)]  \\
		&\qquad\qquad+\int_B \left( f''_{y_0}(0).\B^{ac}\tilde u(x) -f'_{y_0}(\B^{ac}\tilde u(x))\right)\cdot\B \varphi(x)\, dx\\
		\tag{by \ref{aux3}}&\lesssim_{\alpha,L}  \int_B |\B^s \tilde u||\B \varphi|+ \int_B  E(\B^{ac}\tilde u)|\B \varphi|\, dx\\
		&=\int_B E(\B\tilde u)|\B\varphi|
	\end{align*}
	\textbf{ Step 3.} Let $\tilde h \in W^{1,p}(B,\R^m)$ be the solution of\footnote{Existence and uniqueness are assured by Theorem \ref{ellipticregconstantcoeff}, part $(a)$ and the fact that $\tilde u \in W^{1-1/p,p}(\de B,\R^m)$ since $a\in C^\infty(U)$.}
	$$
	\begin{cases}
	\B^*\big( f''_{y_0}(0).\B \tilde h\big)=0 & \text{ in }B,\\
	\tilde h=\restricts{ \tilde u}{\de B} & \text{ on }\de B.
	\end{cases}$$
	In particular we have 
	\begin{equation}\label{eulerhtilde}\int_B  f''_{y_0}(0)[\B \tilde h,\B\varphi]\, dx=0 \text{ for all }\varphi \in \D(B).
	\end{equation}
	By part (b) of \Cref{ellipticregconstantcoeff} it also holds
	\begin{equation*}
			\|\nabla\tilde h\|_{L^p(B,\R^m)}\lesssim_{\alpha,\B} [\tilde u]_{W^{1-1/p,p}(\de B,\R^m)}.
	\end{equation*}
    $W^{1,p}$-extend $\tilde h$ to $U$ and set $h:=\tilde h +a$, the last inequality proves \eqref{lin2}.

 	\textbf{Step 4.} We now want to study the size of the error $v:=\tilde u -\tilde h = u-h\in BV^{\B}_\loc(U)$. Subtracting equations \ref{linearisationupper} and \ref{eulerhtilde} we get that $v$ satisfies
	\begin{equation}\label{lin4}
	\int_B  f''_{y_0}(0)[\B v,\B\varphi]\lesssim_{\alpha,L}\int_B E(\B\tilde u)|\B\varphi| \text{ for all }\varphi \in C^\infty_c(B).
	\end{equation} 
Actually, for all $\beta>0$ \eqref{lin4} holds for every $\varphi\in W^{1,\infty}_0\cap C^{1,\beta}(B,\R^m)$. In fact both sides of \eqref{lin4} are continuous under the convergence
$$
``\B\varphi_k(x)\to \B\varphi(x) \text{ and } |\B\varphi_k(x)|\leq M,\quad\text{ for every }x\in B",
$$
and we can approximate (under this convergence) any such $\varphi$ with smooth functions by extending it to zero outside $B$, mollifying and shrinking back the support inside the ball.

\textbf{Step 5.} In this step we provide a calibration field that will quickly give the last estimate \ref{lin3}. We shift everything back to the unit ball $\mathbf{B}$, put for $x\in\mathbf{B}$
	$$
	V(x):=\frac{1}{R} v(x_0+Rx),\quad \Phi(x):=\frac{1}{R}\varphi(x_0+Rx),\quad \tilde U(x):=\frac{1}{R}\tilde u(x_0+Rx).
	$$ 
	Then \ref{lin4} becomes
	\begin{equation}\label{lin4strong}
	\int_{\mathbf{B}} f''_{y_0}(0)[\B V,\B\Phi]\lesssim_{\alpha,L}\int_{\mathbf{B}} E(\B\tilde U)|\B\Phi|  \text{ for all }\Phi \in W^{1,\infty}_0\cap C^{1,\beta}(\mathbf {B},\R^m).
	\end{equation}
	We choose $\Phi=\Phi_0$ which is the solution of
	\begin{equation}\label{calibration}
	\begin{cases}
	-\B^*\left( f''_{y_0}(0).\B \Phi_0\right)=T(V) & \text{ in }\mathbf{B},\\
	\Phi_0=0 & \text{ on }\de \mathbf{B},
	\end{cases}
	\end{equation}
	here $T(v)=v$ for $|v|\leq 1$ and $T(v)=v/|v|$ for $|v|\geq 1$, is the vectorial truncation map. Since $T(V)\in L^\infty(\mathbf{B})$, Theorem \ref{ellipticregconstantcoeff} (b) and Morrey's embedding, give
\begin{equation}\label{phizeroestiamtes}
	\|\nabla \Phi_0\|_{C^\beta(\mathbf B)}\lesssim_{q} \|\Phi_0\|_{W^{2,q'}(\mathbf{B})}\lesssim_{q,\B} \|T(V)\|_{L^{q'}}\leq \left(\int_\mathbf{B} E(V)\right)^{1/q'},
	\end{equation}
    for some $\beta>0$ small. We used that $q'>n$ and that $(\nabla\Phi_0)_{\mathbf{B}}=0$.
	 In particular $\nabla \Phi_0$ has a trace on $\de \mathbf B$, it follows that we can use as test map in \ref{calibration} any $\Psi\in C^\infty(\overline {\mathbf{B}},\R^m)$ and integrate by parts:
	\begin{align*}
	\int_{\mathbf{B}} T(V) \cdot \Psi\, dx &= -\int_{\mathbf{B}} \B^*\left( f''_{y_0}(0).\B \Phi_0\right)\cdot \Psi\, dx=\\	 &=\int_\mathbf{B}  f''_{y_0}(0)[\B \Phi_0,\B\Psi]\, dx +\int_{\de \mathbf{B}} \left( f''_{y_0}(0)\B\Phi_0\right)\cdot\left( \hat{\B}[x]\Psi\right)\, d\mathcal{H}^{n-1}(x).
	\end{align*} 
\textbf{Step 6. }There holds 
\begin{equation}\label{lin5}
\int_{\mathbf{B}} \min\left\{|V|,|V|^2\right\}\, dx=\int_\mathbf{B} \tilde f''_{y_0}(0)[\B \Phi_0,\B V].
\end{equation}
	The idea is to put formally $\Psi=V$, but we need some care. We set $\Psi_\eps(x):=\frac{1}{R}(v*\varrho_\eps)(x_0+Rx)$ and for $\varepsilon$ small the convolution is well-defined on $\mathbf{B}$. Now,
	\begin{itemize}
		\item[(i)] $\restricts{(h*\varrho_\eps)}{\de B}\to \tr_{\de {B}}h=\restricts{u}{\de{B}}$ strongly in $L^{p}(\de
		{B})$, because of the trace Theorem in $W^{1,p}$;
		\item[(ii)] the sequence $\| u*\varrho_\eps \|_{W^{s,p}(\de B,\R^m)}$ is bounded because of the definition of good sphere and the choice of $\varrho$;
		\item[(iii)] $\tilde u*\varrho_\eps(x)\to \tilde u(x)$ for $\sigma$-a.e. $x\in\de B$, because a good sphere is made of Lebesgue points;
		\item[(iv)] by the previous two points and the compact embedding $W^{s,p}(\de B,\R^m)\hookrightarrow L^{p^-}(\de B,\R^m)$ we get $\tilde u*\varrho_\eps\to \tilde u$ in $L^{p^-}(\de B)$,
		\item[(v)] by points $(i)$ and $(iv)$ we get $\restricts{\Psi_\eps}{\de\mathbf{B}}\to 0$ in $L^{p^-}(\de\mathbf{B})$;
		\item[(vi)] $\B \Psi_\eps\weakstar \B V$ in $C_c((U'-x_0)/R,\R^N)^*$ and $|\B V|(\de\mathbf{B})=|\B^s u|(\de B)=0$ by assumption, so $\B \Psi_\eps\weakstar \B V$ in $C(\overline{\mathbf{B}},\R^N)^*$,
		\item[(vii)] $\Psi_\eps\to V$ in $L^p(\mathbf B)$ because of general properties of convolution.
	\end{itemize}
This observation allows us to pass to the limit
$$
\int_{\mathbf{B}} T(V) \cdot \Psi_\eps\, dx=\int_\mathbf{B} f''_{y_0}(0)[\B \Phi_0,\B\Psi_\eps]\, dx +\int_{\de \mathbf{B}} \left( f''_{y_0}(0)\B\Phi_0\right)\cdot\left( \hat{\B}[\nu]\Psi_\eps\right)\, d\sigma,
$$
obtaining
\begin{align*}
\tag{by (vii)}\lim_\eps \int_{\mathbf{B}} T(V) \cdot \Psi_\eps\, dx=\int_{\mathbf{B}} T(V) \cdot V\, dx,\\
\tag{by (vi)} \lim_\eps \int_\mathbf{B}  f''_{y_0}(0)[\B \Phi_0,\B\Psi_\eps]\, dx=\int_\mathbf{B} \tilde f''_{y_0}(0)[\B \Phi_0,\B V],\\
\tag{by (v)}
\lim_\eps \int_{\de \mathbf{B}} \left( f''_{y_0}(0)\B\Phi_0\right)\cdot\left( \hat{\B}[\nu]\Psi_\eps\right)\, d\sigma=0,
\end{align*}
so we get \ref{lin5} using $T(v)\cdot v=\min\{|v|,|v|^2\}$.

\textbf{Step 7.} We put everything together and conclude
\begin{align*}\tag{by \ref{Emin}}
\int_{\mathbf{B}} E(V)\, dx &\lesssim \int_\mathbf{B} \min\left\{|V|,|V|^2\right\}\, dx\\
	\tag{by \ref{lin5}}&= \int_{\mathbf{B}} f''_{y_0}(0)[\B \Phi_0,\B V]\\
	\tag{by \ref{lin4strong}}&\lesssim_{\alpha,L} \int_{\mathbf{B}} E(\B\tilde U)|\B\Phi_0| \\
&\lesssim_{\B} \|\nabla \Phi_0\|_{L^\infty(\mathbf B)}\int_{\mathbf{B}} E(\B\tilde U)\\
	\tag{by \ref{phizeroestiamtes}}&\lesssim_{q,\B}  \left(\int_\mathbf{B} E(V)\right)^{1/q'} \int_{\mathbf{B}} E(\B\tilde U),
	\end{align*}
	dividing we get
	$$
	\int_{\mathbf{B}} E(V)\, dx\lesssim_{\alpha,q,\B,L} \left(\int_{\mathbf{B}} E(\B\tilde U)\right)^{q},
	$$ 
	changing variables back to the ball $B_R(x_0)$ we finally find \ref{lin3}.
\end{proof}
\subsection{Excess decay and iteration}

In the previous section we derived two key inequalities for a minimizer $u$ of $\F$, with the previous notations they roughly look like
\begin{align*}
\fint_{B_{R/2}} E(\B u)\stackrel{\text{Caccioppoli}}{\lesssim} &\fint_{B_R} E\left(\frac{u}{R}\right)\, dx, \\
\fint_{B_R} E\left(\frac{u-h}{R}\right)\stackrel{\text{Linearized Euler}}{\lesssim}&\left(\fint_{B_R}E(\B u)\right)^q,
\end{align*} 
where $h$ is the $\B$-affine replacement of $u$ in $B_R$ and the averages are always taken with respect to the Lebesgue measure. The idea is to link these inequalities to obtain some nonlinear estimate that can be iterated on smaller balls. As in other regularity results we wish to control a suitable  ``excess'' function: in our set-up the right definition is
\begin{equation}
\ecc(x_0,R):=\int_{B_R(x_0)} E\left(\B u- (\B u)_{B_R(x_0)}\right),
\end{equation}
where we recall that for every non-neglegible and bounded Borel set $U$ we use the notation
$$
\left(\B u\right)_{U}:=\frac{\B u(U)}{\leb^n(U)}.
$$
When the center $x_0$ is fixed we will also use the shorthand $\ecc(x_0,R)=\ecc(R)$. Then we have

\begin{proposition}[Preliminary Decay]\label{preliminarydecay}
 		 Let $u\in BV^\B(\rn)$ be a minimizer of $\overline \F$. Fix a positive threshold $\alpha>0$, any exponent $1<q<n/(n-1)$ and any ball $B_R(x_0)\Subset \Om$ such that
 	\begin{equation}\label{small}
 	\left|\left(\B u\right)_{B_R(x_0)}\right|\leq \alpha\ \ \text{ and }\ \ 
 	\left|\B u -\left(\B u\right)_{B_R(x_0)}\right|(B_R(x_0))\leq \omega_n R^n.
 	\end{equation}
 	Then there is a large constant $c_{\text{\emph {dec}}}=c_\text{\emph{dec}}(\alpha,q,\B,L,\ell)$ such that
 	\begin{equation}\label{eq:decay}
 	\ecc(\sigma R)\leq c_\text{\emph{dec}}\left(\sigma^{n+2}+\frac{1}{\sigma^2}{\left(\frac{\ecc(R)}{\omega_nR^n}\right)}^{q-1}\right)\, \ecc(R),
 	\end{equation}
 	for any $\sigma\in (0,1/10)$.
 \end{proposition}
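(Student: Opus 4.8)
The plan is to run the harmonic‑comparison (Campanato) iteration adapted to $BV^\B$: on a carefully chosen ball compare $u$ with the solution $h$ of the linearised system from \Cref{linearisation}, split the rescaled excess at level $\sigma R$ into a part measuring the distance of $h$ from a $\B$‑affine polynomial — this will yield the $\sigma^{n+2}$ term — and a part measuring $\|u-h\|$ — this will yield the $\sigma^{-2}(\ecc(R)/\omega_nR^n)^{q-1}$ term. Normalise $x_0=0$. Since $\beta$ is onto, pick $a(x)=Mx$ with $\B a\equiv y_0:=(\B u)_{B_R}$, so $\ecc(R)=\int_{B_R}E(\B u-y_0)=\int_{B_R}E(\B(u-a))$; from $E\le|\cdot|$ and the second condition in \eqref{small} we get $\ecc(R)\le\omega_nR^n$, i.e. $\ecc(R)/(\omega_nR^n)\le1$, and by \Cref{sqaureroot}, $|\B u-y_0\leb^n|(B_R)\lesssim\sqrt{\omega_nR^n\,\ecc(R)}$. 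Apply the Sobolev–Poincaré inequality \eqref{eq:poincareWsp} to $u-a$ on $B_R$: this produces $h'\in\ker(\B,B_R)$ such that, setting $\bar a:=a+h'\in\aff(\B,B_R)$ (still $\B\bar a\equiv y_0$, $|\B\bar a|\le\alpha$), one has $[\,u-\bar a\,]_{W^{1-1/p,p}(B_{R/2})}\lesssim R^{(n+1)/p-n}\sqrt{\omega_nR^n\,\ecc(R)}$. Write $\tilde u:=u-\bar a$, which lies in $W^{1-1/p,p}_{\loc}$ by \Cref{lem:embeddingsBVB}.

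Next I select a \emph{good radius}. By \Cref{lem:goodradius} applied to $\tilde u$, together with the fact that $|\B u|$ charges at most countably many spheres, fix $\rho$ in a fixed subinterval of $(2R/5,R/2)$ so that $\partial B_\rho$ is a good sphere, $|\B u|(\partial B_\rho)=0$, and (property (c)) $[\tilde u]_{W^{1-1/p,p}(\partial B_\rho)}\lesssim R^{-1/p}[\tilde u]_{W^{1-1/p,p}(B_{R/2})}\lesssim R^{n/p-n}\sqrt{\omega_nR^n\,\ecc(R)}$. Then \Cref{linearisation} on $B_\rho$ with the $\B$‑affine map $\bar a$ gives $h\in W^{1,p}(B_\rho,\R^m)$, $\tr_{\partial B_\rho}h=u$, with \eqref{lin2} in the form $\|\nabla(h-\bar a)\|_{L^p(B_\rho)}\lesssim R^{n/p-n}\sqrt{\omega_nR^n\,\ecc(R)}$ and \eqref{lin3} in the form $\int_{B_\rho}E\!\big(\tfrac{u-h}{\rho}\big)\lesssim\rho^{n(1-q)}\big(\int_{B_\rho}E(\B(u-\bar a))\big)^q\le\rho^{n(1-q)}\ecc(R)^q$. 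Since $\B\bar a\equiv y_0$ is constant, $g:=h-\bar a$ solves the \emph{homogeneous} system $\B^*(f''(y_0).\B g)=0$ on $B_\rho$, whose leading form is $\B$‑Legendre–Hadamard elliptic with constants controlled by $\alpha$ and $L/\ell$ (\Cref{auxiliary2}); hence \Cref{ellipticregconstantcoeff}(a) applied to $g$ (with $z=0$, $r\sim\rho$), Hölder's inequality, and $\rho\sim R$ give $\sup_{B_{\rho/2}}|\nabla g|+\rho\,\sup_{B_{\rho/2}}|\nabla^2 g|\lesssim\fint_{B_\rho}|\nabla g|\lesssim\big(\ecc(R)/(\omega_nR^n)\big)^{1/2}\le1$. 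Put $\ell_g(x):=g(0)+\nabla g(0)\,x$ and $\ell:=\bar a+\ell_g$; then $\ell\in\aff(\B,\R^n)$ with $|\B\ell|\le\alpha+C(\B)=:\alpha'$ (using the bound on $|\nabla g(0)|$), and $h-\ell=g-\ell_g$.

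Now the decay. Arrange the subinterval for $\rho$ so that $B_{2\sigma R}\subset B_{\rho/2}$ for every $\sigma<1/10$. Using \Cref{quasiminimality} with the constant vector $\B\ell$, then \Cref{caccioppoli} on $B_{2\sigma R}$ with the $\B$‑affine map $\ell$, then $u-\ell=(u-h)+(g-\ell_g)$ and \eqref{quasitriangular},
\[
\ecc(\sigma R)\;\le\;4\!\int_{B_{\sigma R}}\!\!E(\B(u-\ell))\;\lesssim_{\alpha'}\;\int_{B_{2\sigma R}}\!\!E\Big(\tfrac{u-h}{2\sigma R}\Big)\;+\;\int_{B_{2\sigma R}}\!\!E\Big(\tfrac{g-\ell_g}{2\sigma R}\Big).
\]
For the last integral, Taylor's theorem gives $|g-\ell_g|\le\tfrac12|x|^2\sup_{B_{\rho/2}}|\nabla^2 g|$ on $B_{2\sigma R}$, so $|g-\ell_g|/(2\sigma R)\lesssim\sigma\,(\ecc(R)/\omega_nR^n)^{1/2}\le1$ and, by \eqref{Emin}, $E(\cdot)\le|\cdot|^2$, whence $\int_{B_{2\sigma R}}E(\tfrac{g-\ell_g}{2\sigma R})\lesssim(\sigma R)^n\sigma^2\,\ecc(R)/(\omega_nR^n)\sim\sigma^{n+2}\ecc(R)$. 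For the first integral, \eqref{quasihomogeneity} gives $E(\tfrac{u-h}{2\sigma R})\le(\tfrac{\rho}{2\sigma R})^2E(\tfrac{u-h}{\rho})$, so by \eqref{lin3} and $\rho\sim R$, $\int_{B_{2\sigma R}}E(\tfrac{u-h}{2\sigma R})\le(\tfrac{\rho}{2\sigma R})^2\int_{B_\rho}E(\tfrac{u-h}{\rho})\lesssim\sigma^{-2}R^{n(1-q)}\ecc(R)^q=\sigma^{-2}\big(R^{-n}\ecc(R)\big)^{q-1}\ecc(R)\sim\sigma^{-2}\big(\ecc(R)/(\omega_nR^n)\big)^{q-1}\ecc(R)$. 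Adding the two contributions yields \eqref{eq:decay}, with $c_{\mathrm{dec}}=c_{\mathrm{dec}}(\alpha,q,\B,L,\ell)$ collecting the constants from \Cref{quasiminimality}, \Cref{caccioppoli}, \Cref{linearisation}, \Cref{ellipticregconstantcoeff} and \Cref{auxiliary2}.

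The main obstacle lies precisely where the $\B\neq\nabla$ structure bites. First, there is no $L^1$ trace theorem for $BV^\B$, so the boundary data for the linearised problem can only be handled on the measure‑zero sphere $\partial B_\rho$ via the Fubini‑type \Cref{lem:goodradius}; consequently the whole chain of radii $B_{2\sigma R}\subset B_{\rho/2}\subset B_\rho\subset B_{R/2}$, with $\partial B_\rho$ simultaneously a good sphere and uncharged by $|\B u|$, and with enough room for \eqref{eq:poincareWsp} to be applied on a ball inside $\Om$, must be set up with some care (the fixed subinterval for $\rho$ and the precise Gagliardo constant $(R'-r)^{-1/p}$ in property (c) of \Cref{lem:goodradius} are chosen for exactly this purpose). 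Second, because $\aff(\B,B_\rho)$ may contain polynomials of degree $\ge2$, one cannot simply Taylor‑expand $h$ and keep the $\B$‑gradient bounded; the remedy is to pass to $g=h-\bar a$, which solves a clean homogeneous Legendre–Hadamard system with $\nabla g$ controlled, so that the second‑order comparison polynomial $\ell=\bar a+\ell_g$ still has bounded $\B$‑gradient and is admissible in \Cref{caccioppoli}. Everything else is the routine bookkeeping of the classical argument.
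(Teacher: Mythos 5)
Your proposal is correct and follows essentially the same route as the paper's proof: a Poincaré correction in $\ker(\B,B_R)$, selection of a good sphere via the Fubini-type \Cref{lem:goodradius}, the linearised replacement of \Cref{linearisation}, a first-order Taylor polynomial of the replacement whose $\B$-gradient is bounded through the interior estimates of \Cref{ellipticregconstantcoeff}, and finally \Cref{quasiminimality} plus the Caccioppoli inequality with the combined $\B$-affine comparison map, splitting the excess into the $\sigma^{n+2}$ and $\sigma^{-2}(\ecc(R)/\omega_nR^n)^{q-1}$ contributions. The only (cosmetic) deviation is that you invoke \Cref{sqaureroot} at the start and the bound $E\le|\cdot|^2$ on small arguments where the paper instead carries $|\B\tilde u|(B_R)\le\omega_nR^n$ and applies Jensen's inequality at the end; the substance is identical.
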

\begin{proof}
	In this proof we shall denote by $C$ a generic constant depending only on $\alpha,q,L/\ell,\B,n$. In particular, we will keep track of the dependence of the constants from $R$ and $\sigma$. Since we work at a fixed center we will forget about $x_0$.

	\textbf{Step 1.} We fix any linear map $A\in \R^m\otimes \R^n$ such that $\B(Ax)=(\B u)_{B_R}$. This is possible because $(\B u)_R\in \Lambda$.
 
	\textbf{Step 2.} Choose any $a_0\in \ker(\B,B_{R})\subset C^{\infty}(B_{R})$ such that
	$$
	\big[ u-Ax-a_0\big]_{W^{1-1/p,p}(B_{R/2})}\lesssim_\B    R^{\frac{n+1}{p}-n} |\B u- (\B u)_{B_R}|\left(B_{R}\right),
	$$ 
	this is possible thanks to the local Poincaré inequality \Cref{prop:weakpoincare} applied to the function $u(x)-Ax$ in the ball $B_{R}$.
 
	\textbf{Step 3.} Using Lemma \ref{lem:goodradius} on $u-Ax-a_0$ we find a radius $r^*\in \left(\frac{4}{10}R,\frac{5}{10}R\right)$ such that $\de B_{r^*}$ is a good sphere for $u-Ax-a_0$ (and thus for $u$), $|\B u|(\de B_{r^*})=0$ and 
	\begin{align*}
	\big[\ u- Ax-a_0 \big]_{W^{1-1/p,p}(\de B_{r^*})}&\lesssim_{n}{R^{-1/p}} \big[ u- Ax-a_0 \big]_{W^{1-1/p,p}(B_{R/2})}\\
    &\lesssim_{\B} R^{-n(1-1/p)}|\B u- (\B u)_R|\left(B_{R}\right).
	\end{align*} 

Setting $\tilde u(x):=u(x)-a_0(x)-Ax\in BV^\B_{loc}(B_R)$ we proved
	$$
	[{\tilde u}]_{W^{1-1/p,p}\de B_{r^*}}\lesssim_\B R^{-n(1-1/p)} |\B \tilde u|(B_R).
	$$
 
	\textbf{Step 4.} Set $a(x):=a_0(x)+Ax\in \aff(\B,B_{R})$ and notice that \[|\B a|=|\B(a_0+Ax)|=|0+(\B u)_{B_R}|\leq \alpha.\]
    We apply now the linearization procedure (Proposition \ref{linearisation}) in the ball $B_{r*}$ using $A$ as $\B$-affine map: the choice of $r^*$ ensures assumption $(i)$, the last estimate ensures $(ii)$. Thus we find a map $ h\in C^\infty\cap W^{1,p}(B_{r*},\R^m)$ which solves the system
	\begin{equation*}
	\begin{cases}
	\B^*\left(f''(\B a).\B h\right)=0 & \text{ in }B_{r^*},\\
	\tr_{\de B_{r^*}} h=\restricts{ u}{\de B_{r^*}} & \text{ on }\de B_{r^*},
	\end{cases}
	\end{equation*}
	and satisfies
	\begin{align}
	\|\nabla\tilde h\|_{L^p(B_{r*})}\lesssim_{\alpha,\B,L}[{ \tilde u}]_{W^{1-1/p,p}(\de B_{r^*})}\lesssim_\B R^{-n(1-1/p)} |\B\tilde u|(B_{R}),\label{rstar}\\
	\fint_{B_{r^*}} E\Big(\frac{\tilde u-\tilde h}{r^*}\Big)\, dx\lesssim_{\alpha,\B,L} \Big(\fint_{B_{r^*}} E(\B\tilde u)\Big)^q=\Big(\frac{\ecc(r^*)}{\leb^n(B_{r^*})}\Big)^q,\label{rstar2}
	\end{align}
	where we set for brevity $\tilde h:=h-a$.
 
	\textbf{Step 6.} Consider the affine map $H(x):=\tilde h(x_0)+\nabla\tilde h (x_0)(x-x_0)$. Then
	$$
	|\B(a+H)(x)|\leq \alpha+C_0\ \ \ \text{ for all }x\in B_{R},
	$$
	for some constant  $C_0=C_0(\B)$. We have
	\begin{align*}
	|\B (a+ H)|&\leq |\B a|+|\B H|\lesssim_\B |(\B u)_R|+ |\nabla\tilde h(x_0)|\leq \alpha+ \sup_{x'\in B_{r^*/2}}|\nabla\tilde h(x')|,
 \end{align*}
 but then Theorem \ref{ellipticregconstantcoeff} and \eqref{rstar} give
 \begin{align*}
     \sup_{x'\in B_{r^*/2}}|\nabla\tilde h(x')|&\lesssim_{\alpha,\B,\Lambda/\lambda} \Big(\fint_{B_{r^*}	} |\nabla\tilde h |^p\, dx\Big)^{1/p}\\
     &\lesssim_{\alpha,\B,\Lambda/\lambda} R^{-n/p} [u]_{W^{1-1/p,p}(\de B_{r^*})}\lesssim_\B R^{-n} |\B\tilde u|(B_{R})\lesssim 1.
	\end{align*}
 
	\textbf{Step 7.} Fix any $\sigma\in (0,1/10)$. We will prove the decay bounding $\ecc(\sigma R)$ in terms of $\ecc(R)$, linking the Caccioppoli inequality and the harmonic approximation.
    
    Exploiting the quasi-minimality property of the mean (Lemma \ref{quasiminimality}) we get
	$$
	\ecc(\sigma R)=\int_{B_{\sigma R}}E(\B u-(\B u)_{\sigma R})\leq 4\int_{B_{\sigma R}}E(\B u-\B(a+H))=4\int_{B_{\sigma R}}E(\B(\tilde u- H)).
	$$
	Now we link this estimate with the Caccioppoli inequality (applied on the ball $B_{\sigma R}$ with map $a+H\in\aff_\B(B_R)$ and threshold $\alpha+C_0$) and the triangular inequality:
	\begin{align*}
	\ecc(\sigma R)&\leq 4\int_{B_{\sigma R}}E(\B(\tilde u- H))\lesssim_\alpha \int_{B_{2\sigma R}}E\Big(\frac{\tilde u-H}{2\sigma R}\Big)\, dx\\
	&\lesssim_\alpha \underbrace{\int_{B_{2\sigma R}}E\Big(\frac{\tilde u-\tilde h}{2\sigma R}\Big)\, dx}_{:=\bm I}+\underbrace{\int_{B_{2\sigma R}}E\Big(\frac{\tilde h-H}{2\sigma R}\Big)\, dx}_{:=\bm{II}}
	\end{align*}
	Where we used that $\tilde h$ is well defined on $B_{2\sigma R}\subset B_{r^*}$. We estimate the two integrals, let us deal with the first using Lemma \ref{lem:auxiliaryboundsE} and \eqref{rstar2}
	\begin{align*}
	\bm I &= \int_{B_{2\sigma R}}E\left(\frac{\tilde u-\tilde h}{r^*}\frac{r^*}{2\sigma R}\right)\, dx\leq \left(\frac{r^*}{2\sigma R}\right)^2\int_{B_{2\sigma R}}E\left(\frac{\tilde u-\tilde h}{r^*}\right)\, dx \\
	&\lesssim \frac{1}{\sigma^2}\int_{B_{r^*}}E\left(\frac{\tilde u-\tilde h}{r^*}\right)\, dx\lesssim_{\alpha,\B} \frac{{r^*}^n}{\sigma^2}\, {\left(\frac{\ecc(r^*)}{\leb^n(B_{r^*})}\right)}^q \lesssim\frac{\ecc(R)}{\sigma^2}\,  {\left(\frac{\ecc(R)}{R^n}\right)}^{q-1}.
	\end{align*}
	For the second term we use that $\tilde h$ behaves like an harmonic function. Using Taylor's theorem and Theorem \ref{ellipticregconstantcoeff} $(b)$ we have
 \begin{equation*}
     \sup_{B_{2\sigma R}}{|\tilde h-H|}\lesssim_{\alpha,\B} \sigma^2R^2 \sup_{B_{r^*/2}}|\nabla^2 \tilde h|\lesssim_{\alpha,\B,\Lambda/\lambda} \sigma^2 R\Big(\fint_{B_{r^*}} |\nabla\tilde h |^p\, dx\Big)^{1/p},
 \end{equation*}
 then using \eqref{rstar} as in Step 6 we find
 \begin{equation*}
     \Big(\fint_{B_{r^*}} |\nabla\tilde h |^p\, dx\Big)^{1/p}\lesssim_{\alpha,\B,\Lambda/\lambda} R^{-n} |\B\tilde u|(B_R).
 \end{equation*}
    Integrating over $B_{2\sigma R}$ we get exploiting Lemma \ref{lem:auxiliaryboundsE} and Jensen's inequality
	\begin{align*}
	\bm{II}&=\int_{B_{2\sigma R}}E\Big(\frac{\tilde h-H}{2\sigma R}\Big)\, dx\le \sigma^n R^n E\Big(C  \sigma	 \frac{|\B\tilde u|(B_R)}{\leb^n(B_R)}\Big)\\
	&\le C^2 \sigma^{n+2} R^n \left(\frac{|\B\tilde u|(B_R)}{\leb^n(B_R)}\right)^2
	\lesssim \sigma^{n+2} R^n E\Big(\frac{|\B\tilde u|(B_R)}{\leb^n(B_R)}\Big)\\
	&\leq \sigma^{n+2} R^n \fint_{B_R}E(\B\tilde u)\lesssim\sigma^{n+2} \ecc(R).
	\end{align*}
	Combining these two estimates we have \eqref{eq:decay}.
\end{proof}
The next step is to iterate this decay to prove that there is a ``critical threshold $\eps_0$'' such that, if $\ecc(x_0,R)\leq\eps_0 R^n$, then $\ecc(x_0,r)\lesssim r^{n+\gamma}$ when $	r\to 0^+$. Since this estimate will hold also for $x$ near $x_0$ too, we will be able to employ Campanato's integral characterization of H\"older continuity. We prove a decay of the normalized excess of a ball $B_R\Subset \Om$:
$$
\necc(x_0,R):=\frac{\ecc(x_0,R)}{\leb^n(B_R(x_0))}=\fint_{B_R(x_0)}E\left(\B u -(\B u)_{B_R(x_0)}\right).
$$
\begin{proposition}[Excess decay]\label{excessdecay}
	Let $u\in BV^\B(\rn)$ be a minimizer of $\overline \F$ and $\gamma\in (0,1)$ a fixed exponent. Then for every $\alpha>0$ there exists a critical threshold $\eps_{\text{\emph{crit}}}=\eps_{\text{\emph{crit}}}(\alpha,\gamma,\B,L,\ell)>0$ such that the following implication holds: if
	\begin{equation}
	 B_R(x_0)\Subset \Om,\quad	\left|(\B u)_{B_R(x_0)}\right|\leq \alpha ,\quad \necc(x_0,R)\leq \eps_{\text{crit}},
	\end{equation}
	then
	\begin{equation}\label{decay}
	 \necc(x_0,r)\lesssim_{{\alpha,\gamma,\B,L,\ell}} \left(\frac{r}{R}\right)^{2\gamma} \necc(x_0,R)\quad \text{ for every }r\in(0,R).
	\end{equation}
\end{proposition}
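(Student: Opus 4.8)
The plan is to iterate the preliminary decay estimate \eqref{eq:decay} along the geometric sequence of radii $\sigma^k R$ for a fixed small ratio $\sigma\in(0,1/10)$, while keeping at the same time the averages $(\B u)_{B_{\sigma^k R}(x_0)}$ inside a fixed enlarged ball of $\R^N$, so that \Cref{preliminarydecay} may be reapplied at every scale.

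\textbf{Rescaling and choice of constants.} Dividing \eqref{eq:decay} by $\omega_n(\sigma R)^n$ rewrites it in terms of the normalised excess as
\begin{equation*}
\necc(x_0,\sigma R)\le c_{\text{dec}}\Big(\sigma^{2}+\sigma^{-(n+2)}\necc(x_0,R)^{q-1}\Big)\,\necc(x_0,R),
\end{equation*}
valid whenever $|(\B u)_{B_R(x_0)}|\le\alpha'$ and $\fint_{B_R(x_0)}|\B u-(\B u)_{B_R(x_0)}|\le 1$; by \Cref{sqaureroot} the second condition is implied by $\necc(x_0,R)\le\tfrac13$. I would fix once and for all $q=q(n)\in(1,n/(n-1))$ and the working threshold $\alpha':=\alpha+1$, so that $c_{\text{dec}}=c_{\text{dec}}(\alpha+1,q,\B,L,\ell)$ becomes a fixed constant. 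Since $\gamma<1$ one may then choose $\sigma=\sigma(\alpha,\gamma,\B,L,\ell)\in(0,1/10)$ with $c_{\text{dec}}\,\sigma^{2-2\gamma}\le\tfrac12$, and finally $\eps_{\text{crit}}=\eps_{\text{crit}}(\alpha,\gamma,\B,L,\ell)>0$ so small that $\eps_{\text{crit}}\le\tfrac13$, $c_{\text{dec}}\,\sigma^{-(n+2)}\eps_{\text{crit}}^{q-1}\le\tfrac12\sigma^{2\gamma}$ and $\sqrt{3}\,\sigma^{-n}(1-\sigma^{\gamma})^{-1}\sqrt{\eps_{\text{crit}}}\le 1$. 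It matters that $\alpha'=\alpha+1$ is frozen \emph{before} $c_{\text{dec}},\sigma,\eps_{\text{crit}}$, otherwise the dependence of the constants would be circular.

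\textbf{Simultaneous induction.} Under the hypotheses of the proposition I would prove by induction on $k\ge0$ that
\begin{equation*}
|(\B u)_{B_{\sigma^{j}R}(x_0)}|\le\alpha+1\quad\text{and}\quad\necc(x_0,\sigma^{j}R)\le\sigma^{2\gamma j}\,\necc(x_0,R)\qquad(0\le j\le k).
\end{equation*}
The case $k=0$ is the assumption. For the inductive step, the average bound at scale $\sigma^{k}R$ together with $\necc(x_0,\sigma^{k}R)\le\necc(x_0,R)\le\eps_{\text{crit}}\le\tfrac13$ allow applying the rescaled decay on $B_{\sigma^{k}R}(x_0)$ with threshold $\alpha+1$; the choices of $\sigma$ and $\eps_{\text{crit}}$ collapse the bracket and give $\necc(x_0,\sigma^{k+1}R)\le\sigma^{2\gamma}\necc(x_0,\sigma^{k}R)\le\sigma^{2\gamma(k+1)}\necc(x_0,R)$. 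For the average, a telescoping sum and the estimate
\begin{equation*}
|(\B u)_{B_{\sigma^{j+1}R}(x_0)}-(\B u)_{B_{\sigma^{j}R}(x_0)}|\le\sigma^{-n}\fint_{B_{\sigma^{j}R}(x_0)}|\B u-(\B u)_{B_{\sigma^{j}R}(x_0)}|\le\sqrt{3}\,\sigma^{-n}\sigma^{\gamma j}\sqrt{\necc(x_0,R)}
\end{equation*}
(valid for $j\le k$ by \Cref{sqaureroot} and the decay already obtained) keep, after summing the geometric series and using the last smallness condition on $\eps_{\text{crit}}$, $|(\B u)_{B_{\sigma^{k+1}R}(x_0)}|\le\alpha+1$.

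\textbf{From dyadic radii to all radii, and the main obstacle.} Given $r\in(0,R)$ I would pick $k\ge0$ with $\sigma^{k+1}R\le r<\sigma^{k}R$; since $B_r(x_0)\subset B_{\sigma^{k}R}(x_0)$ and $E\ge0$, \Cref{quasiminimality} yields $\ecc(x_0,r)\le4\,\ecc(x_0,\sigma^{k}R)$, hence
\begin{equation*}
\necc(x_0,r)\le4\sigma^{-n}\necc(x_0,\sigma^{k}R)\le4\sigma^{-n}\sigma^{2\gamma k}\necc(x_0,R)\le4\sigma^{-n-2\gamma}\Big(\frac{r}{R}\Big)^{2\gamma}\necc(x_0,R),
\end{equation*}
which is \eqref{decay} with $C=4\sigma^{-n-2\gamma}$. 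The genuinely delicate point is the \emph{simultaneous} character of the induction: the decay of the excess at scale $\sigma^{k+1}R$ is available only if the average is under control at scale $\sigma^{k}R$, while the average stays controlled only because the excesses at all smaller-index scales already decay geometrically. This loop closes because the drift of the averages is governed by $\sqrt{\necc}$ rather than by $\necc$ itself (\Cref{sqaureroot}) and because $\sum_{j}\sigma^{\gamma j}<\infty$ for $\gamma>0$; the hypothesis $\gamma<1$ is used only through the choice $c_{\text{dec}}\,\sigma^{2-2\gamma}\le\tfrac12$.
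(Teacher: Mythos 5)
Your proposal is correct and follows essentially the same route as the paper: fix $q$ and the enlarged threshold $\alpha+1$ first, choose $\sigma$ and $\eps_{\text{crit}}$ so that the bracket in the preliminary decay collapses to $\sigma^{2\gamma}$, run the simultaneous induction on the dyadic scales controlling the excess and the drift of the averages via \Cref{sqaureroot} and a convergent geometric series, and then pass to arbitrary radii. The only difference is cosmetic: you carry out the normalisation of \eqref{eq:decay} and the final dyadic-to-all-radii interpolation explicitly (via \Cref{quasiminimality}), steps the paper compresses into its choice of $\sigma_0,\eps_0$ and the phrase ``discrete interpolation''.
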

\begin{proof}
	Fix $u,q,\alpha$ and $\gamma$ as in the hypothesis. Let us denote with 
	$$C:=c_\text{decay}(\alpha+1,q:=1+1/n,\B,L,\ell)$$ 
	the constant given by Proposition \ref{preliminarydecay} relative to $\alpha+1$, this ensures
	\begin{equation*}
	\begin{cases}
	 B_R(x_0)\Subset \Om,	\left|(\B u)_{B_R(x_0)}\right|\leq \alpha +1, \\
	 \fint_{B_R(x_0)}\left|\B u -(\B u)_{B_R(x_0)}\right|\leq 1
	\end{cases}\Rightarrow 	\necc(x_0,\sigma R)\leq C\, \left(\sigma^{n+2}+ \sigma^{-2}\necc(x_0,R)^{1/n}\right)\necc(x_0,R),
	\end{equation*}
	for every $ 0<\sigma<1/10$. \\
	Step 1. We now fix two values $\sigma_0$ and $\eps_0$, both depending only on $\{\alpha+1,q,\gamma,\B,L,\ell\}$ such that the following holds:
	\begin{equation}\label{firstepdecay}
	\begin{cases}
	B_R(x_0)\Subset \Om,	\left|(\B u)_{B_R(x_0)}\right|\leq \alpha +1,\\
	\necc(x_0,R)\leq \eps_0
	\end{cases}\Rightarrow 	\necc(x_0,\sigma_0 R)\leq  \sigma_0^{2\gamma}\necc(x_0,R).
	\end{equation}
	The choices are
	$$
	\sigma_0:=\min\left\{\frac{1}{20},3C^{-\frac{1}{n-2(1-\gamma)}}\right\},\quad \eps_0:=\min\left\{\frac{1}{3},\left(\frac{\sigma_0^{2(1+\gamma)}}{3C}\right)^{n}\right\},
	$$
	in order to reduce \ref{firstepdecay} to Proposition \ref{preliminarydecay} just use Lemma \ref{sqaureroot}:
	$$
	\necc(x_0,R)\leq \eps_0\leq1/3\Rightarrow \fint_{B_R(x_0)}\left|\B u - (\B u)_{B_R(x_0)}\right|\leq\sqrt{3\necc(x_0,R)}\leq 1.
	$$
	We remark that $\eps_{\text{crit}}$ is yet to be chosen and differs from $\eps_0$.\\  
	Step 2. We inspect how the hypothesis of \ref{firstepdecay} behaves when passing from $B_R$ to $B_{\sigma_0R}$, that is to say we notice that 
	\begin{equation}
	\begin{cases}
	B_R(x_0)\Subset \Om, \necc(x_0,R)\leq \eps_0, \\
		\left|(\B u)_{B_R(x_0)}\right|\leq \alpha<\alpha +1
	\end{cases}
	\Rightarrow 	
	\begin{cases}
	B_{\sigma_0 R}(x_0)\Subset \Om,	\necc(x_0,\sigma_0R)\leq \sigma_0^{2\gamma}\necc(x_0,R)\leq \eps_0, \\
	\left|(\B u)_{B_{\sigma_0 R}(x_0)}\right|\leq \alpha +\sigma^{-n}_0\sqrt{3\necc(x_0,R)},
	\end{cases}
	\end{equation}
	the only nontrivial verification being
	\begin{align*}
		\left|(\B u)_{B_{\sigma_0 R}(x_0)}\right|&\leq \left|(\B u)_{B_R(x_0)}\right|+\left|(\B u)_{B_{\sigma_0R}(x_0)}-(\B u)_{B_R(x_0)}\right|\tag{triangular}\\
		&\leq \alpha + \sigma^{-n}_0\fint_{B_R(x_0)}\left|\B u - (\B u)_{B_R(x_0)}\right|\tag{monotonicity}\\
		&\leq \alpha +\sigma^{-n}_0\sqrt{3\necc(x_0,R)}\tag{ by Lemma \ref{sqaureroot}}.
	\end{align*}
	In order to apply again \ref{firstepdecay} on $B_{\sigma_0R}(x_0)$ we need an additional smallness condition on the initial excess 
\begin{equation}\label{secondstepdecay}
 \sigma^{-n}_0\sqrt{3\necc(x_0,R)}\leq 1.
\end{equation}
This is of course possible replacing $\eps_0$ with some smaller $\eps_1\leq\eps_0$. Nevertheless we want to iterate this argument infinite times so we must ensure that $\eps_k$ does not go down to $0$. Luckily this can be done, we think it is more clear to provide the next step of the iteration instead of a formal induction.\\
Step 3. Suppose that \ref{secondstepdecay} is true, then we apply \ref{firstepdecay} on $B_{\sigma_0R}(x_0)$ and get
	\begin{align*} 	
&\begin{cases}
B_{\sigma_0 R}(x_0)\Subset \Om,	\necc(x_0,\sigma_0R)\leq \sigma_0^{2\gamma}\necc(x_0,R)\leq \eps_0, \\
\left|(\B u)_{B_{\sigma_0 R}(x_0)}\right|\leq \alpha +\sigma^{-n}_0\sqrt{3\necc(x_0,R)}\leq \alpha + 1,
\end{cases}\\
\Rightarrow
&\begin{cases}
B_{\sigma_0^2R}(x_0)\Subset \Om, \necc(x_0,\sigma_0^2 R)\leq \sigma_0^{2\gamma}\necc(x_0,\sigma_0 R), \\
\left|(\B u)_{B_{\sigma_0^2 R}(x_0)}\right|\leq \alpha +\sigma^{-n}_0\sqrt{3\necc(x_0,R)}+\sigma^{-n}_0\sqrt{3\necc(x_0,\sigma_0 R)},
\end{cases}\\
\Rightarrow
&\begin{cases}
B_{\sigma_0^2R}(x_0)\Subset \Om, \necc(x_0,\sigma_0^2 R)\leq \sigma_0^{4\gamma}\necc(x_0,R), \\
\left|(\B u)_{B_{\sigma_0^2 R}(x_0)}\right|\leq \alpha +(1+\sigma_0^\gamma)\sigma^{-n}_0\sqrt{3\necc(x_0,R)},
\end{cases}
\end{align*}
In order to apply \ref{firstepdecay} on $B_{\sigma_0^2R}(x_0)$ we need a further smallness assumption on the excess
\begin{equation*}\label{thirdstepdecay}
(1+\sigma_0^\gamma)\sigma^{-n}_0\sqrt{3\necc(x_0,R)}\leq 1,
\end{equation*}
this requirement is slightly stronger than \ref{secondstepdecay}. Going on in this fashion one easily devise the pattern: the condition
\begin{equation*}\label{laststepdecay}
\left(\sum_{k\in \N} \sigma_0^{k\gamma}\right)\sigma^{-n}_0\sqrt{3\necc(x_0,R)}\leq 1,
\end{equation*}
is enough for infinitely many steps and, since the series converges, we can set
$$
\eps_{\text{crit}}:=\min\left\{\eps_0;\frac{1}{3}\sigma_0^{2n}\left(\sum_{k\in \N} \sigma_0^{k\gamma}\right)^{-2}\right\}.
$$  
With this choice we have
\begin{equation}\label{discretedecay}
\begin{cases}
B_R(x_0)\Subset \Om,	\left|(\B u)_{B_R(x_0)}\right|\leq \alpha,\\
\necc(x_0,R)\leq \eps_{\text{crit}}
\end{cases}\Rightarrow 	\necc(x_0,\sigma_0^k R)\leq  \sigma_0^{2\gamma k}\necc(x_0,R),\quad k\in\N,
\end{equation}
and this gives the H\"older estimate \ref{decay} by discrete interpolation.
\end{proof}
We can finally prove
\begin{theorem}[Main Theorem]\label{maintheorem}
	Let $u\in BV^\B(\rn)$ be a minimizer of $\overline \F$ and $\gamma\in (0,1)$ some fixed exponent. Then for every $\alpha>0$ there exists a critical threshold $\eps=\eps(\alpha,\gamma,\B,L,\ell)>0$ such that the following implication holds: if
	\begin{equation}
	B_R(x_0)\Subset \Om,\quad	\left|(\B u)_{B_R(x_0)}\right|\leq \alpha ,\quad \necc(x_0,R)\leq \eps,
	\end{equation}
	then $\B u\mres B_{R/2}(x_0) \ll \leb^n$, $\B u \in C^{0,\gamma}(B_{R/2}(x_0))$ and  
	\begin{equation}\label{holderreg}
\left[\B u \right]_{C^{0,\gamma}(B_{R/2}(x_0))}\lesssim_{\alpha,\B,L,\ell} R^{-\gamma} \sqrt{\necc(x_0,R)}.
	\end{equation}
\end{theorem}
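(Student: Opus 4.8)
The plan is to run the excess-decay estimate \Cref{excessdecay} not only at $x_0$ but at every point of a neighbourhood of $x_0$, turn the resulting decay of $\necc$ into a Campanato-type decay of the $L^1$ oscillation of $\B u$, and conclude by Campanato's integral characterisation of Hölder continuity (which, as a by-product, also yields $\B u\mres B_{R/2}(x_0)\ll\leb^n$).

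\emph{Choosing the threshold and propagating the hypotheses.} Let $\eps_{\text{crit}}=\eps_{\text{crit}}(\alpha+1,\gamma,\B,L,\ell)$ be the constant furnished by \Cref{excessdecay} for the enlarged parameter $\alpha+1$, and set $\eps:=c_0\min\{\eps_{\text{crit}},1\}$ for a small dimensional $c_0$ to be fixed. Assume $|(\B u)_{B_R(x_0)}|\le\alpha$ and $\necc(x_0,R)\le\eps$, and take any $x\in B_{R/2}(x_0)$, so that $B_{R/2}(x)\subset B_R(x_0)\Subset\Om$. By \Cref{quasiminimality} (choosing the competitor $y=(\B u)_{B_R(x_0)}$) and monotonicity of $\mu\mapsto\int E(\mu)$,
\[
\ecc(x,R/2)\le 4\,\ecc(x_0,R),\qquad\text{hence}\qquad \necc(x,R/2)\le 2^{n+2}\,\necc(x_0,R).
\]
Moreover, writing $(\B u)_{B_{R/2}(x)}-(\B u)_{B_R(x_0)}=\frac{1}{\leb^n(B_{R/2}(x))}\big(\B u-(\B u)_{B_R(x_0)}\leb^n\big)(B_{R/2}(x))$ and using \Cref{sqaureroot} on $B_R(x_0)$ gives $|(\B u)_{B_{R/2}(x)}|\le\alpha+2^n\sqrt{3\,\necc(x_0,R)}$. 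Choosing $c_0$ so that $2^n\sqrt{3\eps}\le1$ and $2^{n+2}\eps\le\eps_{\text{crit}}$, the hypotheses of \Cref{excessdecay} hold at $(x,R/2)$ for the parameter $\alpha+1$, for every $x\in B_{R/2}(x_0)$.

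\emph{Campanato estimate and absolute continuity.} Applying \Cref{excessdecay} at each such $x$ gives, for all $r\in(0,R/2)$,
\[
\necc(x,r)\ \lesssim_{\alpha,\gamma,\B,L,\ell}\ \Big(\tfrac{r}{R}\Big)^{2\gamma}\necc(x,R/2)\ \lesssim\ \Big(\tfrac{r}{R}\Big)^{2\gamma}\necc(x_0,R)\ \le\ 1,
\]
so \Cref{sqaureroot} applies on every $B_r(x)$ and yields
\[
\frac{1}{\leb^n(B_r(x))}\int_{B_r(x)}\big|\B u-(\B u)_{B_r(x)}\big|\ \le\ \sqrt{3\,\necc(x,r)}\ \lesssim\ \Big(\tfrac{r}{R}\Big)^{\gamma}\sqrt{\necc(x_0,R)}
\]
for all $x\in B_{R/2}(x_0)$, $r\in(0,R/2)$. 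Consecutive dyadic vector means then satisfy $|(\B u)_{B_{\rho/2}(x)}-(\B u)_{B_\rho(x)}|\lesssim(\rho/R)^{\gamma}\sqrt{\necc(x_0,R)}$, a summable geometric series, so $(\B u)_{B_r(x)}$ converges as $r\downarrow0$ and stays within $C\sqrt{\necc(x_0,R)}$ of $(\B u)_{B_{R/2}(x)}$; in particular these means are bounded by $\alpha+C$ on $B_{R/2}(x_0)$. Combining this with the oscillation bound,
\[
|\B u|(B_r(x))\le|(\B u)_{B_r(x)}|\,\leb^n(B_r(x))+\int_{B_r(x)}\big|\B u-(\B u)_{B_r(x)}\big|\ \lesssim\ \leb^n(B_r(x))
\]
for all $B_r(x)\subset B_{R/2}(x_0)$; by the usual Besicovitch covering/differentiation argument this forces $\B u\mres B_{R/2}(x_0)\ll\leb^n$, so $\B u$ is represented on $B_{R/2}(x_0)$ by an $L^1_\loc$ function obeying the displayed Campanato growth.

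\emph{Conclusion.} Campanato's theorem (the identification $\mathcal L^{1,n+\gamma}\simeq C^{0,\gamma}$, applied inside $B_{R/2}(x_0)$) upgrades this to $\B u\in C^{0,\gamma}(B_{R/2}(x_0))$ with
\[
[\B u]_{C^{0,\gamma}(B_{R/2}(x_0))}\ \lesssim_{\alpha,\B,L,\ell}\ \sup_{x,\,r}\ r^{-\gamma}\frac{1}{\leb^n(B_r(x))}\int_{B_r(x)}\big|\B u-(\B u)_{B_r(x)}\big|\ \lesssim\ R^{-\gamma}\sqrt{\necc(x_0,R)},
\]
which is \eqref{holderreg}; to reach centres on $\de B_{R/2}(x_0)$ one reruns the argument from $x_0$ with $R$ replaced by $\tfrac34R$, producing the decay for $x\in B_{5R/8}(x_0)$ and covering the remaining balls. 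I expect the only genuinely non-routine point to be the \emph{propagation of the two smallness conditions} — the bound on $|(\B u)_{B_\rho(x)}|$ and on $\necc(x,\rho)$ — from $x_0$ to neighbouring centres, which is exactly where \Cref{quasiminimality} and \Cref{sqaureroot} enter and why $\eps$ must be a fixed dimensional fraction of $\eps_{\text{crit}}$; deducing absolute continuity from $|\B u|(B_r(x))\lesssim\leb^n(B_r(x))$ and the final Campanato step are standard.
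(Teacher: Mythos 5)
Your proposal is correct and follows essentially the same route as the paper: the same choice of $\eps$ as a fixed fraction of $\eps_{\text{crit}}(\alpha+1,\gamma,\dots)$, the same propagation of the two smallness conditions to nearby centres via \Cref{quasiminimality} and \Cref{sqaureroot}, the uniform application of \Cref{excessdecay} at every $x\in B_{R/2}(x_0)$, and the conclusion through the Campanato characterization of H\"older continuity. The only (immaterial) difference is the absolute-continuity step: the paper observes that $\necc(x,r)$ dominates the normalized singular mass, so $|\B^s u|$ has vanishing upper density and one invokes the standard density theorem, whereas you derive a uniform bound $|\B u|(B_r(x))\lesssim\leb^n(B_r(x))$ from the telescoped means and conclude by differentiation of measures — both are routine and valid.
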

\begin{proof}
	We start with two simple estimates that relates the oscillation of nested balls, whose centers do not necessarily agree. Given $B_R(x_0)\Subset \Om$ there holds
	\begin{align*}
		&\necc(x,R/2)\leq 2^{n+2}\, \necc(x_0,R)&\text{ for every }x\in B_{R/2}(x_0),\\
		&\left|(\B u)_{B_{R/2}(x)}\right|\leq \left|(\B u)_{B_R(x_0)}\right|+2^n \sqrt{3\necc(x_0,R)}&\text{ for every }x\in B_{R/2}(x_0).
	\end{align*} 
	These are simply proven by Lemmas \ref{quasiminimality}, \ref{sqaureroot} and the quasi-triangular inequality. So defining $\eps$ as 
	\begin{equation}
		\eps:=\min\left\{\frac{1}{3\cdot 2^n};\frac{\eps_{\text{crit}}(\alpha+1,\gamma,\B,L,\ell)}{2^{n+2}}\right\},
	\end{equation} 
	these two estimates gives us
	\begin{equation*}
	\begin{cases}
	B_R(x_0)\Subset \Om, \necc(x_0,R)\leq \eps, \\
	\left|(\B u)_{B_R(x_0)}\right|\leq \alpha
	\end{cases}
	\Rightarrow 	
	\begin{cases}
	B_{R/2}(x)\Subset \Om,	\necc(x,R/2)\leq \eps_{\text{crit}}(\alpha+1,\gamma,\B,L,\ell), \\
	\left|(\B u)_{B_{R/2}(x)}\right|\leq \alpha +1,
	\end{cases}
	\end{equation*}
	so that we can apply Proposition \ref{excessdecay} on the ball $B_{R/2}(x)$ with exponent $\gamma$ and threshold $\alpha+1$ and find:
	\begin{equation*}
		\necc(x,r)\lesssim_{{\alpha,\gamma,\B,L,\ell}} \left(\frac{r}{R}\right)^{2\gamma}\, \necc(x_0,R)\quad \text{ for every }r\in(0,R/2),
	\end{equation*} 
	but the constant involved does not depend on the center $x\in B_{R/2}(x_0)$ so there holds
	\begin{equation*}
	\necc(x,r)\lesssim_{{\alpha,\gamma,\B,L,\ell}} \left(\frac{r}{R}\right)^{2\gamma}\, \necc(x_0,R)\quad \text{ for every }x\in B_{R/2}(x_0),r\in(0,R/2).
	\end{equation*} 
	This in particular gives us that for every $x\in B_{R/2}(x_0)$ we have
	\begin{equation*}
		\limsup_{r\to 0^+} \frac{|\B^s u|(\overline {B}_r(x))}{r^n}\lesssim \limsup_{r\to 0^+}  \left(\frac{r}{R}\right)^{2\gamma}\, \necc(x_0,R)=0,
	\end{equation*}
	so by standard results about upper densities of measures (see Theorem 2.56 in \cite{ambrosio2000functions}) we get $\B u\ll \leb^n\mres B_{R/2}(x_0)$, so we can write $\B u=\B u(x)\in L^1(B_{R/2}(x_0),\R^N)$. We now show that $\B u$ belongs to the Campanato space $\mathcal{L}^{n+\gamma}(B_{R/2}(x_0))$, in fact by Lemma \ref{sqaureroot} we have
	\begin{align*}
		\left(\fint_{B_r(x)}\left|\B u(x')-(\B u)_{B_r(x)}\right|\, dx'\right)^{2}&\leq \necc(x,r)^2+2\necc(x,r)\\
		&\lesssim_{{\alpha,\gamma,\B,L,\ell}} \left(\frac{r}{R}\right)^{4\gamma}\necc(x_0,R)^2+\left(\frac{r}{R}\right)^{2\gamma}\necc(x_0,R)\\
		&\lesssim \left(\frac{r}{R}\right)^{2\gamma}\necc(x_0,R).
	\end{align*}
	We conclude that
	$$
	\sup_{x\in B_{R/2}(x_0),0<r<R/2}r^{-n-\gamma}\int_{B_r(x)}\left|\B u(x')-(\B u)_{B_r(x)}\right|\, dx'\lesssim_{{\alpha,\gamma,\B,L,\ell}}\frac{\sqrt{\necc(x_0,R)}}{R^\gamma},
	$$
	so by the integral characterization of H\"older continuity we get \ref{holderreg}:
	$$
	\left[\B u \right]_{C^{0,\gamma}(B_{R/2}(x_0))}\sim_{n,\gamma} \left[\B u \right]_{\mathcal{L}^{1,n+\gamma}(B_{R/2}(x_0))} \lesssim_{\alpha,\B,L,\ell} R^{-\gamma} \sqrt{\necc(x_0,R)}.
	$$
\end{proof}
This is the key result, using the boundedness of Calderón-Zygmund kernels between H\"older spaces one easily gets that $u\in C^{1,\gamma}(B_{R/4}(x_0),\R^m)$ for every $\gamma\in (0,1)$. Then exploiting the finite-difference method one can prove full $C^{2,\gamma}$ regularity, we do not enter in too much detail here, a quick account is given in Theorem 4.9 in \cite{Kristensen18}.

\printbibliography[heading=bibintoc]

\end{document}